\def\Ha{{\mathcal H}}
\def\e{{\epsilon}}
\def\ra{{\rightarrow}}
\numberwithin{equation}{section}
\newtheorem{theorem}{Theorem}[section]
\newtheorem{lemma}[theorem]{Lemma}
\newtheorem{proposition}[theorem]{Proposition}
\newtheorem{rem}[theorem]{Remark}
\newtheorem{definition}[theorem]{Definition}
\def\tr{\mbox{tr}}
\renewcommand{\tilde}{\widetilde}          
\DeclareMathSymbol{\leqslant}{\mathalpha}{AMSa}{"36} 
\DeclareMathSymbol{\geqslant}{\mathalpha}{AMSa}{"3E} 
\DeclareMathSymbol{\eset}{\mathalpha}{AMSb}{"3F}     
\renewcommand{\leq}{\;\leqslant\;}                   
\renewcommand{\geq}{\;\geqslant\;}                   
\def \C{ \mathbb  C }
\newcommand{\R}{\mathbb{R}}
\newcommand{\N}{\mathbb{N}}
\def \P{ \mathbb P  }
\def \bP{ \mathbb P  }
\def \bE{ \mathbb E  }
\def \E{ \mathbb E  }
\newcommand \be  {\begin{equation*}}
\newcommand \bea {\begin{eqnarray} \nonumber }
\newcommand \ee  {\end{equation*}}
\newcommand \ba  {\begin{align}}
\newcommand \ea  {\end{align}}
\definecolor{remi}{rgb}{0,0,0}
\title{A diffusive matrix model for invariant $\beta$-ensembles}
\author[1]{Romain Allez}
\author[2]{Alice Guionnet}
\affil[1]{ Universit{\'e} Paris-Dauphine, Ceremade, 75\,016 Paris, France.}
\affil[2]{U.M.P.A.
ENS de Lyon 
46, all\'ee d'Italie, 
69364 Lyon Cedex 07 - France.}
\begin{document}
\maketitle
\begin{abstract}
We define a new diffusive matrix model converging towards the $\beta$-Dyson Brownian motion for all $\beta\in [0,2]$
that provides an explicit construction of  $\beta$-ensembles of random matrices that is invariant under the orthogonal/unitary group. 
We also describe the eigenvector dynamics of the limiting matrix process; we show  that when $\beta< 1$ and that two eigenvalues collide, the eigenvectors of these two colliding eigenvalues
 fluctuate very fast and take the uniform measure on the  orthocomplement of the eigenvectors of the remaining eigenvalues. 
\end{abstract}

\tableofcontents

\vspace{1cm}
\footnotesize

\normalsize
\section{Introduction}\label{intro}
It is well known that the law of the eigenvalues of the classical Gaussian matrix ensembles are given by a Gibbs measure of a Coulomb gas interaction with inverse temperature  $\beta=1$  (resp. 2, resp. 4) in the symmetric (resp. Hermitian, resp. symplectic) cases;
$$dP_\beta(\lambda)=\frac{1}{Z^\beta}\prod_{i<j}|\lambda_i-\lambda_j|^\beta e^{-\frac{1}{2}\sum \lambda_i^2}  \prod d\lambda_i\,.$$
 Such measures are associated with symmetric Langevin dynamics, the so-called Dyson Brownian motion, which describe  the random motion of the eigenvalues of a symmetric (resp. Hermitian, resp. symplectic) Brownian motion. They are given by the stochastic differential system
\begin{equation}
d\lambda_i(t)=db_i(t)-\lambda_i(t) dt +\beta\sum_{j\neq i}\frac{1}{\lambda_i(t)-\lambda_j(t)} dt
\end{equation}
with iid Brownian motions $(b_i)$.  These laws and dynamics have been intensively studied, and both local and global behaviours of these eigenvalues have been analyzed precisely, starting from the reference book of Mehta \cite{Mehta}.

More recently, the generalization of these distributions and dynamics to all $\beta\ge 0$, the so-called $\beta$-ensembles, was considered.  
 As for $\beta=1,2, 4$,  the Langevin  dynamics converge to their unique invariant Gibbs measure $P_\beta$
as times goes to infinity. Indeed, the  stochastic differential system under study is a set of Brownian motions in interaction according to a strictly convex potential. Thus, one can then show by a standard coupling argument that two solutions driven by the same Brownian motion but with different initial data will soon be very close to each others. This entails the uniqueness of the invariant measure as well as the convergence to this Gibbs measure.
It turns out   that the case $\beta\in [0,1)$ and the case $\beta\in [1,\infty)$ are quite different, as in the first case the eigenvalues process can cross whereas in the second the repulsion is strong enough so that the eigenvalues do not collide with probability one in finite time. However, the diffusion was shown to be well defined, even for $\beta<1$, by C\'epa and L\'epingle \cite{Cepa}, at list once reordered.

The goal of this article is to provide 
 a natural interpretation of $\beta$-ensembles in terms of random matrices
for $\beta\in [0,2]$.
Dumitriu and Edelman \cite{Dumitriu} already proposed a tridiagonal matrix 
with eigenvalues distributed according to the $\beta$-ensembles. However, this tridiagonal matrix lacks the invariant property of the classical ensembles. Our construction has this property and moreover is constructive as it is based on a dynamical scheme. It was proposed by JP Bouchaud, and this article provides rigorous proofs of the results stated in \cite{PRL}.  The idea is to interpolate between the Dyson Brownian motion and the standard Brownian motion by throwing a coin at every infinitesimal time step to decide whether our matrix will evolve according to a Hermitian Brownian motion (with probability $p$) or will keep the same eigenvectors but has eigenvalues diffusing according to a Brownian motion. When the size of the infinitesimal time steps goes to zero, we will prove that the dynamics of the eigenvalues of this matrix valued process converges towards the $\beta$-Dyson Brownian motion with $\beta=2p$. The same construction with a symmetric Brownian motion leads to the same limit with
 $\beta=p$. This result is more precisely stated in Theorem \ref{main}.  We shall not consider the extension to the symplectic Brownian motion in this paper, but  it is clear that the same result  holds
with  $\beta=4p$. Our construction can be extended to other matrix models such as Wishart matrices, Circular and Ginibre Gaussian Ensembles and will lead to similar results. 

We thus deduce from our construction that 
$\beta$-ensembles can be interpreted as an interpolation between free convolution (obtained by adding a Hermitian Brownian motion) and standard convolution (arising when the eigenvalues evolve following standard Brownian motions). It is natural to wonder whether a notion of $\beta$-convolution could be more generally defined.

Moreover we shall study the eigenvectors of our matrix-valued process. In the case where $\beta\ge 1$, their dynamics is well known and is similar to the dynamics of the eigenvectors of the Hermitian or Symplectic Brownian motions, see e.g. \cite{AGZ}.
When $\beta<1$ the question is to determine what happens at a collision.
It turns out that when we approach a collision, the eigenvectors of the non-colliding eigenvalues converge to some orthogonal family $B$ of $d-2$ vectors  whereas the eigenvectors of the colliding eigenvalues oscillate very fast and take the uniform distribution on the ortho-complement of $B$, see Proposition \ref{limit_phi_T_1}.

\section{Statement of the results}
Let $\Ha_d^\beta$ be the space of $d\times d$ symmetric (respectively
Hermitian) matrices if $\beta=1$ (resp. $\beta=2$) and $\mathcal{O}^\beta_d$ be the space of $d\times d$ orthogonal (respectively unitary) matrices if $\beta=1$ 
(resp.  $\beta=2$).

We consider the matrix-valued process defined as follows.
Let $\gamma$ be a positive real number 
and $M_0^\beta \in \Ha_d^\beta$ 
with distinct eigenvalues $\lambda_1 < \lambda_2 < \dots < \lambda_d$. 
For each $n\in \N$, we let $(\epsilon_k^{n})_{k \in \N}$ be a sequence of i.i.d $\{0,1\}$-valued Bernoulli variables with mean $p$ in the sense that 
\begin{equation*}
\P[\epsilon_k^{n} = 1] = p = 1-\P[\epsilon_k^{n} = 0]\,.
\end{equation*}
Furthermore, for $t\geq 0$, we set $\epsilon_t^{n} := \epsilon_{[nt]}^{n}$. 

In the following, the process $(H^\beta(t))_{t\geq 0}$ will denote a symmetric Brownian motion, i.e. a process with values in the set of $d\times d$ symmetric matrices
(respectively Hermitian if
$\beta=2$) with entries $H_{ij}^\beta(t), t\geq 0, i \leq j$ constructed via independent real valued Brownian 
motions $(B_{ij}, \tilde{B}_{ij}, 1\leq i \leq j \leq d)$ by 
\begin{align}\label{sym_brownian}
H_{ij}^\beta(t)= \left\{
    \begin{array}{ll}
    B_{ij}(t) + i (\beta-1) \tilde{B}_{ij}(t) & \mbox{if } i < j \\
    \sqrt{2}\, B_{ii}(t) & \mbox{otherwise}
    \end{array}
\right. 
\end{align}  
\begin{definition}\label{defM}For each $n\in \N$, we define a diffusive matrix process $(M_n^\beta(t))_{t\geq 0}$  such that $M_n^\beta(0):=M_0^\beta$ and for $t\geq 0$
\begin{equation}
dM_n^\beta(t) = -\gamma M_n^\beta(t) dt + \epsilon_t^n dH^\beta_t + (1-\epsilon_t^n) dY_t
\end{equation} 
where $(H_t^\beta)_{t\geq 0}$ is a $d \times d$ symmetric (resp. Hermitian) as defined in \eqref{sym_brownian}
whereas
\begin{align*}
dY_t = \sqrt{2} \sum_{i=1}^d \chi_i^n\left(\frac{[nt]}{n}\right) dB_t^i
\end{align*}
with i.i.d Brownian motions $(B_t^i)_{t\geq0}$ and 
where $\chi_i^n([nt]/n)$ is the spectral projector associated to the $i$-th eigenvalue $\lambda_i([nt]/n)$ of the matrix $M_n^\beta([nt]/n)$ if 
the eigenvalues are numbered as $\lambda_1([nt]/n) < \lambda_2([nt]/n) < \dots < \lambda_d([nt]/n)$ (we shall see that the above is possible as the eigenvalues are almost surely distinct at the given times $\{k/n, k\in\mathbb N\}$). 
\end{definition}
 
As for all $t$, the matrix $M_n^\beta(t)$ is in the space $\Ha_d^\beta$, we know that it can be decomposed as 
\begin{equation*}
M_n^\beta(t)=O_n^\beta(t) \Delta_n^\beta(t) O_n^\beta(t)^*
\end{equation*}  
where $\Delta_n^\beta(t)$ is the diagonal matrix whose diagonal is the vector of the ordered eigenvalues of $M_n^\beta(t)$
and where $O_n^\beta(t)$ is in the space $\mathcal{O}^\beta_d$ for all $t\in \R_+$.
We also introduce a matrix $O^\beta(0)$ to be the initial orthogonal matrix (resp. unitary if $\beta=2$) such that 
$M_0^\beta(t)=O^\beta(0) \Delta_0 {O^\beta(0)}^*$ where $\Delta_0:=\text{diag}(\lambda_1,\dots, \lambda_d)$.

The evolution of the eigenvalues
of $M_n^\beta(t)$ during the time interval $[k/n;(k+1)/n]$ is given by independent Brownian motions
if $\epsilon_k^n=0$ and by Dyson Brownian motions if $\epsilon_k^n=1$.

The eigenvectors of $M_n^\beta(t)$ do not evolve on intervals $[k/n;(k+1)/n]$ such that $\epsilon_k^n=0$ and evolve with the classical 
diffusion of the eigenvectors of Dyson Brownian motions if $\epsilon_k^n=1$ (see \cite{AGZ} for a review on Dyson Brownian motion).

Our main theorems describe the asymptotic properties of the ordered eigenvalues of the matrix $M_n^\beta(t)$ denoted in the following as
\begin{equation}\label{order_eigenvalues_def}
(\lambda^n_1(t)\leq \lambda_2^n(t) \leq \cdots\leq \lambda_d^n(t))
\end{equation} 
and also those of the matrix $O_n^\beta(t)$ defined above, as $n$ goes to infinity.

Let $(b_t^i)_{t\geq 0}, i \in\{1,\dots,d\}$ be a family of independent Brownian motions on $\R$.
Recall that C\'epa and L\'epingle showed in \cite{Cepa} the uniqueness
and existence of the strong solution to
the stochastic differential system
\begin{equation}\label{theeqlim}
d\lambda_i(t)=-\gamma \lambda_i(t) dt +\sqrt{2}db_t^i+\beta
p\sum_{j\neq i}\frac{1}{\lambda_i(t)-\lambda_j(t)} dt
\end{equation}
starting from $\lambda(0)=(\lambda_1\leq\lambda_2\leq\cdots\leq\lambda_d)$
and such that for all $t\geq 0$
\begin{equation}
\lambda_1(t)\le \lambda_2(t)\le\cdots \le \lambda_d(t)\quad a.s.
\end{equation}

For the scaling limit of the {\it ordered} eigenvalues, we shall prove that
\begin{theorem}\label{main}
Let  $M_0^\beta$ be  a symmetric (resp. Hermitian
)
matrix  if $\beta=1$ (resp. $\beta=2$)
with distinct eigenvalues $\lambda_1 < \lambda_2 < \dots < \lambda_d$ and $(M_n^\beta(t))_{t\ge 0}$ be the matrix process defined in Definition \ref{defM}.
Let $\lambda_1^n(t)\leq \dots\leq \lambda_d^n(t)$ be the ordered eigenvalues of the matrix $M_n^\beta(t)$. 
Let also $(\lambda_1(t),\dots,\lambda_d(t))_{t\geq 0}$ be the unique strong solution  of \eqref{theeqlim}
with initial conditions in $t=0$ given by $(\lambda_1, \lambda_2, \dots, \lambda_d)$.

Then, for any $T<\infty$,  the process $(\lambda_1^n(t),\dots,\lambda_d^n(t))_{t\in [0,T]}$ converges in law as $n$ goes to infinity towards 
the process $(\lambda_1(t),\dots,\lambda_d(t))_{t\in [0,T]}$ 
in the space of continuous functions $\mathcal{C}([0,T],\R^d)$ embedded with the uniform topology. 
\end{theorem}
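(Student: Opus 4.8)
The plan is to establish tightness of the laws of the ordered eigenvalue processes $\lambda^n:=(\lambda_1^n,\dots,\lambda_d^n)$ in $\mathcal C([0,T],\R^d)$, to identify every subsequential limit as the solution of the martingale problem attached to \eqref{theeqlim}, and to invoke the well-posedness result of C\'epa--L\'epingle to conclude. The engine of the limit is an averaging of the fast Bernoulli process: as $n\to\infty$, $\epsilon^n_t$ is replaced by its mean $p$, so that the drift produced by the Dyson part, which acts only a proportion $p$ of the time, becomes $\beta p\sum_{j\ne i}(\lambda_i-\lambda_j)^{-1}$. The first and decisive point for tightness is that $\lambda^n(t)$ is a $1$-Lipschitz function of the matrix $M_n^\beta(t)$: by the Hoffman--Wielandt inequality, $\|\lambda^n(t)-\lambda^n(s)\|^2\le\|M_n^\beta(t)-M_n^\beta(s)\|^2_{HS}$. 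Now $M_n^\beta$ solves a linear equation with the harmless drift $-\gamma M_n^\beta\,dt$ and a martingale part whose Hilbert--Schmidt bracket grows at a bounded rate uniformly in $n$ (for $dY_t$ this uses $\|\chi_i^n\|_{HS}^2=\tr\chi_i^n=1$; for $dH^\beta_t$ it is immediate from \eqref{sym_brownian}). Hence a Gronwall argument together with Doob's and the Burkholder--Davis--Gundy inequalities will give, for every $m\ge1$, $\sup_n\E[\sup_{t\le T}\|M_n^\beta(t)\|_{HS}^{2m}]<\infty$ and $\E[\|M_n^\beta(t)-M_n^\beta(s)\|_{HS}^{2m}]\le C_{m,T}|t-s|^m$; through the Lipschitz bound the same estimates pass to $\lambda^n$, and Kolmogorov's criterion yields tightness in $\mathcal C([0,T],\R^d)$. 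Every limit point is $\R^d$-valued with non-decreasing coordinates and equals $(\lambda_1,\dots,\lambda_d)$ at $t=0$. Note that the Coulomb singularity never enters these bounds: it is a feature of the eigenvalue equation, not of the (perfectly regular) matrix equation.

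Next I would derive a semimartingale decomposition along permutation-symmetric test functions. Let $\phi\in C^\infty(\R^d)$ be symmetric with bounded derivatives, put $\mathcal L_0\phi:=\sum_i\partial_i^2\phi-\gamma\sum_ix_i\partial_i\phi$ and $g_\phi(x):=\sum_{i<j}\frac{\partial_i\phi(x)-\partial_j\phi(x)}{x_i-x_j}$; by symmetry of $\phi$ the function $g_\phi$ extends to a bounded continuous function on $\R^d$. On an interval $[k/n,(k+1)/n]$ with $\epsilon^n_k=1$ the eigenvalues follow the confined $\beta$-Dyson dynamics ($\beta\in\{1,2\}$, so they stay distinct) and It\^o's formula gives $d\phi(\lambda^n_t)=dM^n_\phi(t)+(\mathcal L_0\phi+\beta g_\phi)(\lambda^n_t)\,dt$; on an interval with $\epsilon^n_k=0$ the matrix stays diagonal in a fixed basis, the unordered eigenvalues are $d$ independent Ornstein--Uhlenbeck diffusions, and because $\phi$, $\sum_i(\partial_i\phi)^2$ and $\mathcal L_0\phi$ are symmetric, It\^o's formula gives $d\phi(\lambda^n_t)=dM^n_\phi(t)+\mathcal L_0\phi(\lambda^n_t)\,dt$ with no local-time correction. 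Patching over $[0,T]$,
\begin{equation*}
\phi(\lambda^n_t)=\phi(\lambda^n_0)+M^n_\phi(t)+\int_0^t\bigl(\mathcal L_0\phi+\epsilon^n_s\,\beta\,g_\phi\bigr)(\lambda^n_s)\,ds,
\end{equation*}
where $M^n_\phi$ is a continuous martingale with $\langle M^n_\phi\rangle_t=2\int_0^t\sum_i(\partial_i\phi)^2(\lambda^n_s)\,ds$.

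The averaging and identification step comes next. For any bounded continuous $G:\R^d\to\R$, writing $\int_0^t(\epsilon^n_s-p)G(\lambda^n_s)\,ds$ as a sum over the grid $\{k/n\}$, isolating the term $G(\lambda^n_{k/n})$ (on which $\epsilon^n_k$ is independent of the past, so the sum of these is a martingale-increment sum) and controlling the remainder by the modulus-of-continuity estimate of Step 1, one gets $\lim_n\E[\sup_{t\le T}|\int_0^t(\epsilon^n_s-p)G(\lambda^n_s)\,ds|^2]=0$. Let $\lambda^\infty$ be any subsequential limit; by Skorokhod's representation theorem assume $\lambda^n\to\lambda^\infty$ a.s.\ in $\mathcal C([0,T],\R^d)$. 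Passing to the limit in the identity above (using the uniform integrability from Step 1 and the averaging with $G=g_\phi$) shows that for every symmetric $\phi$ as above, $\phi(\lambda^\infty_t)-\phi(\lambda^\infty_0)-\int_0^t(\mathcal L_0\phi+\beta p\,g_\phi)(\lambda^\infty_s)\,ds$ is a martingale in the filtration of $\lambda^\infty$, with bracket $2\int_0^t\sum_i(\partial_i\phi)^2(\lambda^\infty_s)\,ds$ (obtained by passing to the limit in $(M^n_\phi)^2-\langle M^n_\phi\rangle$). Since $\lambda^\infty$ takes values in $\bar W_d:=\{x_1\le\cdots\le x_d\}$, starts at $(\lambda_1,\dots,\lambda_d)$, and its generator on symmetric functions is the one of \eqref{theeqlim}, it solves the martingale problem associated with \eqref{theeqlim} (equivalently, on a possibly enlarged space, it is a weak solution of \eqref{theeqlim}). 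By the strong existence and uniqueness theorem of \cite{Cepa}, this martingale problem is well posed, so $\lambda^\infty$ has the law of the unique solution of \eqref{theeqlim}. As every subsequential limit has this law, the full sequence $(\lambda^n)_n$ converges in law in $\mathcal C([0,T],\R^d)$ to $(\lambda_1(\cdot),\dots,\lambda_d(\cdot))$, which is the assertion.

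The routine parts are the moment bounds and the matrix-level Lipschitz reduction. I expect the two genuinely delicate points to be the averaging step — one must control the feedback of the coin sequence $\epsilon^n$ on the eigenvalue paths, which is handled by conditioning on the past at the grid times $k/n$ and using that $g_\phi$ is bounded — and the well-posedness of the limiting martingale problem when $\beta p<1$, where collisions occur and one cannot avoid invoking the C\'epa--L\'epingle framework \cite{Cepa} (including the equivalence between that martingale problem and the reflected/constrained SDE). The Lipschitz trick of Step 1 is precisely what keeps the Coulomb singularity out of the compactness argument and makes the scheme run.
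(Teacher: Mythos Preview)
Your strategy is sound and genuinely different from the paper's. You argue by tightness (via Hoffman--Wielandt and matrix-level moment bounds, which the paper also establishes in its Lemmas~\ref{regularityM_n}--\ref{regularity_lambda_n}) followed by identification of the limit through the martingale problem for \emph{symmetric} test functions, the point being that $g_\phi$ extends continuously across the diagonals so the Coulomb singularity disappears; the averaging of $\epsilon^n$ then replaces $\beta$ by $\beta p$ in the drift. The paper instead builds an explicit coupling: it realizes $\lambda^n$ and $\lambda$ with the \emph{same} Brownian motions, uses the monotonicity inequality
\[
\sum_{i}\sum_{j\neq i}\bigl(\lambda_i^n-\lambda_i\bigr)\Bigl(\tfrac{1}{\lambda_i^n-\lambda_j^n}-\tfrac{1}{\lambda_i-\lambda_j}\Bigr)\le 0
\]
to control $\sum_i(\lambda_i^n-\lambda_i)^2$ pathwise up to the first collision, and then, for $p\beta<1$, introduces $\delta$-separated auxiliary processes $\lambda^\delta,\lambda^{n,\delta}$ (which jump apart by $i\delta$ at each collision) to bridge across collisions. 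What the paper buys is almost sure convergence under the coupling (stronger than weak convergence, and reused in the eigenvector analysis) without ever invoking a well-posedness statement for the limiting martingale problem. What your route buys is conceptual economy: no collision bookkeeping, no $\delta$-processes.

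The point you flag as delicate is indeed the crux, and it deserves more than a citation. C\'epa--L\'epingle prove strong existence and pathwise uniqueness for the \emph{multivalued} SDE \eqref{theeqlim}; by Yamada--Watanabe this yields uniqueness in law for weak solutions. But you must still pass from ``solves the martingale problem for smooth symmetric $\phi$'' to ``is a weak solution of \eqref{theeqlim} in the C\'epa--L\'epingle sense''. Concretely, you need that $\lambda^\infty$ spends zero Lebesgue time on the diagonals (this you can extract from $\phi=\sum_{i<j}(x_i-x_j)^2$, whose generator is strictly positive there), that $\int_0^T|\lambda_i^\infty-\lambda_j^\infty|^{-1}\,ds<\infty$ a.s.\ (so the drift is integrable and one can \emph{define} the candidate Brownian motions $b^i$), and finally that the resulting $b^i$ are Brownian motions adapted to the right filtration. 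None of this is automatic from the symmetric-function martingale problem alone; the integrability in particular is not obviously inherited from $\lambda^n$ since on $\{\epsilon^n=0\}$ the repulsion is switched off. These steps can be carried out (for instance via the power-sum coordinates $p_k=\sum_i x_i^k$, whose It\^o decomposition has polynomial coefficients), but they constitute the real work in your scheme and should be spelled out rather than deferred to \cite{Cepa}.
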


In the case where $\beta p\geq 1$, the eigenvalues almost never collide
and we will see (see section \ref{pbetagrand}) in this case that it is easy to construct a coupling
of $\lambda$ and $\lambda^n$ so that $\lambda^n$ almost surely converges
towards $\lambda$.

We shall also describe the scaling limit of the matrix $O_n^\beta(t)$ (the columns of $O_n^\beta(t)$ are the eigenvectors of $M_n^\beta(t)$) 
when $n$ tends to infinity, at least until the first collision time for the eigenvalues, 
i.e. until the time $T_1$ defined 
as $T_1:=\inf\{t\geq 0: \exists i \in \{2,\dots,d\}, \lambda_i(t)=\lambda_{i-1}(t)\}$.

Let $w_{ij}^\beta(t), 1\leq i < j\leq d $ be a family of real or complex (whether $\beta=1$ or $2$) standard Brownian motions 
(i.e. 
$w_{ij}^\beta(t) = 
B_{ij}^1(t) + \sqrt{-1} \,(\beta-1) B_{ij}^2(t)$ where 
the $B_{ij}^1,B_{ij}^2$ are standard Brownian motions on $\R$), independent of the family of Brownian motions $(b_t^i)_{t\geq 0}, i \in \{1,\dots,d\}$. 
For $i<j$, set in addition $w_{ji}^\beta(t) := \bar{w}_{ij}^\beta(t)$ and define   
the skew Hermitian matrix (i.e. such that $R^\beta=-(R^\beta)^*$) by setting for $i\neq j$,
\begin{equation*}
dR_{ij}^\beta(t) =  \frac{dw_{ij}^\beta(t)}{\lambda_i(t)-\lambda_j(t)}, \quad R_{ij}^\beta(0)=0 \,.
\end{equation*}

Then, with $\lambda_{i}(t), 0\leq t \leq T_1, i \in \{1,\dots,d\}$ being the solution of \eqref{theeqlim} until its first collision time,  
there exists a unique strong 
solution $(O^\beta(t))_{0\leq t \leq T_1}$
to the stochastic differential equation
\begin{equation}\label{eq_ev_lim_mat}
dO^\beta(t) = \sqrt{p} O^\beta(t) d R^\beta(t) - \frac{p}{2} O^\beta(t) d\langle (R^\beta)^*,R^\beta\rangle_t
\end{equation}
This solution exists and is unique since it is a linear equation in $O^\beta$ and $R^\beta$ is a well defined martingale at least until time $T_1$.
It can be shown as in \cite[Lemma 4.3.4]{AGZ} that $O^\beta(t)$ is indeed an orthogonal (resp. unitary if $\beta=2$) matrix for all $t\in[0;T_1]$.

We mention at this point that the matrix $O_n^\beta(t)$ is not uniquely defined, {\it even} when we impose the diagonal matrix 
to have a non-decreasing diagonal $\lambda_1^n(t)\leq \dots \leq \lambda_n(t)$. Indeed, the matrix $O_n^\beta(t)$ can be replaced, 
for example, by $-O_n^\beta(t)$
(other possible matrices exist). The following proposition overcomes this difficulty. 

Define $T_n(1)$ to be the first collision time of the process $(\lambda_1^n(t),\dots,\lambda_d^n(t))$.
\begin{proposition}\label{good_O_n}
There exists a continuous process
$(O_n^\beta(t))_{0\leq  t\leq T_1}$ in $\mathcal{O}^\beta_d$ with a uniquely defined law and such that for each $t\in [0;T_n(1)]$, we have 
\begin{equation*}
O_n^\beta(t) \Delta_n^\beta(t) O_n^\beta(t)^*\stackrel{law}{=} M_n^\beta(t)\,,
\end{equation*}
where $\Delta_n^\beta(t)$ is the diagonal matrix of the ordered (as in \eqref{order_eigenvalues_def}) eigenvalues of $M_n^\beta(t)$. 
\end{proposition}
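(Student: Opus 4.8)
The plan is to build the eigenvector process $O_n^\beta$ by hand, piece by piece over the deterministic grid intervals $[k/n,(k+1)/n]$, to check that it is continuous, $\mathcal{O}_d^\beta$-valued and diagonalises $M_n^\beta$ throughout, and to observe that the construction is a canonical functional of the driving Brownian motions once the initial eigenbasis $O^\beta(0)$ is fixed. Note first that $M_n^\beta$ itself is unambiguous: in Definition~\ref{defM} only the spectral \emph{projectors} $\chi_i^n$ appear, and a projector does not depend on any phase convention for its eigenvector.

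I would proceed by induction on $k$, maintaining the hypothesis that $O_n^\beta$ is defined, continuous and $\mathcal{O}_d^\beta$-valued on $[0,k/n]$, that $M_n^\beta(t)=O_n^\beta(t)\Delta_n^\beta(t)O_n^\beta(t)^*$ there, and that column $i$ of $O_n^\beta(t)$ is the unit eigenvector of the $i$-th smallest eigenvalue of $M_n^\beta(t)$; the base case $k=0$ is the given decomposition $M_0^\beta=O^\beta(0)\Delta_0 O^\beta(0)^*$. On $[k/n,(k+1)/n]$ there are two cases. If $\epsilon_k^n=0$, conjugating $dM_n^\beta=-\gamma M_n^\beta\,dt+\sqrt{2}\sum_i\chi_i^n(k/n)\,dB_t^i$ by the \emph{frozen} matrix $U:=O_n^\beta(k/n)$ shows that $U^*M_n^\beta(t)U$ stays diagonal, its entries being $d$ independent Ornstein--Uhlenbeck processes issued from $\lambda_1^n(k/n)<\cdots<\lambda_d^n(k/n)$; I set $O_n^\beta(t):=U$ on this interval, and for $t<T_n(1)$ those paths do not meet, hence remain the \emph{ordered} eigenvalues of $M_n^\beta(t)$, so the induction hypothesis is preserved. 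If $\epsilon_k^n=1$, then $M_n^\beta$ solves $dM_n^\beta=-\gamma M_n^\beta\,dt+dH^\beta$; since $-\gamma M_n^\beta\,dt$ is diagonal in the eigenbasis it does not enter the eigenvector dynamics, and on this single interval $H^\beta$ is a genuine symmetric (resp. Hermitian) Brownian motion, whose eigenvalue repulsion parameter is $1$ (resp. $2$), so the eigenvalues stay simple and ordered for every $t$ in the open interval a.s. The classical first-order perturbation calculus (as in \cite[Section~4.3]{AGZ}) then identifies $O_n^\beta$ with the unique strong solution of the linear matrix SDE $dO_n^\beta=O_n^\beta\,d\Pi_n^\beta$, where $\Pi_n^\beta$ is skew-Hermitian with off-diagonal entries $d(\Pi_n^\beta)_{ij}=\bigl((O_n^\beta)^*\,dH^\beta\,O_n^\beta\bigr)_{ij}\big/\bigl(\lambda_i^n(t)-\lambda_j^n(t)\bigr)$ plus the It\^o correction keeping $(O_n^\beta)^*O_n^\beta$ constant; by \cite[Lemma~4.3.4]{AGZ} this solution stays in $\mathcal{O}_d^\beta$, and the induction hypothesis is again preserved.

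Concatenating these pieces, which agree at the grid points so that the resulting path is continuous, produces an $\mathcal{O}_d^\beta$-valued process on $[0,T_n(1)]$ satisfying $O_n^\beta(t)\Delta_n^\beta(t)O_n^\beta(t)^*=M_n^\beta(t)$, hence in particular the claimed identity in law; for $t\in(T_n(1),T_1]$ one extends it by any continuous convention (for instance by running the same recipe further, the grid eigenvalues being a.s. still simple and no collision occurring inside the $\epsilon_k^n=1$ intervals), an extension that plays no role below. The law of $(O_n^\beta(t))_{0\le t\le T_1}$ is then uniquely determined: with $O^\beta(0)$ and the family of driving Brownian motions fixed, on every grid interval $O_n^\beta$ is either constant or the unique strong solution of a linear SDE, so the whole process is a fixed measurable functional of the noise. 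This is the real content of the statement, since a pointwise, unconstrained choice of a diagonalising orthogonal matrix would be neither continuous nor canonically distributed, whereas the recipe above pins down both; the remaining freedom in $O^\beta(0)$, defined only up to right multiplication by a diagonal element $P$ of $\mathcal{O}_d^\beta$, simply replaces the entire process by $O_n^\beta(\cdot)P$ and can be removed by a fixed normalisation of the initial eigenvectors.

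The step that needs the most care is the case $\epsilon_k^n=1$: one must rigorously carry over the classical eigenvalue and eigenvector stochastic calculus to the Ornstein--Uhlenbeck matrix process on each such interval, the essential points being the a.s. non-collision of its eigenvalues there --- which gives the eigenvector SDE locally bounded coefficients and a genuine strong solution --- and the preservation of their order, which is what keeps the column labelling of $O_n^\beta$ consistent with the ordering used to define $\Delta_n^\beta$, so that the gluing across grid points respects the spectral identity.
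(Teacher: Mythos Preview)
Your argument is correct and, at the level of ideas, matches the paper's: on $\epsilon_k^n=0$ intervals the eigenbasis is frozen, on $\epsilon_k^n=1$ intervals it evolves by the classical Dyson eigenvector SDE, and one concatenates. The paper compresses this into a single linear SDE
\[
dO_n^\beta(t)=\epsilon_t^n\,O_n^\beta(t)\,dR^\beta(t)-\tfrac{\epsilon_t^n}{2}\,O_n^\beta(t)\,d\langle (R^\beta)^*,R^\beta\rangle_t,
\]
with $dR^\beta_{ij}=dw^\beta_{ij}/(\lambda_i^n-\lambda_j^n)$ driven by a family $(w_{ij}^\beta)$ of Brownian motions \emph{independent of the eigenvalue process}, and then quotes \cite[Lemma~4.3.4]{AGZ} for the orthogonality. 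The substantive difference is that you drive the eigenvector SDE by the conjugated increments $(O_n^\beta)^*dH^\beta O_n^\beta$, obtaining a pathwise functional of the original noise and hence the pathwise identity $O_n^\beta\Delta_n^\beta(O_n^\beta)^*=M_n^\beta$, whereas the paper replaces these by an independent Brownian family via L\'evy's characterisation and only claims equality in law. Your route is more concrete; the paper's choice of independent $w_{ij}^\beta$ pays off in the next step (Theorem~\ref{theo_vectors}), where having the driving noise decoupled from the eigenvalues makes the convergence analysis cleaner.
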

Proposition \ref{good_O_n} is proved in Section \ref{section_eigenvectors}. We are now ready to state our main result for the convergence 
in law of the matrix $O_n^\beta(t)$.

\begin{theorem}\label{theo_vectors} Let $\eta$ and $T$ be positive
real numbers. Then, conditionally on the sigma-algebra generated by
 $(\lambda_1^n(s),\dots,\lambda_d^n(s)), \\ 0\leq s\leq T_1\wedge T$, 
the matrix process $(O_n^\beta(t))_{0\leq t\leq (T_1- \eta)\wedge T}$ introduced in Proposition \ref{good_O_n} converges in law in the space of continuous functions 
$\mathcal{C}([0;T],\mathcal{O}^\beta_d)$
towards the unique solution of the stochastic differential equation \eqref{eq_ev_lim_mat}. 
%

\end{theorem}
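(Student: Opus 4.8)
The plan is to work conditionally on the eigenvalue paths and to show tightness of $(O_n^\beta(t))_{0\le t\le (T_1-\eta)\wedge T}$ together with identification of every subsequential limit as the unique strong solution of \eqref{eq_ev_lim_mat}. First I would write down, on each subinterval $[k/n,(k+1)/n]$, the explicit one-step evolution of $O_n^\beta$. On intervals where $\epsilon_k^n=0$ the eigenvectors do not move, so $O_n^\beta$ is constant; on intervals where $\epsilon_k^n=1$ the matrix evolves by the Dyson Brownian motion eigenvector dynamics, i.e.\ $O_n^\beta$ picks up an increment driven by a skew-Hermitian matrix with off-diagonal entries $dw_{ij}/(\lambda_i^n-\lambda_j^n)$ plus an Itô bracket correction (this is the classical computation, as in \cite[Lemma 4.3.4]{AGZ}, applied to the piece of Hermitian Brownian motion active on that subinterval). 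The point is that conditionally on the eigenvalues, the only randomness left in $O_n^\beta$ is the family $(\epsilon_k^n)$ and the driving Brownian motions of $H^\beta$; since $\epsilon_k^n=1$ with probability $p$ independently, and $[nt]/n\to t$, the time-changed generator of $O_n^\beta$ converges — averaging over the coin — to $p$ times the generator of the Dyson eigenvector flow. Concretely, summing the one-step increments and using that $\sum_{k\le nt}\epsilon_k^n/n\to pt$ by the law of large numbers (uniformly in $t\le T$), the martingale part of $O_n^\beta$ converges to $\sqrt p\,O^\beta\,dR^\beta$ and the finite-variation part to $-\tfrac p2 O^\beta d\langle (R^\beta)^*,R^\beta\rangle$, exactly the terms in \eqref{eq_ev_lim_mat}.

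Second, to make this rigorous I would set up a martingale-problem / Stroock–Varadhan argument. For a smooth test function $f$ on $\mathcal{O}_d^\beta$ I would compute, using Itô's formula on each subinterval and then taking conditional expectation over $\epsilon_k^n$ given the past, that
\[
f(O_n^\beta(t)) - f(O_n^\beta(0)) - \int_0^t (\mathcal{L}_s^n f)(O_n^\beta(s))\,ds
\]
is a martingale up to a negligible error, where $\mathcal{L}_s^n$ is a discretized generator that converges, as $n\to\infty$, to the generator $\mathcal{L}_s$ associated with \eqref{eq_ev_lim_mat} (this uses that the eigenvalue gaps $\lambda_i^n(s)-\lambda_{i-1}^n(s)$ stay bounded below on $[0,(T_1-\eta)\wedge T]$ — hence the coefficients $1/(\lambda_i^n-\lambda_j^n)$ are bounded and Lipschitz there — which is where the cutoff at $T_1-\eta$ and the conditioning on the eigenvalue $\sigma$-algebra are essential, together with Theorem \ref{main} giving $\lambda_i^n\to\lambda_i$ uniformly). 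Tightness of $(O_n^\beta)$ in $\mathcal{C}([0,T],\mathcal{O}_d^\beta)$ follows from a Kolmogorov-type moment bound on increments, which is routine once the coefficients are uniformly bounded; compactness of $\mathcal{O}_d^\beta$ makes the marginals automatically tight. Any limit point then solves the martingale problem for $\mathcal{L}_s$, hence (by Itô) is a weak solution of \eqref{eq_ev_lim_mat}, and since that SDE is linear with bounded Lipschitz coefficients on the relevant time interval it has a unique strong solution, so the limit is unique and the full sequence converges. Finally I would note that Proposition \ref{good_O_n} is what lets us speak of the law of $O_n^\beta$ unambiguously, and that $T_n(1)\to T_1$ (again from Theorem \ref{main}) ensures the construction there is defined on all of $[0,(T_1-\eta)\wedge T]$ with probability tending to one.

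The main obstacle I expect is the uniform control of the eigenvector increments across the coin flips near a near-collision: although we cut off at $T_1-\eta$ for the limiting process, the discrete eigenvalues $\lambda_i^n$ can a priori come closer together than $\lambda_i$ on short time scales, so one must show that, with high probability, $\min_{i}(\lambda_i^n(s)-\lambda_{i-1}^n(s))$ stays bounded below by a positive constant on $[0,(T_1-\eta)\wedge T]$ uniformly in $n$. This should follow from the uniform convergence $\lambda_i^n\to\lambda_i$ of Theorem \ref{main} plus the definition of $T_1$, but it requires care because the eigenvector SDE coefficients blow up precisely where the gap vanishes; one handles it by working on the event where the gap exceeds $\delta>0$, proving convergence there, and then letting $\delta\downarrow 0$ after $\eta$ is fixed. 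A secondary technical point is justifying the interchange of the conditional expectation over $(\epsilon_k^n)$ with the Itô expansion on each mesh interval and controlling the accumulated $O(1/n)$ errors, which is standard but must be done with the boundedness of the coefficients in hand.
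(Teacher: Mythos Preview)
Your plan is sound and would work, but it takes a noticeably different route from the paper's proof. The paper does not set up a martingale problem or prove tightness from scratch. Instead it exploits the fact that, by the very construction in Proposition \ref{good_O_n}, the process $O_n^\beta$ is already the strong solution of the continuous SDE
\[
dO_n^\beta(t)=\epsilon_t^n\,O_n^\beta(t)\,dR^\beta(t)-\tfrac{\epsilon_t^n}{2}\,O_n^\beta(t)\,d\langle (R^\beta)^*,R^\beta\rangle_t,
\]
so the whole question becomes a \emph{stability of SDEs} question. The paper then (i) shows, via Rebolledo's theorem, that the driving martingales $w_{ij}^{\beta,n}(t)=\int_0^t\epsilon_s^n\,dw_{ij}^\beta(s)$ converge to $\sqrt{p}\,w_{ij}^\beta$, (ii) checks directly that $\int_0^t \epsilon_s^n(\lambda_i^n-\lambda_j^n)^{-2}\,ds\to p\int_0^t(\lambda_i-\lambda_j)^{-2}\,ds$ uniformly on $[0,T_1^\varepsilon]$, and (iii) invokes a ready-made convergence theorem for solutions of SDEs driven by converging semimartingales \cite[Theorem 6.9]{Jacod}. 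Your Stroock--Varadhan approach reconstructs by hand what that theorem packages; it is correct and more self-contained, but longer.

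Two small points to watch in your version. First, you invoke Theorem \ref{main} for the uniform convergence $\lambda_i^n\to\lambda_i$, but Theorem \ref{main} is only convergence in law; for the conditional argument you need the almost sure convergence under the coupling, which is Proposition \ref{firsttheo} (this is exactly what the paper uses). Second, you do not need $T_n(1)\to T_1$, only $T_1\le\liminf_n T_n(1)$ a.s., which is again the content of Proposition \ref{firsttheo}; this is enough to guarantee that $O_n^\beta$ is defined on $[0,(T_1-\eta)\wedge T]$ for all large $n$. Your identification of the main obstacle (uniform lower bound on the gaps of $\lambda^n$ on $[0,(T_1-\eta)\wedge T]$) is correct, and the paper handles it the same way you propose: work up to the stopping time $T_1^\varepsilon$ where a gap first drops below $\varepsilon$, then let $\varepsilon\downarrow 0$ after $\eta$ is fixed.
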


Theorem \ref{theo_vectors} gives a convergence result as $n$ goes to infinity for the eigenvectors of the matrix process $(M_n^\beta(t))$ but only 
until the first collision time $T_1$. If $p\beta\geq 1$, the result is complete as one can show (see \cite{AGZ} and section \ref{pbetagrand}) that the process 
$(\lambda_1(t),\dots,\lambda_d(t))$ is a non colliding process (i.e. almost surely $T_1=\infty$). However, if $p\beta<1$, it would be interesting to 
have a convergence on all compact sets $[0;T]$ even after collisions occurred. Our next results describe the behavior of the columns of the matrix 
$O^\beta(t)$ denoted as
$(\phi_1(t),\dots,\phi_d(t))$ when $t\rightarrow T_1$ with $t<T_1$.  

We first need to describe the behavior of the eigenvalues $(\lambda_1(t),\dots,\lambda_d(t))$ in the left vicinity of $T_1$.

\begin{proposition}\label{first_coll_singularity}
If $p\beta <1$ then  almost surely $T_1<\infty$ and there exists a unique index $i^*\in \{2,\dots,d\}$ such that $\lambda_{i^*}(T_1)=\lambda_{i^*-1}(T_1)$. 
While we have, for all $t\geq 0$ and almost surely, 
\begin{equation*}
\int_0^{t} \frac{ds}{(\lambda_{i^*}-\lambda_{i^*-1})(s)} < +\infty\,, 
\end{equation*}
the following divergence occurs almost surely 
\begin{equation}\label{div_integral}
\int_0^{T_1} \frac{ds}{(\lambda_{i^*}-\lambda_{i^*-1})^2(s)} = +\infty\,. 
\end{equation}
\end{proposition}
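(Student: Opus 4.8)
The plan is to work with the gap process $\delta(t) := (\lambda_{i^*} - \lambda_{i^*-1})(t)$ for an index $i^*$ achieving a collision at $T_1$, and to read off its dynamics from \eqref{theeqlim}. Subtracting the equation for $\lambda_{i^*-1}$ from that for $\lambda_{i^*}$ gives
\begin{equation*}
d\delta(t) = -\gamma \delta(t)\,dt + \sqrt{2}\,d\tilde b_t + 2\beta p\,\frac{dt}{\delta(t)} + \beta p \sum_{j \neq i^*, i^*-1}\Bigl(\frac{1}{\lambda_{i^*}-\lambda_j} - \frac{1}{\lambda_{i^*-1}-\lambda_j}\Bigr)dt,
\end{equation*}
where $\tilde b$ is a Brownian motion (with an extra innocuous factor from the difference of two independent Brownian motions, which I would normalize). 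The last sum is bounded on $[0, T_1]$ away from the other gaps; the key point is that near $T_1$ the process $\delta$ behaves like a Bessel-type process of dimension $d_{\mathrm{eff}} = 1 + 2\beta p$ (after time change by the local diffusion coefficient $2$), so that $p\beta < 1$ forces $d_{\mathrm{eff}} < 3$, which is exactly the regime in which a Bessel process hits $0$.

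First I would establish that $T_1 < \infty$ almost surely and that a.s. exactly one index collides. Finiteness follows from a comparison argument: the repulsion $2\beta p/\delta$ with $\beta p < 1$ is too weak to prevent the minimal gap from being pushed to $0$ by the Brownian fluctuation (compare with a subcritical Bessel process, or invoke the C\'epa--L\'epingle framework in which collisions occur). Simplicity of the colliding index at the first collision time is a genericity statement: the event that two distinct gaps vanish simultaneously has probability zero, which I would get from the fact that the joint law of the eigenvalues has a density with respect to Lebesgue measure at any fixed time, plus a countable covering of $[0,\infty)$, or alternatively from an Itô/co-dimension argument on the diffusion in $\R^d$.

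Next, the integrability statements. For $\int_0^t ds/\delta(s) < \infty$ with $t \geq 0$ (in particular up to and including a generic time, and I would phrase it so that it holds on $[0, T_1]$ after checking the endpoint): I would use that $\mathbb{E}\int_0^t ds/\delta(s)$ is finite because $1/\delta$ is the drift of $\delta$ and $\delta$ itself is a semimartingale with controlled expectation — more precisely, from the SDE, $\delta(t) - \delta(0) + \gamma\int_0^t \delta\,ds - \sqrt{2}\tilde b_t - (\text{bounded})$ equals $2\beta p \int_0^t ds/\delta(s)$, and taking expectations (after a localization/Fatou argument, using $\delta \geq 0$ and $\mathbb{E}\sup_{[0,t]}\delta < \infty$) bounds the integral. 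This is the Bessel fact that a dimension-$>1$ Bessel process has $1/\delta$ locally integrable. For the divergence \eqref{div_integral}: this is the crux and the main obstacle. The cleanest route is Itô's formula applied to $\log \delta(t)$ or to $\delta(t)^{\alpha}$ for a suitable exponent, or better, to compare $\delta$ after a random time change with a genuine Bessel process of dimension $1 + 2\beta p \in (1,3)$ and use the known fact that for such a process $\int_0^{\tau_0} ds/\delta(s)^2 = \infty$ at the hitting time $\tau_0$ of $0$. Concretely, write $f(x) = \log x$: then $d\log\delta = \frac{d\delta}{\delta} - \frac{d\langle\delta\rangle}{2\delta^2} = (\cdots)\,dt + \frac{\sqrt 2\,d\tilde b}{\delta} + \frac{2\beta p\,dt}{\delta^2} - \frac{dt}{\delta^2} + (\text{bounded} + \gamma\text{ terms})/\delta\cdot dt$, so that
\begin{equation*}
\log\delta(t) - \log\delta(0) = \text{(martingale)} + \int_0^t \frac{(2\beta p - 1)\,ds}{\delta(s)^2} + \int_0^t \frac{\text{bounded}}{\delta(s)}\,ds + \gamma\text{-drift}.
\end{equation*}
As $t \uparrow T_1$, the left side tends to $-\infty$; the $1/\delta$-integral converges by the first integrability statement; the martingale part, whose quadratic variation is $2\int_0^t ds/\delta(s)^2$, is controlled via Dambis--Dubins--Schwarz, so that if $\int_0^{T_1} ds/\delta^2 < \infty$ the martingale would converge a.s. and the whole right-hand side would have a finite limit, contradicting $\log\delta(T_1) = -\infty$. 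Hence $\int_0^{T_1} ds/\delta(s)^2 = \infty$ a.s. The main obstacle is making the time-change / Bessel comparison rigorous in the presence of the $-\gamma\delta$ term and the bounded but random perturbation from the other eigenvalues; I would handle this by localizing to a stopping time at which all other gaps stay bounded below, so the perturbation is genuinely bounded, and then the argument above is self-contained.
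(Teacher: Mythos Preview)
Your contradiction argument for the divergence \eqref{div_integral}---apply It\^o to $\log\delta$, use that the $1/\delta$-integral converges and that a martingale with finite quadratic variation has an almost sure limit, conclude that the right-hand side stays finite while the left goes to $-\infty$---is exactly the paper's proof. The paper likewise localizes so that the perturbation from the non-colliding eigenvalues is genuinely bounded (it invokes $T_1<\tau^3_\varepsilon$). One small computational slip: with noise $\sqrt{2}(db^{i^*}-db^{i^*-1})$ the quadratic variation of $\delta$ is $4\,dt$, so the It\^o correction in $d\log\delta$ is $-2\,dt/\delta^2$ and the coefficient of $\int ds/\delta^2$ is $2\beta p-2$, not $2\beta p-1$; this does not affect the logic since both are negative for $\beta p<1$.

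The genuine gap is in your treatment of the uniqueness of $i^*$. The route you propose---density of the law at each fixed time plus a countable covering of $[0,\infty)$---does not work: absolute continuity at every deterministic $t$ says nothing about whether two gaps can vanish at the same \emph{random} time $T_1$, and no countable set of times covers the continuum. Your fallback co-dimension heuristic is in the right spirit but would require real work, precisely because the drift is singular on the set you want to avoid. The paper earns this point the hard way: Lemma~\ref{multiple_collisions} (no triple collisions) and Lemma~\ref{2_Couples} (two disjoint pairs cannot simultaneously be small) give quantitative bounds on the hitting times of $\varepsilon$-neighbourhoods, obtained by applying It\^o to suitable powers of $S^I_t=\sum_{i,j\in I}(\lambda_i-\lambda_j)^2$ and to the analogous two-pair functional, followed by Gronwall; these are then combined in Lemma~\ref{unicity_i}. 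The argument is elementary but not short, and it is exactly what legitimizes the localization step you invoke at the end of your $\log\delta$ computation.
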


The first part of Proposition \ref{first_coll_singularity} is proved in subsections \ref{regularity_prop_of_lim} and \ref{estimate_collisions}, the
last statement is proved in \ref{section_eigenvectors}.
Hence equality \eqref{div_integral} implies the existence of diverging integrals in the SDE \eqref{eq_ev_lim_mat}.
Because of this singularity, we will show 
\begin{proposition}\label{limit_phi_T_1}
Conditionally on $(\lambda_1(t),\dots,\lambda_d(t)),0\leq t\leq T_1$, we have: 
\begin{enumerate}
\item For all $j\neq i^*,i^*-1$, the eigenvector $\phi_j(t)$ for the eigenvalue $\lambda_j(t)$  converges almost surely to a vector denoted $\tilde\phi_j$
as $t$ grows to $T_1$. 
The family $\{\tilde\phi_j,j\neq i^*,i^*-1\}$ is an orthonormal family of $\R^d$ (respectively $\C^d$) if $\beta=1$ (resp. $\beta=2$). We denote 
by $V$ the corresponding generated subspace and 
by $W$ its two dimensional orthogonal complementary in $\R^d$ (resp. $\C^d$).
\item The family $\{\phi_{i^*}(t),\phi_{i^*-1}(t)\}$ converges weakly to the uniform law on the  orthonormal basis of $W$ as $t$ grows to $T_1$.
\end{enumerate}
\end{proposition}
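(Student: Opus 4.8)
\emph{Proof sketch.} The plan rests on the observation, made quantitative by Proposition~\ref{first_coll_singularity}, that the whole singularity of the eigenvector equation~\eqref{eq_ev_lim_mat} at $T_1$ sits in the single entry $R^\beta_{i^*,i^*-1}$. Indeed, by Proposition~\ref{first_coll_singularity} only $\lambda_{i^*-1}$ and $\lambda_{i^*}$ meet at $T_1$, so for any pair $\{l,k\}\neq\{i^*-1,i^*\}$ the gap $\lambda_l-\lambda_k$ stays bounded away from $0$ on a left neighbourhood of $T_1$ and hence $\langle R^\beta_{lk},\bar R^\beta_{lk}\rangle_{T_1}=\int_0^{T_1}(\lambda_l-\lambda_k)^{-2}(s)\,d\langle w^\beta_{lk},\bar w^\beta_{lk}\rangle_s<\infty$, whereas this bracket is infinite for $\{l,k\}=\{i^*-1,i^*\}$ by~\eqref{div_integral}. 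We work conditionally on $(\lambda_i(s))_{0\le s\le T_1}$ throughout; since the family $(w^\beta_{ij})$ is independent of the Brownian motions driving the eigenvalues, this conditioning leaves each $w^\beta_{ij}$ a standard (real, resp.\ complex) Brownian motion and turns the clock $c(t):=\langle R^\beta_{i^*,i^*-1},\bar R^\beta_{i^*,i^*-1}\rangle_t$ into a deterministic, continuous, strictly increasing function with $c(T_1)=+\infty$.

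\emph{Step 1 (non-colliding columns) and Step 2 (factoring the fast rotation).} For $j\notin\{i^*-1,i^*\}$, \eqref{eq_ev_lim_mat} gives for $\phi_j$ a linear equation driven by the mutually orthogonal martingales $(R^\beta_{lj})_{l\ne j}$, whose total bracket $\sum_{l\ne j}\langle R^\beta_{lj},\bar R^\beta_{lj}\rangle$ is finite on $[0,T_1)$ by the first paragraph, with a drift absolutely continuous with respect to it; hence $\phi_j(t)$ converges almost surely as $t\uparrow T_1$ to a limit $\tilde\phi_j$, and by continuity of $O^\beta(\cdot)$ in $\mathcal O^\beta_d$ the family $\{\tilde\phi_j:j\ne i^*-1,i^*\}$ is orthonormal. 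Set $V:=\mathrm{span}\{\tilde\phi_j\}$, $W:=V^\perp$, and $\Pi(t):=I-\sum_{j\ne i^*-1,i^*}\phi_j(t)\phi_j(t)^*$, so that $\Pi(t)\to\Pi_W$ almost surely; this is item~(1). Next write $O^\beta(t)=O^\beta(s_0)\,P(s_0,t)$ with $P(s_0,\cdot)\in\mathcal O^\beta_d$ solving $dP=\sqrt p\,P\circ dR^\beta$, $P(s_0,s_0)=I$, and split $R^\beta(t)-R^\beta(s_0)=S(s_0,t)+G(s_0,t)$, where $S$ retains only the two entries indexed by $(i^*-1,i^*)$ and $(i^*,i^*-1)$ and $G$ all the others. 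Let $Q(s_0,\cdot)$ solve $dQ=\sqrt p\,Q\circ dS$, $Q(s_0,s_0)=I$: since $S$ lives in the $\{i^*-1,i^*\}$–block, $Q(s_0,t)$ is the identity off that block and there a $2\times2$ special orthogonal (resp.\ $SU(2)$) matrix $g(s_0,t)$. Putting $P=\tilde P Q$, the Stratonovich product rule gives $d\tilde P=\sqrt p\,\tilde P\bigl(Q(\circ dG)Q^{-1}\bigr)$; $\tilde P=PQ^{-1}$ is a product of elements of $\mathcal O^\beta_d$, hence orthogonal, and conjugation by $Q$ preserves Frobenius norm, so a Burkholder--Davis--Gundy estimate gives $\mathbb E\bigl[\sup_{t\in[s_0,T_1)}\|\tilde P(s_0,t)-I\|^2\bigr]\le C\,\delta(s_0)$ with $\delta(s_0):=\int_{s_0}^{T_1}\sum_{(l,k)}\langle G_{lk},\bar G_{lk}\rangle\to0$ as $s_0\uparrow T_1$. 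Reading off the two columns indexed by $\{i^*-1,i^*\}$,
\begin{equation*}
(\phi_{i^*-1}(t),\phi_{i^*}(t))=(\phi_{i^*-1}(s_0),\phi_{i^*}(s_0))\,g(s_0,t)+\mathcal E(s_0,t),
\end{equation*}
with $\sup_{t\in[s_0,T_1)}\mathbb E\,\|\mathcal E(s_0,t)\|^2\to0$ as $s_0\uparrow T_1$, while $(\phi_{i^*-1}(s_0),\phi_{i^*}(s_0))$ is an orthonormal frame of $\mathrm{range}\,\Pi(s_0)$ and $\mathrm{range}\,\Pi(s_0)\to W$ almost surely.

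\emph{Step 3 (equidistribution and conclusion).} Fix $s_0$. Conditionally on $\mathcal F_{s_0}$ (and on $(\lambda_i(s))_{s\le T_1}$, already fixed), $g(s_0,t)$ is the value at the deterministic time $c(t)-c(s_0)$ of a left Brownian motion on $SO(2)$ (resp.\ $SU(2)$) started at the identity, driven by $X_1=\left(\begin{smallmatrix}0&1\\-1&0\end{smallmatrix}\right)$ and, for $\beta=2$, also by $X_2=\left(\begin{smallmatrix}0&i\\i&0\end{smallmatrix}\right)$; these bracket-generate the Lie algebra (for $\beta=2$, $[X_1,X_2]\ne0$), so by hypoellipticity and compactness the law of $g(s_0,t)$ converges in total variation to the Haar measure as $c(t)-c(s_0)\to\infty$, i.e.\ as $t\uparrow T_1$, independently of $\mathcal F_{s_0}$. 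By invariance of Haar measure, $(\phi_{i^*-1}(s_0),\phi_{i^*}(s_0))\,g(s_0,t)$ converges in law, conditionally on $(\lambda_i(s))_{s\le T_1}$, to the uniform law on orthonormal frames of the fixed plane $\mathrm{range}\,\Pi(s_0)$. Letting $s_0\uparrow T_1$, and using $\mathrm{range}\,\Pi(s_0)\to W$ together with $\mathcal E(s_0,\cdot)\to0$, a routine $3\varepsilon$ argument (pick $s_0$ close to $T_1$ so that the $\mathcal E$–error and the distance of $\mathrm{range}\,\Pi(s_0)$ to $W$ are small on an event of probability $>1-\varepsilon$, then let $t\uparrow T_1$ so that $g(s_0,t)$ is close in law to Haar) shows that $\{\phi_{i^*-1}(t),\phi_{i^*}(t)\}$ converges weakly, conditionally on $(\lambda_i(s))_{s\le T_1}$, to the uniform law on the orthonormal bases of $W$. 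This is item~(2).

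\emph{Main obstacle.} The crux is Step~2: controlling the coupling between the exploding $2\times2$ block and the remaining $d-2$ directions. A direct comparison of the flows of $R^\beta$ and of $S$ through an exponential-perturbation inequality is useless, because the reference flow $Q$ has unbounded generator along $t\uparrow T_1$. The multiplicative splitting $P=\tilde P Q$ is precisely what repairs this: $\tilde P$ is then driven only by the non-singular part $G$ of $R^\beta$, conjugated by an orthogonal matrix, so that the quantitative estimates apply to a genuinely small noise and the two limits $t\uparrow T_1$ and $s_0\uparrow T_1$ can be safely interchanged. By contrast, the ergodic input of Step~3—convergence to Haar of a hypoelliptic Brownian motion on a fixed compact Lie group—is classical once the reduction to the $2\times2$ block has been carried out.
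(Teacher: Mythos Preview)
Your argument is correct and shares the paper's three-step architecture: the non-colliding columns converge because their driving martingales have finite bracket on $[0,T_1)$; the colliding pair is, up to a small error, governed by a $2\times2$ rotation driven solely by $R^\beta_{i^*-1,i^*}$; after the time-change $t\mapsto c(t)$ this rotation runs to infinite time and equidistributes. The implementation of the middle step is where you and the paper diverge. The paper builds an explicit approximation $(\tilde\phi_{i^*-1},\tilde\phi_{i^*})$ evolving in $W$ via~\eqref{sds_tilde_phi}, then applies It\^o's formula to $\|\phi_{i^*}-\tilde\phi_{i^*}\|^2+\|\phi_{i^*-1}-\tilde\phi_{i^*-1}\|^2$ and observes (in a footnote to~\eqref{error_term_norm2_phi_tildephi}) that every singular term cancels by hand, leaving only a martingale with $O(\delta)$ bracket. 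Your multiplicative Stratonovich splitting $P=\tilde P\,Q$ makes this cancellation structural rather than computational: $\tilde P$ is automatically driven only by $Q(\circ dG)Q^{-1}$, conjugation by an orthogonal matrix preserves the Frobenius bracket, and BDG closes the estimate with no bookkeeping. This is cleaner and would generalize (e.g.\ to higher-rank degenerations) without new identities. For the equidistribution step the paper writes the $\beta=1$ solution explicitly as $\exp(\sqrt p\,AB_t)$ and reads off ergodicity on the circle; your hypoellipticity phrasing is the right abstraction and, unlike the paper, actually treats $\beta=2$, where $X_1,X_2$ do not span $\mathfrak{su}(2)$ but bracket-generate it. One small caveat common to both: since $R^\beta$ has zero diagonal the flow preserves $\det O^\beta$, so for $\beta=2$ the limiting law is Haar on $SU(2)$ acting on a fixed frame of $W$, not on all unitary frames; the statement of the proposition should be read accordingly.
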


The paper is organized as follows. In Section \ref{limit_eigenvalues_prop}, we review and establish some new properties for the limiting 
eigenvalues process $(\lambda_1(t),\dots,\lambda_d(t))$ defined in \ref{theeqlim} that will be useful later in our proof of 
Theorems \ref{main} and \ref{theo_vectors}.
We also introduce, in subsection \ref{approx_less_colliding}, a process with fewer collisions that approximates the limiting eigenvalue 
process. In fact this gives a new construction
of the limiting eigenvalues process already constructed in \cite{Cepa}, perhaps simpler and more intuitive using only standard It\^o's calculus. 
We give some useful estimates on the processes of eigenvalues and matrix entries of $M_n^\beta$ in Section \ref{eigenvalues_of_M_n}.
In Section \ref{conv_until_first_coll}, we prove the almost sure convergence of the process $(\lambda_1^n,\dots,\lambda_d^n)$ to the limiting 
eigenvalues process
$(\lambda_1,\dots,\lambda_d)$ until the first hitting time of two particles with a coupling argument. 
In Section \ref{end_proof_main}, we finish the proof of Theorem \ref{main} by approximating in the same way the process
$(\lambda_1^n,\dots,\lambda_d^n)$ with the same idea of separating the particles which collide by a distance $\delta >0$. At this point, it suffices  
to apply that the result of Section \ref{conv_until_first_coll} to show that the two approximating processes are close in the large $n$ limit. 
In Section \ref{section_eigenvectors}, we prove Theorem \ref{theo_vectors}, the last statement of Proposition \ref{first_coll_singularity} 
and Propositions \ref{good_O_n} and \ref{limit_phi_T_1}. 
 
\section{Properties of the limiting eigenvalues process}\label{limit_eigenvalues_prop}

In this section we shall study  the unique strong solution
of \eqref{theeqlim} introduced by C\'epa and L\'epingle in \cite{Cepa}. We first
derive some boundedness and smoothness properties. In view of proving the convergence of $\lambda^n$ towards this process, and in 
particular to deal with possible collisions, we construct it for $p\beta<1$ as the limit of a process which is defined similarly except when two
particles hit, when  we separate  them by a (small)  positive distance,
see Definition \ref{deflimprocdelta}.

\subsection{Regularity properties of the limiting process}\label{regularity_prop_of_lim}
\begin{lemma}\label{lemCepa}
Let $\lambda=(\lambda_1\leq \lambda_2\leq \cdots \leq \lambda_d)$.
Then there exists a unique strong solution of \eqref{theeqlim}.
Moreover, it satisfies 
\begin{itemize}
\item  For all $T<\infty$,    there exists $\alpha, M_0>0$ finite so that for $M\geq M_0$
\begin{equation}\label{zx}
\P\left[\max_{1\le i\le d}\sup_{0\leq t\leq T}|\lambda_i(t)|\geq M\right]\leq e^{-\alpha (M-M_0)^2} \,.
\end{equation}

\item For all $T<\infty$, all $i,j\in\{1,\ldots,d\}$, $i\neq j$,
$$\E\left[\int_0^T\frac{ds}{|\lambda_i(s)-\lambda_j(s)|}\right] <\infty\,.$$
Furthermore, there exists $\alpha, M_0>0$ finite so that for $M\geq M_0$ and 
$i\neq j$, we have $$\P\left[ \int_0^T\frac{ds}{|\lambda_i(s)-\lambda_j(s)|} \geq M \right] \leq e^{-\alpha (M-M_0)^2}\,.$$

\end{itemize}
\end{lemma}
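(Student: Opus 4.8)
The existence and uniqueness of the strong solution is the theorem of C\'epa and L\'epingle \cite{Cepa}, which I take for granted; I also use the fact, implicit in their construction, that almost surely $\int_0^t|\lambda_i(s)-\lambda_j(s)|^{-1}\,ds<\infty$ for all $t$ and all $i\neq j$, so that every drift term below is a.s.\ well defined and the It\^o/integration steps are licit. (Alternatively one may dispense even with this input and localise along $\sigma_R:=\inf\{t:\int_0^t\sum_{k\neq l}|\lambda_l-\lambda_k|^{-1}\,ds\geq R\}$, proving each estimate on $[0,T\wedge\sigma_R]$ uniformly in $R$ and letting $R\to\infty$ by monotone convergence; the $L^1$ bound in the second bullet will in any case follow from the tail estimate proved there.) So the real work is the two displayed tail bounds, and the plan for each is the same: exhibit a functional of the process whose drift has a \emph{definite sign}, integrate it, and read off a Gaussian tail from the reflection principle and \eqref{zx}.

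For the first bullet I would apply It\^o to $f(t):=\sum_{i=1}^d\lambda_i(t)^2$. The interaction contributes $2\beta p\sum_{i\neq j}\lambda_i/(\lambda_i-\lambda_j)$, which, on pairing $(i,j)$ with $(j,i)$, collapses to the constant $\beta p\,d(d-1)$, so that
\begin{equation*}
df(t)=\bigl(2d+\beta p\,d(d-1)-2\gamma f(t)\bigr)\,dt+2\sqrt2\sum_{i=1}^d\lambda_i(t)\,db_t^i .
\end{equation*}
Writing $c:=2d+\beta p\,d(d-1)$ and $N_t:=2\sqrt2\sum_i\int_0^t\lambda_i\,db^i$ (so $\langle N\rangle_t=8\int_0^tf(s)\,ds$), and using $f\geq0$, one gets $f(t)\leq f(0)+ct+N_t$. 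Introduce $\tau_M:=\inf\{t:f(t)\geq M^2\}$: on $\{\tau_M\leq T\}$ one has $f(\tau_M)=M^2$ by continuity and $\langle N\rangle_{t\wedge\tau_M}\leq 8TM^2$, so the Dambis--Dubins--Schwarz representation writes $N_{\cdot\wedge\tau_M}$ as a Brownian motion run for time $\leq 8TM^2$ and hence $\{\tau_M\leq T\}\subseteq\{\sup_{u\leq 8TM^2}\widetilde W_u\geq M^2-f(0)-cT\}$. The reflection principle then bounds $\P[\tau_M\leq T]$ by $2\exp\bigl(-(M^2-f(0)-cT)^2/(16TM^2)\bigr)\leq 2e^{-M^2/(64T)}$ once $M^2\geq 2(f(0)+cT)$. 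Since $\max_i\sup_{t\leq T}|\lambda_i(t)|\geq M$ forces $\sup_{t\leq T}f(t)\geq M^2$, i.e.\ $\tau_M\leq T$, this yields \eqref{zx} after adjusting $\alpha,M_0$ to absorb the factor $2$ and to pass from $M$ to $M-M_0$.

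For the second bullet, I first note that, the eigenvalues being ordered, $\lambda_j(s)-\lambda_i(s)\geq\lambda_{m+1}(s)-\lambda_m(s)=:u_m(s)$ whenever $i\leq m<j$, so it suffices to control $\int_0^Tu_m(s)^{-1}\,ds$ for $m=1,\dots,d-1$. For this I would apply It\^o to the partial sum $s_m(t):=\sum_{l=1}^m\lambda_l(t)$: in $\sum_{l\leq m}\sum_{k\neq l}(\lambda_l-\lambda_k)^{-1}$ the pairs of indices both $\leq m$ cancel by antisymmetry, so the interaction reduces to $-\Sigma_m$ with $\Sigma_m:=\sum_{l\leq m}\sum_{k>m}(\lambda_k-\lambda_l)^{-1}$, all of whose terms are positive (since $\lambda_l\leq\lambda_m<\lambda_{m+1}\leq\lambda_k$), so in particular $\Sigma_m\geq u_m^{-1}$. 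Integrating $ds_m=-\gamma s_m\,dt+\sqrt2\sum_{l\leq m}db_t^l-\beta p\,\Sigma_m\,dt$ gives
\begin{equation*}
\beta p\int_0^t\Sigma_m(s)\,ds=s_m(0)-s_m(t)-\gamma\int_0^ts_m(s)\,ds+\sqrt2\sum_{l\leq m}b_t^l=:G_m(t),
\end{equation*}
whence $\int_0^Tu_m(s)^{-1}\,ds\leq\int_0^T\Sigma_m(s)\,ds=G_m(T)/(\beta p)$. Finally $|G_m(T)|\leq|s_m(0)|+(1+\gamma T)\,m\max_i\sup_{t\leq T}|\lambda_i(t)|+\sqrt2\,\bigl|\sum_{l\leq m}b_T^l\bigr|$, where the first term is a constant, the second has a Gaussian-square tail by \eqref{zx}, and the third is a centred Gaussian; assembling these (and enlarging $M_0$ to swallow the constant and the factor $m$) gives $\P[\int_0^Tu_m^{-1}\geq M]\leq e^{-\alpha(M-M_0)^2}$, hence the same bound for $\int_0^T|\lambda_i-\lambda_j|^{-1}$, and finiteness of the expectation follows by integrating the tail.

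I do not expect a deep difficulty: the two sign computations — the combinatorial constant $\binom d2$ for $f$, and the antisymmetric cancellation for $s_m$ — are the whole point, and the rest is the reflection principle together with \eqref{zx}. The one step needing care is that for $p\beta<1$ the drift of \eqref{theeqlim} is singular, so ``integrate $-\beta p\,\Sigma_m\,dt$'' presupposes $\int_0^t\Sigma_m\,ds<\infty$ a.s.; this is precisely the sort of a priori estimate furnished by \cite{Cepa} (or by the localisation remarked on above), so the argument bootstraps rather than circles.
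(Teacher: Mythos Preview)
Your proof is correct and follows the same overall plan as the paper: apply It\^o to a Lyapunov-type functional whose interaction drift has a definite sign, and convert the resulting martingale bound into a Gaussian tail. The specific choices differ, however. For the first bullet the paper applies It\^o to $\sum_i\phi(\lambda_i)$ with $\phi$ a smooth, bounded-derivative approximation of $|x|$ (e.g.\ $\phi(x)=x^2(1+x^2)^{-1/2}$); because $\phi'$ is bounded, the martingale $\sum_i\int_0^t\phi'(\lambda_i)\,db^i$ has bounded integrand and a standard exponential martingale inequality applies directly. Your choice $f=\sum_i\lambda_i^2$ is in some ways cleaner --- the interaction drift collapses to the exact constant $\beta p\,d(d-1)$, no smoothing needed --- at the cost of an unbounded integrand in the martingale, which you then handle via the stopping time $\tau_M$ and Dambis--Dubins--Schwarz; both routes land on the same Gaussian-square tail. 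For the second bullet the paper integrates the SDE for $\lambda_d$ itself (all terms $(\lambda_d-\lambda_j)^{-1}$ in its drift are positive) to bound $\int_0^T(\lambda_d-\lambda_j)^{-1}\,ds$ for every $j<d$ at once, and then ``continues recursively'' downward through $\lambda_{d-1},\lambda_{d-2},\dots$, each step inheriting one already-controlled term from the previous. Your partial-sum choice $s_m=\sum_{l\leq m}\lambda_l$ packages the same antisymmetric cancellation more symmetrically and delivers every nearest-neighbour gap $u_m$ independently, without recursion. Neither version is materially harder than the other; your localisation remark for the $p\beta<1$ case is a sensible way to justify the It\^o steps and matches the a~priori finiteness supplied by \cite{Cepa} that the paper also invokes.
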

{\bf Proof.}
The existence and unicity of the strong solution is \cite[Proposition 3.2]{Cepa}.

For the first point, we choose a twice continuously differentiable symmetric function $\phi$, increasing on $\mathbb R^+$, 
which approximates smoothly $|x|$ in the neighborhood
of the origin so
that $\phi(0)=0$, $x\phi'(x)\ge 0$, 
 $ |\phi'(x)|\le c$ and $|\phi''(x)|\le c$, whereas 
$|\phi(x)|\ge |x|\times |x|\wedge 1$ (take e.g $\phi(x)=x^2(1+x^2)^{-1/2}$)
to obtain by It\^o's Lemma
\begin{align*}
 d(\phi(\lambda_i(t))) &= - \gamma\lambda_i(t) \phi'(\lambda_i(t)) dt + \sqrt{2} \phi'(\lambda_i(t)) db^i_t \\ 
&+ p \beta \sum_{j\not= i}  \phi'(\lambda_i(t))\frac{dt}{\lambda_i(t)-\lambda_j(t)} +  \phi''(\lambda_i(t)) dt.
\end{align*}
For all $t$, we have $ \lambda_i(t) \phi'(\lambda_i(t)) \geq 0$, and also 
\begin{align*}
\sum_{i=1}^d  \sum_{j\not= i} \frac{ \phi'(\lambda_i(t))}{\lambda_i(t)-\lambda_j(t)}
&=\frac{1}{2} \sum_{i=1}^d  \sum_{j\not= i} \frac{\phi'(\lambda_i(t)) - \phi'(\lambda_j(t))}{\lambda_i(t)-\lambda_j(t)} \leq \frac{d(d-1)}{2} \mid\mid \phi''\mid\mid_\infty\,.
\end{align*}
We deduce from the above arguments that there exists $C>0$ such that 
\begin{align*}
\sum_{i=1}^d\phi(\lambda_i(t)) \leq \sqrt{2}\sum_{i=1}^d \int_0^t \phi'(\lambda_i(s)) db^i_s + C t+\sum_{i=1}^d\phi(\lambda_i) \,.
\end{align*}
By usual martingales inequality,
as $\phi'$ is uniformly bounded we know that, see e.g. \cite[Corollary H.13]{AGZ}, 
$$P\left[\sup_{0\le t\le T} \mid \sum_{i=1}^d\int_0^t\phi'(\lambda_i(t))db_i(t)\mid \ge  M \right] \leq \exp(-\frac{M^2}{2 c T })$$
and therefore using the fact that $|\phi(x)|\ge |x|\times |x|\wedge 1$, we deduce the first point with $M_0=| \sum_{i=1}^d\phi(\lambda_i)|+CT$ and $\alpha=1/2CT$.

For the second point, we first remark as in the proof of \cite[Lemma 3.5]{Cepa} that for all $i<d$ 
\begin{align*}
p \beta  \int_0^T \frac{dt}{\mid\lambda_d(t)-\lambda_i(t)\mid} &\leq  p \beta \sum_{j<d}  \int_0^T \frac{dt}{\mid\lambda_d(t)-\lambda_j(t)\mid} \\  
&= p \beta\sum_{j <d}  \int_0^T \frac{dt}{\lambda_d(t)-\lambda_j(t)} \\  
 &= \lambda_d(T)-\lambda_d(0) - \sqrt{2} b^d_T + \gamma\int_0^T \lambda_d(t)dt \,. 
\end{align*}
so that the first point gives the claim fo $j=d$.
We then continue recursively.
\qed


\subsection{Estimates on collisions}\label{estimate_collisions}
To obtain regularity estimates on the process $\lambda$,
we need to control the probability that more than two particles are close together.
We shall prove, building on an idea from C\'epa and L\'epingle \cite{Cepa2}, that
\begin{lemma}\label{multiple_collisions}
For $r\ge 3$ and $I\subset\{1,\ldots, d\}$ with $|I|=r$, set
$$S^I_t=\sum_{i,j\in I} (\lambda_i(t)-\lambda_j(t))^2\,.$$
We let, for $\varepsilon>0$,
$$\tau^r_\varepsilon:=\inf\{t\ge 0: \min_{|I|=r} S^I_t\le \varepsilon\}$$

Then, for any $T>0$ and $\eta>0$, for any $r\ge 3$
 there exists $\varepsilon_r>0$ which only depends
on $\{S^I_0, |I|\ge 3\}$ so that
$$\bP\left( \tau^r_{\varepsilon_r}\le T\right)\le \eta\,.$$
\end{lemma}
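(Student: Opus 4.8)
The plan is to apply It\^o's formula to $S^I_t$ for a fixed subset $I$ with $|I|=r\ge 3$, and show that $\log S^I_t$ (or a suitable function of $S^I_t$) is a supermartingale plus a controllable drift whenever the $r$ particles are all close together, so that $S^I_t$ cannot get too small too early. The key computation is the following: writing $S^I_t=\sum_{i,j\in I}(\lambda_i-\lambda_j)^2 = 2r\sum_{i\in I}\lambda_i^2 - 2\big(\sum_{i\in I}\lambda_i\big)^2$, one finds from \eqref{theeqlim} that the martingale part of $S^I_t$ has quadratic variation comparable to $S^I_t$ itself (it is $c\,S^I_t\,dt$ for an explicit constant $c$ depending on $r$), while the finite-variation part contains the crucial singular term coming from the interaction. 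When all indices $i,j\in I$ are mutually close, the dominant contribution to the drift of $S^I_t$ is
\begin{align*}
2p\beta\sum_{i,j\in I,\, i\neq j}\frac{\lambda_i-\lambda_j}{\lambda_i-\lambda_j} = 2p\beta\, r(r-1),
\end{align*}
which is a \emph{positive} constant, pushing $S^I_t$ away from $0$; the interactions between $I$ and $I^c$, together with the $-\gamma\lambda_i$ terms, contribute a drift bounded in absolute value by $C(1+\sup_i|\lambda_i|/\operatorname{dist}(I,I^c))$, and in the regime near a multiple collision one can absorb or control these using Lemma \ref{lemCepa}.

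Concretely, I would introduce a stopping time $\sigma$ at which either $\sup_{0\le t\le T}\max_i|\lambda_i(t)|$ exceeds a large constant $M$, or at which some particle outside $I$ comes within distance $\rho$ of a particle inside $I$; by Lemma \ref{lemCepa} (the Gaussian tail bound \eqref{zx} and the integrability of $\int_0^T ds/|\lambda_i-\lambda_j|$) the first event has small probability for $M$ large, and the structure of the argument is recursive in $r$: for the largest gap one separates off and applies the bound to a set of size $r-1$, controlling the probability that particles from $I^c$ intrude. On the event before $\sigma$, applying It\^o to $F(S^I_t):=-\log S^I_t$ gives
\begin{align*}
dF(S^I_t) = -\frac{dS^I_t}{S^I_t} + \frac{1}{2}\frac{d\langle S^I\rangle_t}{(S^I_t)^2},
\end{align*}
and since $d\langle S^I\rangle_t \le C_r\,(S^I_t)\cdot S^I_t\,dt$ is \emph{not} what happens — rather $d\langle S^I\rangle_t$ is of order $S^I_t\,dt$ while the leading drift $-dS^I_t/S^I_t$ behaves like $-2p\beta r(r-1)/S^I_t\,dt$, which blows up to $-\infty$ as $S^I_t\to 0$ — the process $F(S^I_t)$ is pushed down (i.e. $S^I_t$ is pushed up) near a collision, and the diffusion coefficient of $F(S^I_t)$ is of order $1/S^I_t$, exactly the Bessel-like regime in which the origin is polar. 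Following C\'epa--L\'epingle \cite{Cepa2}, one chooses the exponent more carefully (e.g. looks at $(S^I_t)^{-\kappa}$ for small $\kappa>0$, or uses a comparison with a squared Bessel process of dimension $>2$) so that it becomes a genuine positive supermartingale up to $\sigma$, whence by the maximum inequality $\bP(\tau^r_{\varepsilon_r}\le T\wedge\sigma)$ is bounded by $(\varepsilon_r/\varepsilon_0)^{\kappa}$ times a constant depending on $S^I_0$, which is $\le\eta/2$ for $\varepsilon_r$ small; combining with $\bP(\sigma<T)\le\eta/2$ and a union bound over the finitely many $I$ with $|I|=r$ finishes the proof.

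The main obstacle is making the ``near a multiple collision the dominant drift is a favorable positive constant'' heuristic rigorous \emph{uniformly}, because the set $I$ of close particles is not fixed in advance and particles from $I^c$ may drift in and out; handling this requires the recursion on $r$ (so that one may assume sets of size $r-1$ stay separated, controlling the $I$–$I^c$ interaction) together with the a priori bounds of Lemma \ref{lemCepa}, and choosing the right convex function of $S^I_t$ (the exponent $\kappa$) so that the singular interaction drift genuinely dominates the square of the martingale bracket — this is precisely the delicate point where the dimension-counting $r\ge 3$ enters, since for $r=2$ the analogous quantity can actually reach zero when $p\beta<1$.
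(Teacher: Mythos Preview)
Your proposal has the right skeleton—It\^o's formula applied to $S^I_t$, the positive internal drift $\sim 2p\beta r(r-1)$, the observation that the bracket is of order $S^I_t\,dt$ so that $\log S^I_t$ (or a negative power) behaves like a Bessel of dimension $>2$, and a recursion in $r$ to control the $I$--$I^c$ cross terms via Lemma~\ref{lemCepa}. This is exactly the paper's approach.

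However, you have the recursion running in the wrong direction. You write ``applies the bound to a set of size $r-1$''; the paper's induction (following C\'epa--L\'epingle) goes \emph{downward}, from $r=d$ to $r=3$. The reason is the base case: when $r=d$ there is no $I^c$, hence no cross terms at all, and $S_t$ satisfies a clean closed SDE of squared-Bessel type for which the explicit power $\rho_t^\alpha$ with $\alpha=2-(d-1)(1+p\beta d/2)<0$ is a local submartingale, giving the bound directly. For the inductive step $r+1\Rightarrow r$, the cross term in the equation for $\log S^I_t$ involves $\sum_{j,k\in I}\sum_{l\notin I}(\lambda_j-\lambda_k)S_t^{-1}\big[(\lambda_j-\lambda_l)^{-1}-(\lambda_k-\lambda_l)^{-1}\big]$; the paper splits time according to whether this is dominated by $1/S^I_t$ or not. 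On the bad set, some $l\notin I$ must be close to $I$, which forces $S^{I\cup\{l\}}_t\lesssim S^I_t$—and this is a set of size $r+1$, so the already-established bound $\bP(\tau^{r+1}_{\varepsilon_{r+1}}\le T)\le\eta/2$ controls it. Your ``separate off at the largest gap, reduce to $r-1$'' does not furnish this control: knowing that $r-1$ particles stay apart says nothing about whether an outside particle can approach the cluster $I$. If you try to start the induction at $r=3$ you face the maximal number of cross terms with no mechanism to bound them.

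One further detail: your stopping time $\sigma$ involving the event ``some particle outside $I$ comes within distance $\rho$ of $I$'' is morally right, but you then need to bound $\bP(\sigma<T)$, and this is precisely the $r+1$ event above—so once you flip the induction direction your outline becomes essentially the paper's proof.
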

{\bf Proof.}
The proof is done by induction over $r$ and we start with the
case $r=d$, $I=\{1,\ldots,d\}$. Then, $S$ verifies the following SDE (see e.g. \cite[Theorem 1]{Cepa2}):
$$dS_t = -2\gamma S_t dt + 4 \sqrt{d} \sqrt{S_t} d\beta_t + a dt$$
where $\beta_t$ is a a standard brownian motion and $a=2d(d-1)(2+p\beta d)$.
The square root of $\rho_t:=\sqrt{S_t}$ verifies the SDE
$$d\rho_t = -\gamma \rho_t dt + 2\sqrt{d}\, d\beta_t + (\frac{a}{2}-2d) \frac{dt}{\rho_t}. $$
In particular, one can check that, if $\alpha=2-\frac{a}{4d} = 2 - (d-1)(1+p\beta d /2)$
$$d\rho_t^\alpha = - \alpha \gamma \rho_t^\alpha dt + 2\sqrt{d} \, \alpha \rho_t^{\alpha-1} d\beta_t. $$
Thus, as $\alpha<0$ for $d\ge 3$, for any $\varepsilon>0$, $\rho^{\alpha-1}_{t\wedge \tau_\varepsilon^d}$
is bounded so that $\int_0^. \rho^{\alpha-1}_{s\wedge \tau_\varepsilon^d} d\beta_s$ is a  martingale
and therefore
$$\E[\rho_{T\wedge \tau^d_\varepsilon}^\alpha] \le  \rho_0^\alpha - \alpha \gamma \int_0^T \E[\rho_{t\wedge \tau^d_\varepsilon}^\alpha] dt$$
By Gronwall's lemma, since $\sup_t \E[\rho_{t\wedge \tau^d_\varepsilon}^\alpha]$ is finite,  we deduce that
\begin{equation*}
\E[\rho_{T\wedge\tau_\varepsilon^d}^\alpha]  \le  \rho_0^\alpha (1-\frac{1}{\alpha \gamma}) e^{-\alpha \gamma T} + \frac{\rho_0^\alpha}{\alpha \gamma}.
\end{equation*}  
As a consequence, since  $\alpha <0$, we have
\begin{align*}
\varepsilon^{\alpha/2} \P(\tau^d_\varepsilon\le T) \leq \E[ S_{T\wedge \tau^d_\varepsilon}^{\alpha/2}] = 
\E[\rho_{T\wedge \tau^d_\varepsilon}^{\alpha}] 
\leq \rho_0^\alpha (1-\frac{1}{\alpha \gamma}) e^{-\alpha \gamma T} + \frac{\rho_0^\alpha}{\alpha \gamma}\,.\\ 
\end{align*} 
We can take $\varepsilon$ small enough to obtain the claim for $r=d$.

We next assume that we have proved the claim for $u\geq r+1$
and choose $\varepsilon_{r+1}$ so that the probability that the hitting
time is smaller than $T$ is smaller than $\eta/2$.
We can choose $I$ to be connected without loss of generality as the $\lambda^i$
are ordered.
We let $R=\min\{\tau^I_\varepsilon, \tau^{r+1}_{\varepsilon_{r+1}}\}$
when $\tau^I_\varepsilon$ is the first time where $S^I$ reaches $\varepsilon$.
Again following \cite{Cepa2}, we have
\begin{eqnarray}
\log S^I_{T\wedge R}&=&\log S^I_0 - 2 \gamma T +4\sqrt{2}\sum_{k,j\in I}\int_0^{T\wedge R}
\frac{\lambda_j(t)-\lambda_k(t)}{S^I_t} db^j_t \nonumber\\
&&+2\beta p\sum_{j,k\in I}\sum_{l\notin I}\int_0^{T\wedge R}\frac{\lambda_j(t)-\lambda_k(t)}{S^I_t}[\frac{1}{\lambda_j(t)-\lambda_l(t)}-\frac{1}{\lambda_k(t)-\lambda_l(t)}] dt\nonumber\\
&&+4r[(r-1)(\frac{p\beta}{2} r+1)-2]\int_0^{T\wedge R} \frac{dt}{S^I_t}\label{borning}
\end{eqnarray}
Note that $M_t=4\sqrt{2}\sum_{k,j\in I}\int_0^{t\wedge R}
\frac{\lambda_j(s)-\lambda_k(s)}{S^I_s} db^j_s $ is a martingale 
with bracket $A_t=16 r\int_0^{t\wedge R}\frac{ds}{S^I_s}$. 
For $r\ge 3$, $4r[(r-1)(r p\beta/2 +1)-2]\ge 2 p \beta>0$ and therefore we deduce
\begin{align*}
\E[ \log &S^I_{T\wedge R}] \geq \, \log S^I_0 -2\gamma T +2 \beta p\E\left[ \int_0^{T\wedge R} \frac{dt}{S^I_t}\right] \\
&+ \E\left[ 2\beta p\sum_{j,k\in I}\sum_{l\notin I}\int_0^{T\wedge R}\frac{\lambda_j(t)-\lambda_k(t)}{S^I_t}[\frac{1}{\lambda_j(t)-\lambda_l(t)}-\frac{1}{\lambda_k(t)-\lambda_l(t)}] dt\right]\\
\end{align*}
For $j,k\in I$, we cut the last integral over times 
$$\Omega_{j,k}=\{ t\le T\wedge R : \sum_{l\notin I}\frac{1}{\lambda_j(t)-\lambda_l(t)}\frac{1}{\lambda_k(t)-\lambda_l(t)}\le \frac{1}{S^I_t}\}$$
so that 
$$-\sum_{j,k\in I}
\int_{\Omega_{j,k}}\frac{(\lambda_j(t)-\lambda_k(t))^2}{S^I_t}\sum_{l\notin I}
[\frac{1}{(\lambda_j(t)-\lambda_l(t))(\lambda_k(t)-\lambda_l(t))}] dt\ge 
-\int_0^{T\wedge R} \frac{dt}{S^I_t}$$
This term will therefore be compensated by the third term in \eqref{borning}.
For the remaining term, if  $l\notin I$ is such that 
$\min_{i\in I} |\lambda_l-\lambda_i|\le \min_{i\in I} |\lambda_k-\lambda_i|$
for all $k\notin I$ then if $t\in \Omega_{j,k}^c$ and $i^*\in I$ is 
so that $\min_{i\in I} |\lambda_l-\lambda_i|=|\lambda_l-\lambda_{i^*}|$,
we get

$$ \frac{d-r}{(\lambda_l(t)-\lambda_{i^*}(t))^2}\ge \frac{1}{S^I_t}$$
and therefore on $\tau^{r+1}_{\varepsilon_{r+1}}\ge t$,
$$\varepsilon_{r+1}\le  S^I_{t}+\sum_{j\in I}
(\lambda_j(t)-\lambda_l(t))^2\le  S^I_t +2r (\lambda_{i*}(t)-\lambda_l(t))^2 
+2S^I_t \le (3+2r(d-r)) S^I_t\,.$$
As a consequence,
 we have the bound  for all  $j,k\in I$, all $t\in \Omega_{j,k}^c$, $t\le R$, 
$$\frac{\lambda_j(t)-\lambda_k(t)}{S_t^I} \geq -1/\sqrt{S^I_t}\geq -\sqrt{3+2r(d-r)}/\sqrt{\varepsilon_{r+1}}$$ 
which entails the existence of a finite constant $c$ so that
\begin{align*}
\sum_{j,k\in I}\sum_{l\notin I}
\int_{\Omega_{j,k}^c}\frac{\lambda_j(t)-\lambda_k(t)}{S^I_t}[&\frac{1}{\lambda_j(t)-\lambda_l(t)}-\frac{1}{\lambda_k(t)-\lambda_l(t)}] dt \\
&\geq - \frac{c}{\sqrt{\varepsilon_{r+1}}} \sum_{i\in I}\sum_{l\notin I}
\int_0^T \frac{dt}{\mid \lambda_i(t)-\lambda_l(t)\mid}\,.
\end{align*}
Using Lemma \ref{lemCepa} we hence conclude that there exists  a universal finite
constant $c'$ depending only on $T$ so that
\be
\E[\log S^I_{T\wedge R}] \geq \log S^I_0-2\gamma T  -\frac{c'}{\sqrt{\varepsilon_{r+1}}}\,.
\ee
On the other hand, we have 

$$\bE[\log S^I_{T\wedge R}]\leq \bP(\tau^I_\varepsilon \le T) \log(\varepsilon) +\bE[ \sup_{0\le t\le T} \log S^I_t]$$
where the last term is bounded above by \eqref{zx}.
We deduce that

$$\bP(\tau^I_\varepsilon\le T)\le \frac{|\log S^I_0|}{|\log(\varepsilon)|}+
\frac{ c''}{\sqrt{\varepsilon_{r+1}}|\log(\varepsilon)|} + \frac{c}{\mid\log(\varepsilon)\mid} + \frac{2\gamma T}{\mid \log(\varepsilon)\mid} \,.$$
We finally choose $\varepsilon$ small enough so that the right hand side is smaller than $\eta/2$ to conclude.
\qed

We next show that not only collisions of three particles are rare but also two collisions of different particles rarely happen around the same time.

\begin{lemma}\label{2_Couples}
For all $i,j$ such that $ i+1 < j$, set $$\tau_{\varepsilon'}^{ij} = \inf\{t\geq 0: (\lambda_i(t)-\lambda_{i-1}(t))^2 + (\lambda_j(t)-\lambda_{j-1}(t))^2 \leq \varepsilon' \} .$$
Then, for any $T>0$ and $\eta>0$, there exists $\varepsilon'$ 
such that $$\P\left[\tau_{\varepsilon'}^{ij} \leq T\right] \leq \eta.$$ 
\end{lemma}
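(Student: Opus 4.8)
Fix $T,\eta>0$ and write $P_t:=(\lambda_i(t)-\lambda_{i-1}(t))^2$, $Q_t:=(\lambda_j(t)-\lambda_{j-1}(t))^2$ and $\Sigma_t:=P_t+Q_t$, so that $\tau_{\varepsilon'}^{ij}=\inf\{t\geq 0:\Sigma_t\leq\varepsilon'\}$. The structural point to exploit is that, because $i+1<j$, the index sets $\{i-1,i\}$ and $\{j-1,j\}$ are \emph{disjoint}, so the martingale parts of $P$ and of $Q$ are driven by disjoint families of Brownian motions, hence orthogonal. Morally this makes $\Sigma$, up to lower order drift, a squared Bessel process of dimension $2+2p\beta\geq 2$ — the sum of two \emph{independent} squared‑Bessel‑type gaps of dimension $1+p\beta$ each — so that it stays away from $0$. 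The plan is to turn this into a bound on $\bE[-\log\Sigma_{t\wedge R}]$ that is uniform in $\varepsilon'$, for a suitable stopping time $R$, in the spirit of the end of the proof of Lemma \ref{multiple_collisions}, and then read off a bound on $\bP(\tau_{\varepsilon'}^{ij}\leq T)$.

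Concretely, I would first apply It\^o's formula to \eqref{theeqlim}. Writing $\Gamma_t^{(i)}:=\sum_{k\neq i-1,i}\bigl[(\lambda_i(t)-\lambda_k(t))(\lambda_{i-1}(t)-\lambda_k(t))\bigr]^{-1}\geq 0$ for the external repulsion felt by the pair $\{i-1,i\}$, and $\Gamma^{(j)}$ for its analogue, one obtains
\begin{align*}
d\Sigma_t=\bigl[-2\gamma\Sigma_t+8(1+p\beta)-2p\beta\bigl(P_t\Gamma_t^{(i)}+Q_t\Gamma_t^{(j)}\bigr)\bigr]\,dt+dN_t,\qquad d\langle N\rangle_t=16\,\Sigma_t\,dt,
\end{align*}
the crucial feature being that $d\langle N\rangle_t$ carries no cross‑bracket (coefficient exactly $16\Sigma_t$), which is precisely where $i+1<j$ enters. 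Applying It\^o once more to $x\mapsto-\log x$ on $[0,R]$ (where $\Sigma$ will be bounded below), the $\Sigma^{-1}$‑drift terms combine into $\bigl(-8(1+p\beta)+8\bigr)\Sigma_t^{-1}=-8p\beta\,\Sigma_t^{-1}\leq 0$; dropping it and using $P_t,Q_t\leq\Sigma_t$ yields
\begin{align*}
-\log\Sigma_{t\wedge R}\leq-\log\Sigma_0+\int_0^{t\wedge R}\bigl[2\gamma+2p\beta\bigl(\Gamma_s^{(i)}+\Gamma_s^{(j)}\bigr)\bigr]\,ds-\int_0^{t\wedge R}\Sigma_s^{-1}\,dN_s,
\end{align*}
where the last term is a genuine martingale once $R\leq\tau_{\varepsilon'}^{ij}$ (then $\Sigma^{-1}\leq\varepsilon'^{-1}$ on $[0,R]$).

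Next, using Lemma \ref{multiple_collisions} with $r=3$, I would fix $\varepsilon_3>0$ (depending only on $T,\eta$ and the initial configuration) with $\bP(\tau^3_{\varepsilon_3}\leq T)\leq\eta/2$ and set $R:=T\wedge\tau_{\varepsilon'}^{ij}\wedge\tau^3_{\varepsilon_3}$. On $[0,R]$ no three particles are $\varepsilon_3$‑close; for each $k\neq i-1,i$, with $c=|\lambda_i-\lambda_{i-1}|$ and $m_k=\min(|\lambda_i-\lambda_k|,|\lambda_{i-1}-\lambda_k|)$ (so that the larger of the two equals $m_k+c$ since the $\lambda$'s are ordered), the bound $S_s^{\{i-1,i,k\}}\geq\varepsilon_3$ forces $c^2+m_k(m_k+c)\geq\varepsilon_3/4$, which gives the pointwise estimate
\begin{align*}
\Gamma_s^{(i)}\leq\frac{C d}{\varepsilon_3}+\frac{C}{\sqrt{\varepsilon_3}}\sum_{k\neq i-1,i}\Bigl(\frac{1}{|\lambda_i(s)-\lambda_k(s)|}+\frac{1}{|\lambda_{i-1}(s)-\lambda_k(s)|}\Bigr)\qquad\text{on }[0,R],
\end{align*}
and similarly for $\Gamma^{(j)}$. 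Taking expectations and invoking Lemma \ref{lemCepa} (integrability of $\int_0^T|\lambda_a-\lambda_b|^{-1}\,ds$), one gets $\bE\int_0^R(\Gamma_s^{(i)}+\Gamma_s^{(j)})\,ds\leq C(T,\varepsilon_3,d)<\infty$, a bound independent of $\varepsilon'$; plugging this back into the previous display and bounding the negative part of $-\log\Sigma_{T\wedge R}$ by $\bE[\sup_{t\leq T}(\log\Sigma_t)^+]<\infty$ (which follows from \eqref{zx}) yields $\bE[-\log\Sigma_{T\wedge R}]\leq K$ with $K$ independent of $\varepsilon'$.

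Finally, on $\{\tau_{\varepsilon'}^{ij}\leq T\}\cap\{\tau^3_{\varepsilon_3}>T\}$ one has $R=\tau_{\varepsilon'}^{ij}$, hence $\Sigma_{T\wedge R}=\varepsilon'$, so (for $\varepsilon'<1$) $|\log\varepsilon'|\,\bP\bigl(\tau_{\varepsilon'}^{ij}\leq T,\ \tau^3_{\varepsilon_3}>T\bigr)\leq\bE[-\log\Sigma_{T\wedge R}]+\bE[\sup_{t\leq T}(\log\Sigma_t)^+]\leq K'$, whence $\bP(\tau_{\varepsilon'}^{ij}\leq T)\leq K'/|\log\varepsilon'|+\eta/2$; choosing $\varepsilon'$ small enough that $K'/|\log\varepsilon'|\leq\eta/2$ concludes. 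The step I expect to be the real obstacle is precisely this $\varepsilon'$‑uniform taming of the external repulsions $\Gamma^{(i)},\Gamma^{(j)}$: they blow up along binary collisions of the neighbouring pairs $\{i,i+1\}$, $\{i-2,i-1\}$, etc., and keeping them integrable with a constant free of $\varepsilon'$ is exactly what forces the combined use of Lemma \ref{multiple_collisions} (to keep a third particle away from $\{i-1,i\}$ or $\{j-1,j\}$ whenever the corresponding gap is small) and of the inverse‑gap $L^1$ bounds of Lemma \ref{lemCepa}.
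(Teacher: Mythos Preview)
Your argument is correct, but it follows a different path from the paper's proof. Both proofs start from the same It\^o formula for $\Sigma_t=X_t$ and both localise with the triple--collision stopping time $\tau^3_\varepsilon$ furnished by Lemma~\ref{multiple_collisions}. The paper, however, applies It\^o--Tanaka to the convex function $x\mapsto\min(x,\varepsilon/4)^{-p\beta}$: the truncation at $\varepsilon/4$ ensures that, on $\{t\leq\tau^3_\varepsilon,\ X_t\leq\varepsilon/4\}$, the external repulsions $\Gamma^{(i)},\Gamma^{(j)}$ are \emph{pointwise} bounded by $C/\varepsilon$, so a direct Gronwall argument yields $\bE[Y_{T\wedge\tau_{\varepsilon'}^{ij}\wedge\tau^3_\varepsilon}^{-p\beta}]\leq C(\varepsilon,T)$ and hence the polynomial estimate $\bP(\tau_{\varepsilon'}^{ij}\leq T\wedge\tau^3_\varepsilon)\leq C(\varepsilon')^{p\beta}$. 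Your choice of the Lyapunov function $-\log\Sigma$ avoids the truncation and the Tanaka formula altogether; instead, you control $\int_0^R\Gamma^{(i)}_s\,ds$ only in $L^1$, by combining the triple--collision bound with the inverse--gap integrability of Lemma~\ref{lemCepa}. This is precisely the device used in the inductive step ($r<d$) of the proof of Lemma~\ref{multiple_collisions}, so in a sense your argument is more homogeneous with that lemma. The price you pay is a weaker, logarithmic decay $\bP\leq K'/|\log\varepsilon'|$ instead of the paper's polynomial rate, which is harmless for the qualitative statement of the lemma but would matter if a quantitative rate were needed later.
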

{\bf Proof.}
Using It\^o's formula, it is easy to see that 
\begin{align*}
  d&\left(  (\lambda_i-\lambda_{i-1})^2 +  (\lambda_j - \lambda_{j-1})^2 \right) = 8(1+p\beta) dt \\ 
  &-2 \gamma \left[ (\lambda_i - \lambda_{i-1})^2 + (\lambda_j - \lambda_{j-1})^2 \right]dt      \\ 
  &+ 2\sqrt{2} \left[ (\lambda_i-\lambda_{i-1}) (db^i_t - db^{i-1}_t) + (\lambda_j - \lambda_{j-1}) (db^j_t - db^{j-1}_t )  \right] \\
  &-2p \beta \left[\sum_{k\not= i-1,i} \frac{(\lambda_i-\lambda_{i-1})^2}{(\lambda_i-\lambda_k)(\lambda_{i-1}-\lambda_k)}
  + \sum_{k\not= j-1,j} \frac{(\lambda_j-\lambda_{j-1})^2}{(\lambda_j-\lambda_k)(\lambda_{j-1}-\lambda_k)} \right]dt\,.
 \end{align*}
Set $X_t := (\lambda_i(t)-\lambda_{i-1}(t))^2 +  (\lambda_j(t) - \lambda_{j-1}(t))^2$ and note that the quadratic variation 
of $$ \int_0^t \frac{(\lambda_i-\lambda_{i-1}) (db^i_s - db^{i-1}_s) + (\lambda_j - \lambda_{j-1}) (db^j_s - db^{j-1}_s )}{\sqrt{X_s}} $$
is $2t$. Thus there exists a standard Brownian motion $B$ so that
\begin{align*}
dX_t &= 8 (1+p\beta) dt - 2\gamma X_t dt +4 \sqrt{X_t} dB_t \\  
&-2p \beta \left[\sum_{k\not= i-1,i} \frac{(\lambda_i-\lambda_{i-1})^2}{(\lambda_i-\lambda_k)(\lambda_{i-1}-\lambda_k)}
  + \sum_{k\not= j-1,j} \frac{(\lambda_j-\lambda_{j-1})^2}{(\lambda_j-\lambda_k)(\lambda_{j-1}-\lambda_k)} \right]dt\,.
\end{align*}
Note that, by the previous Lemma \ref{multiple_collisions},  we can choose $\varepsilon$ such that 
\begin{equation}\label{lemmeavant}
\P[\tau_\varepsilon^3 < T] \leq \frac{\eta}{2}.
\end{equation}
Moreover, for all $t\leq \tau_\varepsilon^3$ such that
 $X_t \leq \varepsilon/4$, we have for all $k \neq  i-1,i$,
$$(\lambda_i-\lambda_k)(\lambda_{i-1}-\lambda_k)(t) \geq  \frac{\varepsilon}{8}.$$
The same property holds for $j$.
To finish the proof, 
we will use the fact that the sum in the last term is bounded for all $t\leq \tau_\varepsilon^3$ such that $X_t \leq \varepsilon/4$. 
We thus need to introduce the process $Y_t$ 
defined by $Y_t = \min(X_t,\frac{\varepsilon}{4})$.
Let us set $f(x):=\min(x,\varepsilon/4)^{-p\beta}$. Note that $f$ is a convex function $\R_+ \rightarrow \R_+$ and that the left-hand derivative of $f$ is given by $$f'_-(x)=-p\beta x^{-p\beta-1} 1_{\{x\leq \frac{\varepsilon}{4}\}}.$$ 
Its second derivative in the sense of distributions is the positive measure 
$$f''(dx)=  p\beta \left(\frac{\varepsilon}{4}\right)^{-p\beta-1}  \delta_{\frac{\varepsilon}{4}} + \frac{p\beta(p\beta+1)}{x^{p\beta+2}} 1_{\{x\leq \frac{\varepsilon}{4}\}}\,dx\,.$$
Thus, by  It\^o-Tanaka formula, see e.g. \cite[Theorem 6.22]{karatzas},
 we have
\begin{align*}
Y_t^{-p\beta}& = Y_0^{-p\beta} - p \beta \int_0^t X_s^{-p\beta-1} 1_{\{X_s \leq \frac{\varepsilon}{4} \}} dX_s \\ 
&+ \frac{1}{2} 
\left(p \beta \left( \frac{\varepsilon}{4} \right)^{-p\beta-1} L_t^{\frac{\varepsilon}{4}}(X) + \int_0^{\frac{\varepsilon}{4}} \frac{p\beta(p\beta+1)}{x^{p\beta+2}} L_t^{x}(X) dx\right)\,,
\end{align*}
where $L_t^x(X)$ is the local time of $X$ in $x$.  
By definition we have  $$  \int_0^{\frac{\varepsilon}{4}} \frac{p\beta(p\beta+1)}{x^{p\beta+2}} L_t^{x}(X) dx = \int_0^t \frac{p\beta(p\beta+1)}{X_s^{p\beta+2}} 
1_{\{X_s\leq \frac{\varepsilon}{4}\}} d\langle X,X\rangle_s,$$ 
and thus, we obtain
\begin{align}\label{EDS_Y}
&Y_t^{-p\beta} =  Y_0^{-p\beta} + \int_0^t 1_{\{X_s \leq \frac{\varepsilon}{4} \}} \left(p\beta \gamma Y_s^{-p\beta} dt + 4 Y_s^{-p\beta-\frac{1}{2}} dB_s \right)\\
&+ 2 p^2\beta^2\int_0^t Y_s^{-p\beta-1}  \Bigg[\sum_{k\not= i-1,i} \frac{((\lambda_i-\lambda_{i-1})(s))^2}{((\lambda_i-\lambda_k)(s))((\lambda_{i-1}-\lambda_k)(s))}\notag \\
&+ \sum_{k\not= j-1,j} \frac{((\lambda_j-\lambda_{j-1})(s))^2}{((\lambda_j-\lambda_k)(s))((\lambda_{j-1}-\lambda_k)(s))} \Bigg]1_{X_s\le \varepsilon/4}
ds\notag + \frac{1}{2} 
p \beta \left( \frac{\varepsilon}{4} \right)^{-p\beta-1} L_t^{\frac{\varepsilon}{4}}(X) \notag\,.
\end{align} The definition of local time implies that, almost surely, $L_t^x(X)\leq t$.
We thus deduce from \eqref{EDS_Y} that 
\begin{align*}
\E\left[Y_{T\wedge \tau_{\varepsilon'}^{ij}\wedge \tau_{\varepsilon}^3}^{-p\beta} \right] \leq Y_0^{-p\beta} + \frac{1}{2} 
p \beta \left( \frac{\varepsilon}{4} \right)^{-p\beta-1} T +C
\int_0^T \E\left[Y_{t\wedge \tau_{\varepsilon'}^{ij}}^{-p\beta}\right]\,dt\,.
\end{align*}
with $C= (p\beta\gamma+4 p^2 \beta^2 (d-1)\frac{8}{\varepsilon})$.
Gronwall's Lemma implies that 
\begin{align}\label{E[Y^-pbeta]}
\E\left[Y_{T\wedge \tau_{\varepsilon'}^{ij}\wedge \tau_{\varepsilon}^3}^{-p\beta} \right] \leq \left(Y_0^{-p\beta} + \frac{1}{2} 
p \beta \left( \frac{\varepsilon}{4} \right)^{-p\beta-1} T\right)\exp(C T).
\end{align}
If $\varepsilon' < \varepsilon/4$, equation \eqref{E[Y^-pbeta]} implies that
\begin{equation}
(\varepsilon')^{-p\beta} \P\left[\tau_{\varepsilon'}^{ij} \leq T\wedge \tau_\varepsilon^3 \right] \leq Y_0^{-p\beta} \exp(C T),
\end{equation}
Taking $\varepsilon'$ small enough gives the result with \eqref{lemmeavant}. 
\qed

As a direct consequence, 
we deduce the  uniqueness  of the $i^*$ of Proposition \ref{first_coll_singularity}.
\begin{lemma}\label{unicity_i}
With the same notations as in the previous Lemma \ref{2_Couples}, we have almost surely
\begin{equation*}
\inf_{(k,\ell): k+1< \ell} \tau^{k\ell}_0 = + \infty.
\end{equation*}
In particular, this gives the unicity of the $i^*$ in Proposition \ref{first_coll_singularity}. 
\end{lemma}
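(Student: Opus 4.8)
The plan is to deduce the almost sure statement from Lemma \ref{2_Couples} by a simple monotonicity-in-$\varepsilon'$ argument, and then to read off the uniqueness of $i^*$ by combining it with Lemma \ref{multiple_collisions}.

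First I would fix a pair $(k,\ell)$ with $k+1<\ell$ and set $X_t^{k\ell}:=(\lambda_k(t)-\lambda_{k-1}(t))^2+(\lambda_\ell(t)-\lambda_{\ell-1}(t))^2$. This process is continuous and $X_0^{k\ell}>0$ since the initial eigenvalues satisfy $\lambda_1<\cdots<\lambda_d$. Hence, for any $\varepsilon'\in(0,X_0^{k\ell})$, the path of $X^{k\ell}$ must cross the level $\varepsilon'$ before it can reach $0$, so $\tau_{\varepsilon'}^{k\ell}\leq \tau_0^{k\ell}$ and therefore $\{\tau_0^{k\ell}\leq T\}\subseteq\{\tau_{\varepsilon'}^{k\ell}\leq T\}$ for every $T>0$. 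Now fix $T<\infty$ and $\eta>0$: Lemma \ref{2_Couples} provides an $\varepsilon'$ with $\P[\tau_{\varepsilon'}^{k\ell}\leq T]\leq \eta$, and since $\tau_{\varepsilon'}^{k\ell}$ is nondecreasing as $\varepsilon'$ decreases we may assume $\varepsilon'<X_0^{k\ell}$. Consequently $\P[\tau_0^{k\ell}\leq T]\leq \eta$; letting $\eta\downarrow 0$ gives $\P[\tau_0^{k\ell}\leq T]=0$, and then letting $T\to\infty$ along the integers gives $\P[\tau_0^{k\ell}<\infty]=0$. As there are only finitely many pairs $(k,\ell)$ with $k+1<\ell$, a union bound yields $\inf_{(k,\ell):k+1<\ell}\tau_0^{k\ell}=+\infty$ almost surely, which is the claimed identity.

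For the uniqueness of $i^*$ in Proposition \ref{first_coll_singularity}, suppose toward a contradiction that at $T_1$ there are two distinct indices $i^*,i^{**}\in\{2,\dots,d\}$ with $\lambda_{i^*}(T_1)=\lambda_{i^*-1}(T_1)$ and $\lambda_{i^{**}}(T_1)=\lambda_{i^{**}-1}(T_1)$. If $|i^*-i^{**}|\geq 2$, then up to relabeling $i^*+1<i^{**}$, so $X_{T_1}^{i^*i^{**}}=0$ and $\tau_0^{i^*i^{**}}\leq T_1<\infty$, contradicting the statement just proved. If $|i^*-i^{**}|=1$, then three consecutive eigenvalues coincide at $T_1$, i.e. $S^I_{T_1}=0$ for the three-element set $I$ of the relevant consecutive indices; running the identical crossing argument with the continuous process $t\mapsto\min_{|I|=3}S^I_t$ (which starts strictly positive) and with Lemma \ref{multiple_collisions} in place of Lemma \ref{2_Couples} gives $\P[\tau^3_0<\infty]=0$, again a contradiction. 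Hence $i^*$ is almost surely unique.

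The whole argument is bookkeeping on top of Lemmas \ref{2_Couples} and \ref{multiple_collisions}; the only point requiring a little care is the passage from ``$\P[\tau_{\varepsilon'}^{k\ell}\leq T]$ small'' to ``$\P[\tau_0^{k\ell}\leq T]=0$'', which rests on the continuity of the squared-gap process together with the strict positivity of its initial value, so that the level $\varepsilon'$ is necessarily visited before the gap closes. Once that monotonicity-in-$\varepsilon'$ observation is recorded there is no real obstacle, and in particular no further stochastic analysis is needed beyond what was already used to establish the two preceding lemmas. \qed
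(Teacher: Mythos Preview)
Your proof is correct and follows essentially the same route as the paper: both deduce the result directly from Lemmas \ref{2_Couples} and \ref{multiple_collisions} via the elementary observation that continuity of the squared-gap processes forces $\tau_0^{k\ell}\ge\tau_{\varepsilon'}^{k\ell}$ for small $\varepsilon'$. Your version is in fact a bit cleaner---the paper reintroduces the $\tau^3_\varepsilon$ splitting already absorbed into the statement of Lemma \ref{2_Couples}---and your explicit case split $|i^*-i^{**}|\ge 2$ versus $|i^*-i^{**}|=1$ for the uniqueness of $i^*$ spells out a detail the paper leaves implicit.
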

\begin{proof} It is enough to write that for all $\varepsilon>0$
$$\P\left(\inf_{k+1<\ell} \tau_0^{k\ell}\le T\right)
\le d^2\{\max_{k+1<\ell}\P\left( \tau_0^{k\ell}\le T\wedge \tau^3_\varepsilon\right)
+\P\left(\tau^3_\varepsilon\le T\right)\}$$
and deduce from Lemmas \ref{2_Couples} and \ref{multiple_collisions} that
the right hand side is as small as wished when $\varepsilon$ goes to zero.

\end{proof}

\subsection{Smoothness properties of the limiting process}

\begin{lemma}\label{lambda_holder}
We have the following smoothness  properties:
\begin{itemize}
\item For all $T<\infty$ and $\varepsilon>0$, there exists
 $C,c',c$ finite positive constants so that for all $\delta,\eta$ positive real numbers so that
  $\eta\le c'(\varepsilon^2\wedge \delta\varepsilon)$ we have
\be
\P\left[\max_{1\le i\le d} \sup_{s\le t\le (s+ \eta) \wedge \tau_\varepsilon^3 \atop 0 \leq t\leq T}|\lambda_i(s)-\lambda_i(t)|\ge \delta
 \right]\le  \frac{C}{\eta}\left( 
e^{-c\delta^4/2\eta} +  e^{-c\varepsilon^4/\eta}\right) \,.
\ee

\item For all $T<\infty$ and $\varepsilon>0$, there exists
 $C,c',c$ finite positive constants so that for all $\delta,\eta$ positive real numbers so that
  $\eta\le c'(\varepsilon^2\wedge \delta\varepsilon)$ we have

\be
\P\left[ \max_{i\neq j} \sup_{s\le t\le (s+ \eta)\wedge \tau_\varepsilon^3 \atop 0\leq t \leq T} \int_s^t \frac{du}{|\lambda_i(u)-\lambda_{j}(u)|} \ge \delta  \right]\le  \frac{C}{\eta}\left( 
e^{-c\delta^4/2\eta} +  e^{-c\varepsilon^4/\eta}\right)
\,.
\ee
\end{itemize}
\end{lemma}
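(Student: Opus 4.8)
The plan is to prove the second point first; the first one then follows almost immediately from \eqref{theeqlim}. Indeed, for $s\le t$,
\[
\lambda_i(t)-\lambda_i(s)=-\gamma\int_s^t\lambda_i(u)\,du+\sqrt2\,(b^i_t-b^i_s)+p\beta\int_s^t\sum_{j\neq i}\frac{du}{\lambda_i(u)-\lambda_j(u)}\,,
\]
and on the event $\{\sup_{[0,T]}\max_i|\lambda_i|\le M\}$, whose complement has probability at most $e^{-\alpha(M-M_0)^2}$ by \eqref{zx}, the linear term is bounded by $\gamma M\eta$, the supremum over $t\in[s,s+\eta]$ of the Brownian increment exceeds $\delta/3$ with probability at most $Ce^{-c\delta^2/\eta}$ by the reflection principle (a union bound over the $\lceil T/\eta\rceil$ intervals of a partition of $[0,T]$ producing the prefactor $C/\eta$), and the drift term is in modulus at most $p\beta\sum_{j\neq i}\int_s^t du/|\lambda_i(u)-\lambda_j(u)|$, which is exactly what the second point controls (applied with $\delta$ replaced by $\delta/(3p\beta d)$). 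Choosing $M$ as a suitable function of $\varepsilon,\eta$ absorbs the first contribution, so everything reduces to the second point.

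For the second point I would work with the consecutive gaps $g_i:=\lambda_i-\lambda_{i-1}\ge 0$, $2\le i\le d$; since any $|\lambda_i-\lambda_j|$ is a sum of consecutive gaps, hence bounded below by each of them, it suffices to estimate $\int_s^t du/g_i(u)$ for these $d-1$ gaps. It\^o's formula applied to \eqref{theeqlim} gives
\[
dg_i(u)=-\gamma g_i(u)\,du+d\nu_i(u)+\frac{2p\beta}{g_i(u)}\,du-p\beta\,g_i(u)\,\Xi_i(u)\,du\,,
\]
with $\nu_i:=\sqrt2(b^i-b^{i-1})$ (bracket $4u$) and $\Xi_i(u):=\sum_{j\neq i-1,i}\big((\lambda_i-\lambda_j)(\lambda_{i-1}-\lambda_j)\big)^{-1}(u)\ge 0$, the nonnegativity being immediate since the $\lambda_j$ are ordered so no $\lambda_j$, $j\neq i-1,i$, lies between $\lambda_{i-1}$ and $\lambda_i$. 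Integrating and rearranging yields the basic identity
\[
2p\beta\int_s^t\frac{du}{g_i(u)}=\big(g_i(t)-g_i(s)\big)+\gamma\int_s^t g_i(u)\,du-\big(\nu_i(t)-\nu_i(s)\big)+p\beta\int_s^t g_i(u)\Xi_i(u)\,du\,.
\]
On $\{\sup_{[0,T]}\max_i|\lambda_i|\le M\}$ the term $\gamma\int g_i\,du$ is at most $2\gamma M\eta$, and the increment of the martingale $\nu_i$ over the window is controlled, uniformly over the $\lceil T/\eta\rceil$ subintervals, by the reflection principle.

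The two delicate points are the upper part of $g_i(t)-g_i(s)$ (the gap opening up) and the interaction term. Using $\dfrac{g_i}{(\lambda_i-\lambda_j)(\lambda_{i-1}-\lambda_j)}=\dfrac{1}{\lambda_{i-1}-\lambda_j}-\dfrac{1}{\lambda_i-\lambda_j}$, each summand of $g_i\Xi_i$ is nonnegative and bounded by $1/g_{i-1}$ if $j<i-1$ and by $1/g_{i+1}$ if $j>i$, so $\int_s^t g_i\Xi_i\,du\le d\int_s^t du/g_{i-1}+d\int_s^t du/g_{i+1}$ (a term dropped when the index is out of range), which couples the estimate for $g_i$ to those for $g_{i\pm1}$. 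The stopping time $\tau^3_\varepsilon$ of Lemma \ref{multiple_collisions} breaks the coupling: there are $c_0,\kappa>0$ so that, on $\{t\le\tau^3_\varepsilon\}$, whenever $g_i(u)\le c_0\sqrt\varepsilon$ the triples $\{i-2,i-1,i\}$ and $\{i-1,i,i+1\}$ force $g_{i-1}(u)\wedge g_{i+1}(u)\ge\kappa\sqrt\varepsilon$. I would therefore decompose $[s,t]$ along the excursions of $g_i$ below the level $c_0\sqrt\varepsilon$: on the complement of these excursions $\int du/g_i\le \eta/(c_0\sqrt\varepsilon)\le\delta$ by $\eta\le c'\delta\varepsilon$ and $\varepsilon\le 1$; on each excursion $[\sigma,\sigma']$ one has $g_i(\sigma')-g_i(\sigma)\le 0$ (it equals $0$ on a full excursion), $\gamma\int_\sigma^{\sigma'}g_i\le\gamma c_0\sqrt\varepsilon\,\eta$, and by the preceding observation $\int_\sigma^{\sigma'}g_i\Xi_i\,du\le 2d\eta/(\kappa\sqrt\varepsilon)$, so that the identity gives $\int_\sigma^{\sigma'}du/g_i\le\frac1{2p\beta}\big(c_0\sqrt\varepsilon\,\eta\,\gamma+2p\beta d\eta/(\kappa\sqrt\varepsilon)+|\nu_i(\sigma')-\nu_i(\sigma)|\big)$ (the first two contributions summing over excursions to bounds in total time $\le\eta$). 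Processing the gaps in the order $g_2,g_d,g_3,g_{d-1},\dots$, exactly as in the recursion of Lemma \ref{lemCepa}, keeps every interaction term under control, and reduces the claim to a one-dimensional statement about a process behaving, on $\{g_i\le c_0\sqrt\varepsilon,\ t\le\tau^3_\varepsilon\}$, like a Bessel process of index $1+p\beta$ with bounded perturbation, together with its driving martingale: the probability that such a process stays atypically small throughout a window of length $\eta$ is estimated, after the Brownian rescaling $u\mapsto u/\eta$, by reflection/small-ball bounds at the scales $\delta$ and $\varepsilon$, which produce the two exponential terms of the statement; the condition $\eta\le c'(\varepsilon^2\wedge\delta\varepsilon)$ is precisely what makes these scales separate. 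Summing over the finitely many gaps and the $\lceil T/\eta\rceil$ subintervals gives the conclusion, and the first point follows as explained above.

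The main obstacle is this last one-dimensional step together with the recursion: one must make quantitative, through the $\tau^3_\varepsilon$ stopping, the mechanism by which a small gap forces its neighbours to be of order $\sqrt\varepsilon$, then extract the stated exponents from the resulting Bessel-type small-ball/excursion estimates, and — the bookkeeping part — keep all constants uniform in the left endpoint $s$ so that the union bound costs only the factor $C/\eta$.
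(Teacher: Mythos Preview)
Your approach reverses the paper's order: the paper proves the first point directly and then deduces the second from it, while you propose to establish the second point first and deduce the first from the SDE. Your deduction of the first point from the second is fine; the difficulty is entirely in your direct argument for the second point, which has a genuine circularity.

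The paper's key device for the first point is one you do not use. For an index $i$ with $g_i(s)=\lambda_i(s)-\lambda_{i-1}(s)$ small (the only delicate case on $\{s\le\tau^3_\varepsilon\}$), one passes to the \emph{squared} gap $x_i^2=g_i^2$. It\^o's formula gives
\[
dx_i^2(t)=4(1+p\beta)\,dt-\gamma x_i^2\,dt+2x_i\,d\nu_i-2p\beta\sum_{k\neq i-1,i}\frac{x_i^2}{(\lambda_i-\lambda_k)(\lambda_{i-1}-\lambda_k)}\,dt\,,
\]
so the singular $1/g_i$ drift in $dg_i$ becomes the bounded constant $4p\beta$, and the interaction term is bounded under $\tau^3_\varepsilon$ together with a short auxiliary stopping time $T_\varepsilon$ (which is then removed). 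Standard sub-Gaussian martingale bounds yield $\P[\,\sup|x_i^2(t)-x_i^2(s)|\ge\delta^2\,]\le e^{-c\delta^4/\eta}$, and since $x_i\ge0$ one has $|x_i(t)-x_i(s)|\le\sqrt{|x_i^2(t)-x_i^2(s)|}$; this is where the exponent $\delta^4$ comes from. Combining with $\lambda_i+\lambda_{i-1}$ (whose singular drifts cancel) controls each $\lambda_i$ near a potential collision. The second point is then a one-line consequence of the SDE for $\lambda_d$ (all repulsion terms positive), with the usual recursion to smaller indices.

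Your route instead tries to control $\int_s^t du/g_i$ via the identity $2p\beta\int du/g_i=(g_i(t)-g_i(s))+\cdots$. The boundary term $g_i(t)-g_i(s)$ is exactly what the first point bounds, and your excursion argument does not eliminate it. Summing $|\nu_i(\sigma')-\nu_i(\sigma)|$ over the excursions of $g_i$ below a fixed level is ill-posed: a diffusion that touches that level produces infinitely many such excursions in any neighbourhood, and the sum of absolute martingale increments over them diverges. If instead you apply the identity once on $[s,t]$, you are back to bounding $g_i(t)-g_i(s)$ and the full interaction $\int g_i\Xi_i$ (which is \emph{not} controlled by $\tau^3_\varepsilon$ where $g_i$ is large but a neighbour is small). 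Finally, the ``Bessel small-ball'' reduction is not the right object: $\int du/g_i\ge\delta$ does not force $g_i$ to stay small throughout the window, so small-ball probabilities for $g_i$ do not bound the event in question. The squaring trick is precisely what dissolves all of these circularities.
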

{\bf{Proof.}} Let us first fix $s\in [0,T]$ and set
$I=\{i\in\{2,\ldots, d\}: |\lambda_i(s)-\lambda_{i-1}(s)|\leq \varepsilon/3\}$
and note that on the event $\{s\leq \tau_\varepsilon^3 \}$,
 the connected subsets of $I$ contain at most one element.
Let $T_\varepsilon=\inf\{t\ge s: \inf_{ i\notin I}
|\lambda_i(t)-\lambda_{i-1}(t)|\leq\varepsilon/4\}$. The continuity of the $\lambda_i$ implies that $T_\varepsilon$ is almost surely strictly positive.

If $i\not\in I\cup\{I-1\}$, then we have, for $t\in [s;(s+\eta)\wedge \tau_\varepsilon^3 \wedge T_\varepsilon]$ 
\begin{align*}
|\lambda_i(t)-\lambda_i(s)| &\leq \gamma \int_s^t |\lambda_i(u)| du + \sqrt{2} |b^i_t - b^i_s| + p \beta \int_s^t \sum_{j \not= i } \frac{du}{|\lambda_i(u)-\lambda_{j}(u)|}  \\
 &\leq  \gamma \int_s^t |\lambda_i(u)| du + \sqrt{2} |b^i_t - b^i_s| + 4p \beta (d-1)\frac{ t-s }{\varepsilon} \,.   
\end{align*}
Using \eqref{zx} and \cite[Corollary H.13]{AGZ}, it is easy to deduce that there exists a constant $c>0$ such that  for $\eta<\varepsilon\delta/(8p\beta(d-1))$
\begin{equation}\label{i_not_in_I}
\P\left[\max_{i\not\in I\cup\{I-1\}} \sup_{t\in [s;(s+\eta)\wedge \tau_\varepsilon^3 \wedge T_\varepsilon]}  |\lambda_i(t)-\lambda_i(s)| \geq \delta\right] \leq c d e^{-\frac{\delta^2}{2\eta}}\,.
\end{equation} 
Now, if $i\in I$,  with the same argument as for \eqref{i_not_in_I} (the drift term in the SDE satisfied by  $\lambda_i +\lambda_{i-1}$ is also bounded), 
we can show that there exists a constant $c>0$ such that 
\begin{equation}\label{lambda_i+i-1}
\P\left[ \sup_{t\in [s;(s+\eta)\wedge \tau_\varepsilon^3 \wedge T_\varepsilon]}  
|(\lambda_i+ \lambda_{i-1})(t)-(\lambda_i+ \lambda_{i-1})(s) | \geq \delta\right] \leq c  e^{-c\frac{\delta^2}{2\eta}}\,.
\end{equation}
On the other hand, the process $x_i(t):=(\lambda_i-\lambda_{i-1})(t)$ verifies
\begin{align}
dx_i^2(t) &= 4(1+p\beta) dt - \gamma x_i^2(t) dt + 2 x_i(t) (db^i_ t - db^{i-1}_t)\nonumber\\ 
&- 2 p \beta \sum_{k\not= j-1,j} \frac{(\lambda_i(t) - \lambda_{i-1}(t))^2}{(\lambda_i(t) - \lambda_k(t)) (\lambda_{i-1}(t)-\lambda_k(t))} dt\,.\nonumber
\end{align}
The denominator in the last term of the above r.h.s
 is bounded below on the interval $t\in [s;(s+\eta)\wedge \tau_\varepsilon^3\wedge T_\varepsilon]$ by 
$2 p \beta (d-2)\frac{1}{\varepsilon}$. Thus, using again \eqref{zx} and \cite[Corollary H.13]{AGZ}, we can show that for $\delta>c\eta/\varepsilon$, 
\begin{equation}\label{lambda_i-i-1^2}
\P\left[ \sup_{t\in [s;(s+\eta)\wedge \tau_\varepsilon^3 \wedge T_\varepsilon]}  
|x_i(t) - x_i(s) | \geq \sqrt{\delta}\right]\le \P\left[ \sup_{t\in [s;(s+\eta)\wedge \tau_\varepsilon^3 \wedge T_\varepsilon]}  
|x_i^2(t) - x_i^2(s) | \geq \delta\right] \leq c e^{-c\frac{\delta^2}{2\eta}}\,
\end{equation}
where the first inequality is due to the fact that $x_i$ is non-negative.
Using \eqref{lambda_i+i-1} and  \eqref{lambda_i-i-1^2} gives for $\eta<\delta\varepsilon/c$
\begin{equation*}
\P\left[\max_{i\in I\cup\{I-1\}} \sup_{t\in [s;(s+\eta)\wedge \tau_\varepsilon^3 \wedge T_\varepsilon]}  |\lambda_i(t)-\lambda_i(s)| \geq \delta\right] \leq 2c d e^{-c\frac{\delta^4}{2\eta}}\,.
\end{equation*} 
Thus, with \eqref{i_not_in_I}, we deduce that for $\eta<\delta\varepsilon/c$
\begin{equation*}
\P\left[\max_{i} \sup_{t\in [s;(s+\eta)\wedge \tau_\varepsilon^3 \wedge T_\varepsilon]}  |\lambda_i(t)-\lambda_i(s)| \geq \delta\right] \leq 2c d e^{-c\frac{\delta^4}{2\eta}}\,.
\end{equation*}

In particular, there exists $c'>0$ so  that if $\varepsilon^2>c\eta$, 
$$\P\left[ T_\varepsilon< (s+\eta)\wedge\tau^3_\varepsilon\right]
\le \P\left[\max_{i} \sup_{s\le t\le (s+\eta)\wedge T_\varepsilon\wedge
\tau_\varepsilon^3}
|\lambda_i(t)- \lambda_i(s)|\geq 5\varepsilon/12 \right]\le \frac{4c d T}{\eta}
e^{-c'\varepsilon^4/2\eta}\,,$$
which is as small as wished provided $\eta$ is chosen small
enough.
This allows to remove the stopping time and get for any fixed $s<T$,
and $\delta>c\eta/\varepsilon$
$$\P\left[\max_{i}\sup_{s\le t\le (s+\eta)\wedge
\tau_\varepsilon^3}
|\lambda_i(t)- \lambda_i(s)|\geq \delta\right]\le 2 c d
e^{-c\delta^4/2\eta} +  2d
c e^{-c'\varepsilon^4/2\eta} \,.$$
 The uniform estimate
on $s$ is obtained as usual by taking $s$ in a grid with mesh $\eta/2$ up to
divise $\delta$ by two and to multiply the probability by $2T/\eta$. Thus we find  constant $c, c',$ and $ C$ so that if $\eta\le c(\varepsilon^2\wedge \delta\varepsilon)$ we have
$$\P\left[\max_{i}\sup_{s\le t\le (s+\eta)\wedge
\tau_\varepsilon^3\atop 0\le s,t\le T}
|\lambda_i(t)- \lambda_i(s)|\geq \delta\right]\le \frac{CT}{\eta}\left( 
e^{-c\delta^4/2\eta} +  e^{-c'\varepsilon^4/\eta}\right) \,.$$
The second control is a direct consequence of the
 first as 
we can first consider the cas $j=d$ to deduce that for $i<d$
$$|\int_s^t\frac{du}{\lambda_d(u)-\lambda_i(u)}|\le
|\lambda_d(t)-\lambda_d(s)|+\sqrt{2}|b_d(t)-b_d(s)|$$
where the right hand side is continuous.
We then consider recursively the other indices.
\qed

\subsection{Approximation by less colliding processes}\label{approx_less_colliding}
When $p\beta\geq 1$, it is well known \cite[Lemma 4.3.3]{AGZ} that the process $\lambda$
has almost surely no collision. In this case, the singularity of the drift
which defines the SDE is not really important as it is almost always avoided.
In the case $p\beta<1$, we know that collisions occur and in fact
can occur as much as for a Bessel process with small parameter.
The singularity
of the drift  becomes important, in particular when we will
 show the convergence in law of the process of the eigenvalues $\lambda^n$ towards $\lambda$. To this end, we show that $\lambda$ can be 
approximated by a process which does not spend too much time in collisions.

For $\delta >0$, we define a  new process $(\lambda_i^\delta(t))_{t\geq 0}$ as follows.
\begin{definition}\label{deflimprocdelta}
Let $T_1:= \inf\{t\geq 0 \,: \exists i \not = j, \lambda_i(t) = \lambda_j(t) \}$ and for all $t<T_1$, set $\lambda_i^\delta(t):=\lambda_i(t)$. 
For $t>T_1$, we define the process recursively by setting for all $\ell\geq 2, \lambda_i^\delta(T_\ell^\delta):=\lambda_i^\delta(T_\ell^\delta-)+i\delta$
and for $t>T_\ell^\delta$, the process $\lambda_i^\delta(t)$ is defined up to time 
$T_{\ell+1}^\delta:=\inf\{t > T_\ell^\delta \,: \exists i \not = j, \lambda_i^\delta(t) = \lambda_j^\delta(t) \}$ as the unique strong 
solution of the system
\begin{align}\label{sde_lamb}
d\lambda_i^\delta(t) = -\gamma \lambda_i^\delta(t) dt + \sqrt{2} db_t^i + p \beta \sum_{j\not = i} \frac{dt}{\lambda_i^\delta(t)-\lambda_j^\delta(t)}\,.
\end{align}
\end{definition}
 The main result of this section is that
\begin{theorem}\label{convproclim}
Construct the process $\lambda$ with the same Brownian motion $b$.
Then, for any time $T>0$, any $\xi\in (0,p\beta/4)$
$$\lim_{\delta\downarrow 0}\bP\left( \sup_{0\le t\le T}\max_{1\le i\le d}
|\lambda_i(t)-\lambda_i^\delta(t)|\le \delta^\xi\right)=1\,.$$
\end{theorem}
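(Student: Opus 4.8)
The plan is to couple $\lambda$ and $\lambda^\delta$ through the common Brownian motion $b$, use the convexity of the interaction to show that $\lambda^\delta$ separates from $\lambda$ only at its resets, and then control the number of resets by a Bessel--type excursion estimate. Write $F_i(x)=-\gamma x_i+p\beta\sum_{j\neq i}(x_i-x_j)^{-1}$ for the drift of \eqref{theeqlim} and \eqref{sde_lamb}, let $T_1^\delta=T_1<T_2^\delta<\cdots$ be the reset times of $\lambda^\delta$, set $N(t)=\#\{\ell\colon T_\ell^\delta\le t\}$ and $D(t)=\lambda(t)-\lambda^\delta(t)$. On $(T_\ell^\delta,T_{\ell+1}^\delta)$ both processes stay ordered --- strictly for $\lambda^\delta$, since a reset raises every gap by $\delta$ --- and are driven by the same noise, so $dD_i=(F_i(\lambda)-F_i(\lambda^\delta))\,dt$; since $x\mapsto\big(p\beta\sum_{j\neq i}(x_i-x_j)^{-1}\big)_i$ is minus the gradient of a convex function on the ordered cone, one gets for a.e. $t$
\begin{equation*}
\frac{d}{dt}|D(t)|^2=2\langle D(t),F(\lambda(t))-F(\lambda^\delta(t))\rangle\le-2\gamma|D(t)|^2\le0
\end{equation*}
(the singularities of $F(\lambda)$ at the collision times of $\lambda$ itself form a null set and are integrable by Lemma~\ref{lemCepa}). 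Hence $|D|$ is non-increasing between resets and jumps by at most $c_d\delta$ at a reset, where $c_d:=(\sum_{i=1}^d i^2)^{1/2}$ and the jump of $D$ is $-(\delta,2\delta,\dots,d\delta)$. As $D(0)=0$, $\sup_{[0,T]}\max_i|\lambda_i-\lambda_i^\delta|\le c_d\delta\,N(T)$. Fixing $\xi\in(0,p\beta/4)$ and $K_\delta:=\lfloor\delta^{\xi-1}/c_d\rfloor$, on $\{T_{K_\delta}^\delta>T\}$ one has $N(T)<K_\delta$, whence $\sup_{[0,T]}|D|\le\delta^\xi$; so it suffices to show $\P(T_{K_\delta}^\delta\le T)\to0$ as $\delta\downarrow0$.

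Fix $\eta_0>0$. By Lemmas~\ref{lemCepa},~\ref{multiple_collisions} and~\ref{2_Couples} choose $\varepsilon>0$ and an event $\mathcal G$ with $\P(\mathcal G^c)\le\eta_0$ on which $\max_i\sup_{[0,T]}|\lambda_i|\le R(\varepsilon)$, the relevant stopping times $\tau_\varepsilon^3,\tau_{\varepsilon'}^{ij}$ of those lemmas exceed $T$, and $\int_0^T|\lambda_i-\lambda_j|^{-1}ds\le C_0(\varepsilon)$. Run everything up to $T_{K_\delta}^\delta\wedge T$: there $|D|\le\delta^\xi$, so every reset involves a single pair, all gaps of $\lambda^\delta$ stay $\le2R+1$, and while a gap $x_m:=\lambda_m^\delta-\lambda_{m-1}^\delta$ is $\le\varepsilon/4$ the other gaps of $\lambda^\delta$ exceed $\varepsilon/4$, so $dx_m=2\,dW_t+\frac{2p\beta}{x_m}\,dt+r_m(t)\,dt$ with a Brownian motion $W$ and $|r_m|\le C(\varepsilon)$. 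Fix $m$. Between consecutive resets of pair $m$, $x_m$ goes from $\delta$ to $0$; comparing with a Bessel process of dimension $1+p\beta\in(1,2)$ on $\{x_m\le\varepsilon/4\}$ --- the bounded drift being absorbed by a Girsanov argument over the $O(\delta^2)$ length of an excursion --- and using the scale function $u\mapsto u^{1-p\beta}$, such an excursion reaches $\varepsilon/4$ with conditional probability $\ge c_1(\varepsilon)\,\delta^{1-p\beta}$. An excursion that does reach $\varepsilon/4$ is an upcrossing of $[\delta,\varepsilon/4]$, and Doob's upcrossing inequality for $x_m$ allows only $O_\varepsilon(1)$ such upcrossings over $[0,T_{K_\delta}^\delta\wedge T]$ on $\mathcal G$ (the quadratic and drift variations of $x_m$ being bounded there, the latter because the at most $K_\delta$ excursions from $\delta$ contribute only $O(K_\delta\delta)=o(1)$ to $\int x_m^{-1}\,ds$), say at most $U_0(\varepsilon)$ outside an event of probability $\le\eta_0$. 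Hence, by the strong Markov property at the resets of pair $m$ and a Chernoff bound on the number of ``successful'' (i.e.\ reaching $\varepsilon/4$) excursions, the event $A_L^{(m)}$ that pair $m$ resets at least $L$ times before $T_{K_\delta}^\delta\wedge T$ satisfies, for $L\ge2U_0c_1^{-1}\delta^{p\beta-1}$,
\begin{equation*}
\P(\mathcal G\cap A_L^{(m)})\le\eta_0+\exp\big(-\tfrac18 L\,c_1\,\delta^{1-p\beta}\big).
\end{equation*}

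On $\{T_{K_\delta}^\delta\le T\}$ at least $\lceil K_\delta/(d-1)\rceil$ of the first $K_\delta$ resets involve a single pair, so taking $L=\lceil K_\delta/(d-1)\rceil$ (which exceeds $2U_0c_1^{-1}\delta^{p\beta-1}$ for $\delta$ small, since $\xi<p\beta$ makes $\delta^{\xi-1}\gg\delta^{p\beta-1}$) gives
\begin{equation*}
\P(T_{K_\delta}^\delta\le T)\le\P(\mathcal G^c)+\sum_{m=2}^d\P(\mathcal G\cap A_L^{(m)})\le d\,\eta_0+(d-1)\exp\big(-c_1'(\varepsilon)\,\delta^{\xi-p\beta}\big);
\end{equation*}
since $\xi-p\beta<0$ the last term vanishes as $\delta\downarrow0$, and then letting $\eta_0\downarrow0$ yields the theorem. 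The main obstacle is the excursion estimate of the second paragraph: obtaining the lower bound $\P(\text{reach }\varepsilon/4)\ge c_1\delta^{1-p\beta}$ together with the $O_\varepsilon(1)$ bound on the number of macroscopic excursions of $x_m$, uniformly over the a priori very many excursions and while keeping the far--particle drift under control; and organizing matters --- by running up to the stopping time $T_{K_\delta}^\delta$ --- so that the a priori bound $|D|\le c_d\delta K_\delta$ used throughout is not circular with the conclusion $N(T)<K_\delta$. Carried out rigorously via the squared gap $x_m^2$, the It\^o--Tanaka formula (as in the proof of Lemma~\ref{2_Couples}) and the smoothness estimates of Lemma~\ref{lambda_holder}, this is what restricts $\xi$ to $(0,p\beta/4)$ rather than to $(0,p\beta)$.
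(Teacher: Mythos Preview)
Your first paragraph is exactly the paper's Lemma~\ref{lemjk}: the convexity of the interaction makes $t\mapsto|D(t)|^2$ non-increasing between resets, each reset adds at most $c_d\delta$, and the theorem reduces to Proposition~\ref{T_Lgrand}, i.e.\ $\P(T_{K_\delta}^\delta\le T)\to0$ with $K_\delta\asymp\delta^{\xi-1}$. Your Bessel/scale-function lower bound $\P(\text{excursion reaches }\varepsilon/4\mid\mathcal F_{T_\ell^\delta})\ge c_1\delta^{1-p\beta}$ is the paper's Lemma~\ref{tps}, obtained there by applying It\^o to $(\lambda_i^\delta-\lambda_{i-1}^\delta)^{1-p\beta}$ directly rather than via Girsanov.

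The difference is in how you cap the number of ``successful'' excursions. The paper counts excursions of \emph{duration} $\ge\delta^\xi$: on $\{T_L^\delta\le T\}$ there are trivially at most $T\delta^{-\xi}$ of them, and this deterministic cap is compared to the conditional expectation $\sum_\ell\P(\cdot\mid\mathcal F_{T_\ell^\delta})\ge cL\delta^{1-p\beta}\asymp\delta^{\xi-p\beta}$ by Chebyshev, the variance being controlled through the \emph{upper} bound of Lemma~\ref{upperbound}; the resulting exponent bookkeeping is precisely what forces $\xi<p\beta/4$. You count excursions that reach a \emph{macroscopic} level, cap their number by a $\delta$-free constant $U_0(\varepsilon)$, and close with Chernoff. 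If that cap holds, your argument needs only the lower bound and yields the full range $\xi<p\beta$---so your closing sentence, that the rigorous version ``restricts $\xi$ to $(0,p\beta/4)$'', is not a feature of your outline but an import from the paper's different counting.

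The genuine gap is the upcrossing cap. Doob's inequality requires a submartingale, and $x_m$ is not one: its drift contains $2p\beta/x_m$, unbounded near each reset. You try to repair this by bounding the total drift variation $\int_0^T(\text{drift of }x_m)\,ds$, and indeed one can show $\int_0^T x_m^{-1}\,ds=O_\varepsilon(1)$ on $\mathcal G\cap\{t\le T_{K_\delta}^\delta\}$ as in Lemma~\ref{lemCepa}. But a bound $M$ on the compensator's total variation does \emph{not} bound upcrossings of $[\delta,\varepsilon/4]$: writing $x_m=2W+A$ with $|A|\le M$, an upcrossing of $x_m$ only forces $2W$ to increase by $\varepsilon/4-\delta-2M$, which is vacuous once $M\ge\varepsilon/8$---and here $M=O_\varepsilon(1)$, not $o(\varepsilon)$. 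The fix is to count instead downcrossings of a \emph{fixed} macroscopic interval, say $[\varepsilon/8,\varepsilon/4]$: every successful excursion produces one, and on $\{x_m\ge\varepsilon/8\}$ the drift is uniformly bounded, so the H\"older-type estimate of Lemma~\ref{lambda_holder} (transferred to $\lambda^\delta$ via $|D|\le\delta^\xi$) gives a $\delta$-independent lower bound on each downcrossing's duration, hence at most $O_\varepsilon(T)$ of them. With that repair your Chernoff step goes through and gives $\xi<p\beta$; but it is not ``Doob's upcrossing inequality for $x_m$'', and the proof as written does not supply it.
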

The theorem is a direct consequence of  the following lemma
and proposition. 
\begin{lemma}\label{lemjk} Let $\delta>0$.
Construct the process $\lambda$ with the same Brownian motion $b$ than $\lambda^\delta$.
There exists a constant $c>0$ such that, almost surely, for all $\ell\in \N$
\begin{align*}
\max_{1\leq i \leq d} \sup_{0 \leq t \leq T_{\ell}^\delta} | \lambda_i^{\delta}(t) - \lambda_i(t) | \leq c \delta \ell  \,.
\end{align*}
\end{lemma}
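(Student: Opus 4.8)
The plan is to prove the bound by induction on $\ell$, comparing $\lambda$ and $\lambda^\delta$ on each successive interval $[T_\ell^\delta, T_{\ell+1}^\delta]$, where both processes solve the \emph{same} SDE \eqref{sde_lamb} driven by the \emph{same} Brownian motion $b$, and the only discrepancy comes from the jump of size $i\delta$ that $\lambda^\delta$ receives at the collision times $T_\ell^\delta$. The base case $\ell = 1$ is trivial since $\lambda_i^\delta(t) = \lambda_i(t)$ for $t < T_1 = T_1^\delta$. For the inductive step, set $e_i(t) := \lambda_i^\delta(t) - \lambda_i(t)$ and observe that at time $T_\ell^\delta$ the vector $e(\cdot)$ receives an increment bounded by $d\delta$ in sup-norm, while on the open interval $(T_\ell^\delta, T_{\ell+1}^\delta)$ the Brownian increments cancel exactly, so that
\begin{align*}
de_i(t) = -\gamma e_i(t)\, dt + p\beta \sum_{j \neq i} \left( \frac{1}{\lambda_i^\delta(t) - \lambda_j^\delta(t)} - \frac{1}{\lambda_i(t) - \lambda_j(t)} \right) dt \,.
\end{align*}
The key structural fact is that the interaction drift is the gradient of a \emph{convex} potential: writing $F_i(x) = -\gamma x_i + p\beta \sum_{j\neq i} (x_i - x_j)^{-1}$ on the chamber $x_1 < x_2 < \cdots < x_d$, the map $x \mapsto (F_i(x))_i$ is such that $\langle F(x) - F(y), x - y \rangle \leq 0$ whenever $x, y$ lie in the (ordered) chamber. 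Hence the Euclidean norm $\|e(t)\|^2$ is non-increasing in $t$ on each interval where both processes remain ordered (the $-\gamma$ term only helps). This monotonicity is precisely the standard coupling/contraction argument alluded to in the introduction of the paper.

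Carrying this out: on $(T_\ell^\delta, T_{\ell+1}^\delta)$ we get $\frac{d}{dt}\|e(t)\|^2 = -2\gamma\|e(t)\|^2 + 2p\beta \sum_{i}\sum_{j\neq i} e_i(t)\left(\frac{1}{\lambda_i^\delta - \lambda_j^\delta} - \frac{1}{\lambda_i - \lambda_j}\right) \leq 0$, where the second sum is $\leq 0$ by the antisymmetrization trick (pair $(i,j)$ with $(j,i)$, giving terms of the form $-(e_i - e_j)^2 / [(\lambda_i^\delta - \lambda_j^\delta)(\lambda_i - \lambda_j)]$ with a positive denominator since both differences have the same sign). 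Therefore $\|e(T_{\ell+1}^\delta-)\| \leq \|e(T_\ell^\delta)\| \leq \|e(T_\ell^\delta -)\| + \|(\delta, 2\delta, \ldots, d\delta)\| \leq \|e(T_\ell^\delta-)\| + C\delta$ for a constant $C$ depending only on $d$. Iterating over $\ell$ gives $\|e(T_\ell^\delta-)\| \leq C\delta\ell$, and since $\|\cdot\|_\infty \leq \|\cdot\|$, together with monotonicity inside each interval this yields $\max_i \sup_{0\le t\le T_\ell^\delta} |e_i(t)| \leq c\delta\ell$ after enlarging the constant. One must also note that $\lambda^\delta$ is genuinely ordered on each interval (it starts strictly ordered after the jump and \eqref{sde_lamb} preserves order up to $T_{\ell+1}^\delta$ by definition), so the convexity estimate applies throughout.

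The main obstacle is a regularity/well-posedness subtlety rather than the inequality itself: one must be sure that the differential computation for $\|e(t)\|^2$ is legitimate despite the blow-up of individual drift terms $1/(\lambda_i - \lambda_j)$ near collisions of $\lambda$ (which can occur at times \emph{strictly inside} $(T_\ell^\delta, T_{\ell+1}^\delta)$, since $\lambda$ is not the shifted process). The resolution is that the \emph{difference} of reciprocals appearing in $\dot e_i$ is controlled by the squared-difference terms with sign, and that by Lemma \ref{lemCepa} the integrals $\int_0^T ds/|\lambda_i(s) - \lambda_j(s)|$ are almost surely finite, so the relevant quantities are integrable; one runs the argument first with the process stopped before any $\lambda$-collision, gets a bound uniform in the stopping, then passes to the limit using continuity of all the $\lambda_i$ and $\lambda_i^\delta$. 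Since the final bound $c\delta\ell$ is deterministic in form (holding almost surely with a fixed constant $c$ once $d$ is fixed), no probabilistic estimate beyond this finiteness is needed here.
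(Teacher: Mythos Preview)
Your proposal is correct and follows essentially the same route as the paper: induction on $\ell$, cancellation of the Brownian increments, the antisymmetrization identity (the paper's display \eqref{astuce}) to show $\|e(t)\|^2$ is nonincreasing on each $(T_\ell^\delta,T_{\ell+1}^\delta)$, and the triangle inequality at the jump to pick up an extra $c\delta$ in the $\ell^2$ norm with $c=(\sum_{i=1}^d i^2)^{1/2}$. Your final paragraph on the regularity issue (collisions of $\lambda$ occurring strictly inside the interval) is a legitimate point that the paper leaves implicit; your stopping-and-limit argument is the right way to make the differential computation rigorous.
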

To finish the proof it is enough to show that $T_\ell^\delta$ goes
to infinity for $\ell \ll 1/\delta$. This is the content of the next proposition.

\begin{proposition}\label{T_Lgrand}
Let $T<\infty$, $0< \xi < p\beta/4$ and $L=[1/\delta^{1-\xi}]$. 
Then the probability $\P\left[T_L^\delta \leq T \right]$ vanishes when $\delta$ goes to zero. 
\end{proposition}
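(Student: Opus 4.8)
The plan is to control the number of collisions occurring before time $T$ by combining the estimates already established on the limiting process $\lambda$ with the displacement bound of Lemma \ref{lemjk}. The key observation is that between two consecutive restarts $T_\ell^\delta$ and $T_{\ell+1}^\delta$, the shifted process $\lambda^\delta$ solves the same SDE \eqref{sde_lamb} as $\lambda$, so that by Lemma \ref{lemjk}, on the event $\{T_L^\delta \le T\}$ the process $\lambda^\delta$ stays within $c\delta L = c\delta^\xi$ of $\lambda$ uniformly on $[0,T]$. Since a restart at time $T_{\ell+1}^\delta$ occurs precisely when two coordinates of $\lambda^\delta$ collide, each such restart forces two neighbouring coordinates of $\lambda$, say $\lambda_{i}$ and $\lambda_{i-1}$, to be within $2c\delta^\xi$ of each other at that time. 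Thus, if $T_L^\delta \le T$, then the limiting process $\lambda$ must have $L$ times (at the instants $T_2^\delta < \dots < T_L^\delta$) some pair of adjacent coordinates at distance at most $2c\delta^\xi$.

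Next I would use Lemma \ref{2_Couples} and Lemma \ref{multiple_collisions} to argue that such a scenario is very unlikely when $L$ is large. First, on the complement of $\{\tau_\varepsilon^3 \le T\}$ (whose probability is small by Lemma \ref{multiple_collisions}) and outside the events $\{\tau_{\varepsilon'}^{ij} \le T\}$ controlled by Lemma \ref{2_Couples}, at any given time at most one pair of adjacent coordinates of $\lambda$ can be within $\varepsilon'$; hence the $L$ near-collision instants of $\lambda$ must all involve the \emph{same} pair $(i^*, i^*-1)$ (for $\delta$ small enough that $2c\delta^\xi < \varepsilon'$). So the quantity $x_{i^*}(t) = (\lambda_{i^*} - \lambda_{i^*-1})(t)$ must return to a value $\le 2c\delta^\xi$ at $L$ distinct times in $[0,T]$. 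Between consecutive restarts, however, $\lambda^\delta$ is shifted apart by a gap of order $\delta$ (the increments $i\delta$ give $\lambda_{i^*}^\delta - \lambda_{i^*-1}^\delta$ a jump of $+\delta$), so by Lemma \ref{lemjk} the difference $x_{i^*}$ for the true process must have grown from $O(\delta^\xi)$ up to at least $\delta - 2c\delta^\xi \gtrsim \delta/2$ and come back down again between each pair of consecutive restart times.

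This gives $L/2$ disjoint time intervals on each of which $x_{i^*}$ oscillates between a level $\asymp \delta^\xi$ and a level $\asymp \delta$; but $x_{i^*}$ behaves like a Bessel-type process with bounded drift (using the SDE for $x_i^2$ derived in the proof of Lemma \ref{lambda_holder} together with the bound on its drift away from triple collisions), and the expected time for such a process to travel from $\delta^\xi$ down to $\delta^\xi/2$, say, or the number of such down-crossings it can perform in unit time, can be estimated: the local time / occupation-time estimates of the Bessel process of dimension $2(1+p\beta)$ (equivalently the finiteness of $\E\int_0^T x_{i^*}^{-1}$ from Lemma \ref{lemCepa}) force the number of excursions of $x_{i^*}$ below level $\delta$ that reach below level $\delta^\xi$ to be, in expectation, $O(\delta^{-(1-\xi)\cdot\text{something}})$ with an exponent strictly less than $1-\xi$ when $\xi < p\beta/4$, so that $\P[\text{at least } L = [\delta^{-(1-\xi)}] \text{ such excursions}] \to 0$ by Markov's inequality. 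The main obstacle is precisely this last quantitative excursion count: making rigorous the claim that a $(p\beta)$-Bessel-like process cannot perform $[1/\delta^{1-\xi}]$ down-crossings from level $\sim\delta$ to level $\sim\delta^\xi$ in time $T$, and extracting the sharp threshold $\xi < p\beta/4$ from the scaling of Bessel excursion lengths (a down-excursion to depth $\delta^\xi$ starting near $\delta$ has length of order $\delta^{2\xi}$ up to the Bessel exponent, giving roughly $T\delta^{-2\xi}$ many — one must check the bookkeeping that $2\xi < 1-\xi$, i.e. $\xi<1/3$, is in fact superseded by the sharper constraint coming from the $p\beta$-dependence of the excursion measure, yielding $\xi < p\beta/4$). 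Everything else reduces to assembling Lemmas \ref{lemCepa}, \ref{multiple_collisions}, \ref{2_Couples} and \ref{lemjk} via a union bound over the finitely many pairs $(i,i-1)$.
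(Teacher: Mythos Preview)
Your argument has a genuine gap at its central step. You claim that between consecutive restart times the true gap $x_{i^*}=\lambda_{i^*}-\lambda_{i^*-1}$ ``must have grown from $O(\delta^\xi)$ up to at least $\delta-2c\delta^\xi\gtrsim\delta/2$'', invoking Lemma \ref{lemjk}. But Lemma \ref{lemjk} only gives $|\lambda_i^\delta-\lambda_i|\le c\delta L = c\delta^\xi$, and since $0<\xi<1$ we have $\delta^\xi\gg\delta$ for small $\delta$, so $\delta-2c\delta^\xi$ is \emph{negative}. The displacement bound you quote is much coarser than the jump size $\delta$, so it says nothing about the true process at scale $\delta$: the true gap $x_{i^*}$ is perfectly allowed to sit in $[0,2c\delta^\xi]$ throughout, never approaching $\delta/2$, while the auxiliary process $\lambda^\delta$ undergoes all $L$ restarts. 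Consequently there are no forced up-down crossings of $x_{i^*}$ between levels $\delta^\xi$ and $\delta$, and the excursion-counting heuristic that follows has no input to work with. You also acknowledge that the excursion estimate itself (and the emergence of the threshold $\xi<p\beta/4$) is left as ``bookkeeping to check''; in fact it is the heart of the matter and is not supplied.

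The paper's proof proceeds differently. It never tries to read off oscillations of the \emph{limiting} process from Lemma \ref{lemjk}; instead it works directly with the inter-collision times $T_{\ell+1}^\delta-T_\ell^\delta$ of the \emph{auxiliary} process. Two Bessel-type comparison lemmas (Lemmas \ref{upperbound} and \ref{lowerbound}) give, on the good event $\{\tau_\varepsilon^\delta\ge T_L^\delta\}$, the conditional bounds
\[
c\,\delta^{1-p\beta}\ \le\ \P\bigl[T_{\ell+1}^\delta-T_\ell^\delta\ge\delta^\xi\ \big|\ \mathcal F_{T_\ell^\delta}\bigr]\ \le\ C\,\delta^{(1-p\beta)(1-\xi/2)}.
\]
The lower bound comes from an exact scale function computation for the gap process (writing $(\lambda_i^\delta-\lambda_{i-1}^\delta)^{1-p\beta}$ as a near-martingale), and the upper bound from comparison with a true Bessel process and its explicit first-passage density. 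Summing the lower bounds over $\ell\le L$ shows that the expected number of ``long'' intervals is $\gtrsim\delta^{\xi-p\beta}$, which dominates the maximal possible number $T\delta^{-\xi}$ of long intervals fitting in $[0,T]$; a Chebyshev inequality for the martingale $\sum_\ell\bigl(\mathbf 1_{\{T_{\ell+1}^\delta-T_\ell^\delta\ge\delta^\xi\}}-\P[\cdot\mid\mathcal F_{T_\ell^\delta}]\bigr)$, with second moment controlled by the upper bound, then yields $\P[T_L^\delta\le T]\to 0$ precisely when $\xi<p\beta/4$. The role of Lemma \ref{lemjk} in the paper is only to transfer the no-triple-collision and no-simultaneous-collision estimates from $\lambda$ to $\lambda^\delta$, not to compare the two processes at scale $\delta$.
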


{\bf Proof of Lemma \ref{lemjk}.}
We proceed by induction over $\ell$ to show that, for each $\ell$,
$$ \sup_{0 \leq t \leq T_{\ell}^\delta} \left(\sum_{i=1}^d (\lambda_i^{\delta} - \lambda_i)^2(t)\right)^{1/2} \leq c\delta \ell $$ 
with $c = (\sum_{i=1}^d i^2 = d(d+1)(2d+1)/6)^{\frac 1 2}$.

$\bullet$ We treat the case $\ell=1$. By definition of the processes, $\lambda^\delta=\lambda$ on $[0,T_1^\delta)$. At time $t=T_1^\delta$, the separation procedure implies that
$$
\sum_{i=1}^d (\lambda_i^{\delta} - \lambda_i)^2(T_1^\delta) = \sum_{i=1}^d ((\lambda_i^{\delta} - \lambda_i) (T_1^\delta-) + i \delta)^2 =c^2\delta^2 \,.$$
The property is true for $\ell=1$.

$\bullet$ Suppose it is true for $\ell$.
For $t\in [T_\ell^\delta, T_{\ell+1}^\delta)$, since $\lambda^\delta$ and $\lambda$ are driven by the same Brownian motion, we get
\begin{align*}
d\sum_{i=1}^d &(\lambda_i^{\delta}(t) - \lambda_i(t))^2
=-2 \gamma \sum_{i=1}^d (\lambda_i^{\delta}(t) - \lambda_i(t))^2  dt \\
&+ 2p\beta\sum_{i=1}^d  \sum_{j\neq i}(\lambda_i^{\delta}(t) - \lambda_i(t))
\left(\frac{1}{\lambda_i^{\delta}(t)-\lambda_j^{\delta}(t)}-\frac
{1}{\lambda_i(t)-\lambda_j(t)} \right)dt\,.
\end{align*}
Observe that 
\begin{align}\label{astuce}
&\sum_{i=1}^d  \sum_{j\neq i}(\lambda_i^{\delta}(t) - \lambda_i(t))
\left(\frac{1}{\lambda_i^{\delta}(t)-\lambda_j^{\delta}(t)}-\frac
{1}{\lambda_i(t)-\lambda_j(t)} \right)
\\ \notag
&=\frac{1}{2}\sum_{i=1}^d  \sum_{j\neq i}(\lambda_i^{\delta}(t)-\lambda_j^{\delta}(t) -( \lambda_i(t)-\lambda_j(t)))
\left(\frac{1}{\lambda_i^{\delta}(t)-\lambda_j^{\delta}(t)}-\frac
{1}{\lambda_i(t)-\lambda_j(t)} \right)
\\ \notag
&=\frac{1}{2}\sum_{i=1}^d  \sum_{j\neq i}\left(\lambda_i^{\delta}(t)-\lambda_j^{\delta}(t) -( \lambda_i(t)-\lambda_j(t))\right)^2 
\frac{1}{(\lambda_i^{\delta}(t)-\lambda_j^{\delta}(t))(
\lambda_i(t)-\lambda_j(t))} 
\\ \notag
&\leq 0 
\end{align}
as the $(\lambda_i)_{1\le i\le d}$ and the $(\lambda_i^\delta)_{1\le i\le d}$ are ordered. Thus,
\begin{equation}\label{intervalle}
\sup_{t\in [T_\ell^\delta, T_{\ell+1}^\delta)}
\sum_{i=1}^d (\lambda_i^{\delta}(t) - \lambda_i(t))^2 \leq 
\sum_{i=1}^d (\lambda_i^{\delta}(T_\ell^\delta) - \lambda_i(T_{\ell}^\delta))^2.
\end{equation}
In addition, because of the separation procedure at time $T_{\ell+1}^\delta$, we have
\begin{align*}
&\left(\sum_{i=1}^d (\lambda_i^{\delta} - \lambda_i)^2(T_{\ell+1}^\delta)\right)^{1/2} = \left(\sum_{i=1}^d \left((\lambda_i^{\delta} -\lambda_i)(T_{\ell+1}^\delta-) + i\delta\right)^2\right)^{1/2} \\ 
&\leq \left(\sum_{i=1}^d (\lambda_i^{\delta} -\lambda_i)^2 (T_{\ell+1}^\delta-)  \right)^{1/2} + \delta  c\leq \delta (\ell+1)c \,, 
\end{align*}
where we used the induction hypothesis in the last line. The proof is thus complete.

\qed

{\bf Proof of Proposition \ref{T_Lgrand}.} In the case $p\beta\ge 1$, it
is well known  \cite[p. 252]{AGZ} that $T_1$ is almost surely infinite and therefore the
proposition is trivial. We hence restrict ourselves to $p\beta\le 1$.
Let $\eta >0$.
Let us define the stopping times 
\begin{align*}
\tau_\varepsilon^{3,\delta} &:= \inf\{t\geq 0: \min_{|I|=3} S_t^{I,\delta} \leq \varepsilon \}\,,\\
\tau_\varepsilon^{2,\delta}&:= \inf\{t\geq 0: \min_{1\leq i,j \leq d} ((\lambda_i^\delta-\lambda_{i-1}^\delta)^2+(\lambda_j^\delta-\lambda_{j-1}^\delta)^2)(t) \leq \varepsilon \},
\end{align*}
where $S_t^{I,\delta} := \sum_{i,j\in I} (\lambda_i^\delta-\lambda_j^\delta)^2(t)$. 
Set also $\tau_\varepsilon^\delta := \tau_\varepsilon^{2,\delta} \wedge \tau_\varepsilon^{3,\delta}.$
We know from Lemmas \ref{multiple_collisions} and \ref{2_Couples} that we can choose $\varepsilon$ small enough so that
\begin{align*}
\P\left[ \tau_{2\varepsilon}^3 \wedge \tau_{2\varepsilon}^2 \leq T \right] \leq \eta.
\end{align*}
The number $\varepsilon$ being fixed, it is then straightforward to see from 
 Lemma \ref{lemjk} that there exists $\delta_0$ small enough so that 
for all $\delta \leq \delta_0$, we have 
\begin{align*}
\P\left[\tau_{\varepsilon}^{\delta} \leq T  \right] \leq \eta.
\end{align*}
Now, we have 
\begin{align*}
\P\left[ T_L^\delta \leq T \right] \leq \eta + 
\P\left[\delta^\xi \sum_{\ell=1}^L 1_{\{T_{\ell+1}^\delta - T_\ell^\delta \geq \delta^\xi \}} \leq T; \tau_\varepsilon^\delta\geq T_L^\delta \right]\,.
\end{align*}
We  need to show that the second term goes to $0$ when $\delta\rightarrow 0$. Let $\{\mathcal F_t\}_{t\ge 0}$ be the filtration of the driving Brownian motion. We will prove in  Lemma \ref{lowerbound}, there exists a constant $c>0$ such that, on the event $\{\tau_\varepsilon^\delta\geq T_L^\delta\}$,   almost surely 
$$\sum_{\ell=1}^L \P\left[T_{\ell+1}^\delta-T_\ell^\delta \geq \delta^\xi \mid \mathcal{F}_{T_\ell^\delta}\right] \geq  
c\, \delta^{-p\beta+\xi}\,.$$
In the following, we suppose that $\delta$ is small enough so that $c\,\delta^{-p\beta+\xi} \geq \delta^{-p\beta+2\xi}$
and $\delta^{-\xi}\, T - \delta^{-p\beta+\xi} \leq -\delta^{-p\beta+2\xi}.$
For such $\delta$, we have
\begin{align*}
 \P&\left[\sum_{\ell=1}^L 1_{\{T_{\ell+1}^\delta - T_\ell^\delta \geq \delta^\xi \}} \leq \delta^{-\xi}\,T; \tau_\varepsilon^\delta\geq T_L^\delta \right] \\
& \leq \P\left[\sum_{\ell=1}^L 1_{\{T_{\ell+1}^\delta - T_\ell^\delta \geq \delta^\xi \}} - 
 \P\left[T_{\ell+1}^\delta-T_\ell^\delta \geq \delta^\xi \mid \mathcal{F}_{T_\ell^\delta}\right]  \leq - \delta^{-p\beta+2\xi}; 
 \tau_\varepsilon^\delta\geq T_L^\delta \right]\\
 &\leq \P\left[\left| \sum_{\ell=1}^L 1_{\{T_{\ell+1}^\delta - T_\ell^\delta \geq \delta^\xi \}} - 
 \P\left[T_{\ell+1}^\delta-T_\ell^\delta \geq \delta^\xi \mid \mathcal{F}_{T_\ell^\delta}\right]  \right| \geq  \delta^{-p\beta+2\xi}; 
 \tau_\varepsilon^\delta\geq T_L^\delta\right]\\
 &\leq \,\delta^{2p\beta-4\xi}\, \sum_{\ell=1}^L \P\left[T_{\ell+1}^\delta-T_\ell^\delta \geq \delta^\xi ; 
 \tau_\varepsilon^\delta\geq T_L^\delta\right]
\end{align*}
where we used the Tchebychev inequality in the last line. 
Using Lemma \ref{upperbound}, we get that there exists a constant $C>0$ such that 
\begin{align*}
 \P\left[\sum_{\ell=1}^L 1_{\{T_{\ell+1}^\delta - T_\ell^\delta \geq \delta^\xi \}} \leq \delta^{-\xi}\,T; \tau_\varepsilon^\delta\geq T_L^\delta \right] 
& \leq  C \, \delta^{2p\beta-4\xi}\, L\, \delta^{(1-p\beta)(1-2^{-1}\xi)} \leq C \, \delta^{p\beta-4\xi}\,
\end{align*}
which goes to $0$ when $\delta$ goes to $0$. The proposition is proved. 
\qed

\begin{lemma}\label{upperbound}
Let $\xi \in (0;2)$. 
Then there exists a constant $C>0$ such that, almost surely, on ; 
$ \tau_\varepsilon^\delta\geq T_L^\delta$
\begin{equation}
 \P\left[\delta^\xi \leq T_{\ell+1}^\delta-T_\ell^\delta \mid \mathcal{F}_{T_\ell^\delta} \right] \leq C \delta^{(1-p\beta)(1-2^{-1}\xi)}\,.
\end{equation}
\end{lemma}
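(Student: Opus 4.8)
The plan for proving Lemma~\ref{upperbound} is to recognize the spacing between the two particles that collide at $T_\ell^\delta$ as a process dominated by a Bessel diffusion of dimension $1+p\beta\in(1,2)$ issued from $\delta$ — such a diffusion hits $0$ on the natural time‑scale $\delta^2$ — and then to show that it survives up to the much longer time $\delta^\xi$ only with probability of order $\delta^{(1-p\beta)(1-\xi/2)}$. (Recall that $p\beta<1$ here; for $p\beta\ge1$ no collision occurs and there is nothing to prove.)

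First I would reduce to this one‑dimensional survival estimate. On $\{T_\ell^\delta<\infty\}$ choose an adjacent pair $(k-1,k)$ with $\lambda_k^\delta(T_\ell^\delta-)=\lambda_{k-1}^\delta(T_\ell^\delta-)$; by the separation rule in Definition~\ref{deflimprocdelta} the spacing $x(t):=(\lambda_k^\delta-\lambda_{k-1}^\delta)(t)$ satisfies $x(T_\ell^\delta)=\delta$ and, on $[T_\ell^\delta,T_{\ell+1}^\delta)$, by It\^o's formula (cf. the computation of $dx_i^2$ in the proof of Lemma~\ref{lambda_holder}),
\[
dx(t)=2\,dW_t+\Bigl(\frac{2p\beta}{x(t)}-\gamma\,x(t)-p\beta\,x(t)\,\Sigma_t\Bigr)dt,\qquad\Sigma_t:=\sum_{j\neq k-1,k}\frac{1}{(\lambda_k^\delta-\lambda_j^\delta)\,(\lambda_{k-1}^\delta-\lambda_j^\delta)}(t),
\]
where $W:=\tfrac{1}{\sqrt2}(b^k-b^{k-1})$ is a standard Brownian motion. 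The decisive point is the sign $\Sigma_t\ge0$: since the $\lambda^\delta_j$ are ordered and $k-1,k$ adjacent, for every $j\neq k-1,k$ the factors $\lambda_k^\delta-\lambda_j^\delta$ and $\lambda_{k-1}^\delta-\lambda_j^\delta$ have the same sign. I would then introduce $(\bar x_s)_{s\ge0}$ solving $d\bar x_s=2\,dW_{T_\ell^\delta+s}+(2p\beta/\bar x_s)\,ds$, $\bar x_0=\delta$: both equations share the same constant diffusion coefficient and the drift of $x$ is everywhere $\le 2p\beta/x$, so the comparison theorem for one‑dimensional stochastic differential equations gives $x(T_\ell^\delta+s)\le\bar x_s$ on $[0,T_{\ell+1}^\delta-T_\ell^\delta)$. (Because the diffusion coefficients coincide, $s\mapsto x(T_\ell^\delta+s)-\bar x_s$ is of bounded variation and this is just a Gronwall bound on its positive part; the singularity of the drift at $0$ is harmless before either process reaches $0$.) Hence on $\{T_{\ell+1}^\delta-T_\ell^\delta\ge\delta^\xi\}$ one has $\bar x_s\ge x(T_\ell^\delta+s)>0$ for all $s\le\delta^\xi$, i.e. $T_0^{\bar x}:=\inf\{s:\bar x_s=0\}>\delta^\xi$; and since the increments of $W$ after the stopping time $T_\ell^\delta$ are independent of $\mathcal F_{T_\ell^\delta}$,
\[
\P\bigl[\,\delta^\xi\le T_{\ell+1}^\delta-T_\ell^\delta\ \big|\ \mathcal F_{T_\ell^\delta}\,\bigr]\ \le\ \P\bigl[\,T_0^{\bar x}>\delta^\xi\,\bigr],
\]
the right‑hand side being a deterministic number for a fresh copy of $\bar x$ started at $\delta$. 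In particular the qualification ``on $\{\tau_\varepsilon^\delta\ge T_L^\delta\}$'' is not needed.

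Next I would estimate $\P[T_0^{\bar x}>\delta^\xi]$. A naive first‑moment bound is useless since $\E[T_0^{\bar x}]=+\infty$; instead I truncate at the height $h:=\delta^{\xi/2}$ — note $h>\delta$ as $\xi<2$ — and, writing $\tau_h:=\inf\{s:\bar x_s=h\}$ and $\sigma:=T_0^{\bar x}\wedge\tau_h$ (finite a.s., $\bar x$ being a Bessel diffusion of dimension below $2$), use
\[
\{T_0^{\bar x}>\delta^\xi\}\ \subseteq\ \{\tau_h<T_0^{\bar x}\}\ \cup\ \{\sigma>\delta^\xi\},
\]
valid because $T_0^{\bar x}>\delta^\xi$ together with $\tau_h\ge T_0^{\bar x}$ forces $\bar x_s\in(0,h)$ throughout $[0,\delta^\xi]$. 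The generator of $\bar x$ is $\mathcal A=2\partial_z^2+(2p\beta/z)\partial_z$; a scale function is $g(z)=z^{1-p\beta}$ (harmonic, increasing, $g(0)=0$ since $p\beta<1$), and $m(z):=\bigl(h^{1+p\beta}z^{1-p\beta}-z^2\bigr)/\bigl(4(1+p\beta)\bigr)$ solves $\mathcal A m=-1$ with $m(0)=m(h)=0$ and $m\ge0$ on $[0,h]$. Optional stopping of the bounded martingale $g(\bar x_{\cdot\wedge\sigma})$ gives $\P[\tau_h<T_0^{\bar x}]=g(\delta)/g(h)=(\delta/h)^{1-p\beta}=\delta^{(1-p\beta)(1-\xi/2)}$; and since $t\mapsto m(\bar x_{t\wedge\sigma})+(t\wedge\sigma)$ is a local martingale, optional stopping (after also stopping at a level $a\downarrow0$, where $m$ is smooth) together with $m\ge0$ gives $\E[\sigma]\le m(\delta)\le h^{1+p\beta}\delta^{1-p\beta}/(4(1+p\beta))$, hence by Markov's inequality $\P[\sigma>\delta^\xi]\le m(\delta)/\delta^\xi=\delta^{(1-p\beta)(1-\xi/2)}/(4(1+p\beta))$. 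Adding the two contributions yields $\P[T_0^{\bar x}>\delta^\xi]\le C\,\delta^{(1-p\beta)(1-2^{-1}\xi)}$ with $C=1+(4(1+p\beta))^{-1}$, which with the previous step proves the lemma.

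I expect the one genuinely delicate ingredient to be the calibration $h=\delta^{\xi/2}$ in the last step: this is exactly the intermediate height at which the escape probability $(\delta/h)^{1-p\beta}$ and the confinement bound $m(\delta)/\delta^\xi$ carry the \emph{same} power $\delta^{(1-p\beta)(1-\xi/2)}$, and any other scaling of $h$ would degrade the exponent (in particular one cannot work directly with $T_0^{\bar x}$, whose first moment diverges). The remaining points — the comparison against a diffusion whose drift is singular at $0$, and the measurability book‑keeping for the conditional probability given $\mathcal F_{T_\ell^\delta}$ — are routine.
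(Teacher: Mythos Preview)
Your proof is correct and follows the same reduction as the paper: you identify the colliding adjacent pair, write the SDE for its spacing, observe that the extra drift $-\gamma x-p\beta x\,\Sigma_t$ is nonpositive, and compare with the Bessel-type process $d\bar x=2\,dW+(2p\beta/\bar x)\,dt$ started at $\delta$, concluding that $T_{\ell+1}^\delta-T_\ell^\delta$ is stochastically dominated by the hitting time $T_0^{\bar x}$. The paper does exactly this.

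The only substantive difference is in how you bound $\P[T_0^{\bar x}>\delta^\xi]$. The paper simply quotes the explicit density of the first passage time to $0$ for a Bessel process of index $\nu=(p\beta-1)/2$ (their reference [yor]),
\[
p_\delta(t)=\frac{1}{\Gamma\!\left(\tfrac{1-p\beta}{2}\right)}\,\frac{1}{t}\left(\frac{\delta^2}{2t}\right)^{\!(1-p\beta)/2}e^{-\delta^2/(2t)},
\]
and integrates the tail from $\delta^\xi$ to $\infty$. Your argument instead derives the same exponent from first principles: you truncate at the intermediate level $h=\delta^{\xi/2}$, use the scale function $g(z)=z^{1-p\beta}$ to get the escape probability $(\delta/h)^{1-p\beta}$, and use the Dirichlet problem $\mathcal A m=-1$ on $(0,h)$ together with Markov's inequality to control the confinement time. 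Both contributions land on the same power $\delta^{(1-p\beta)(1-\xi/2)}$, and your remark that the calibration $h=\delta^{\xi/2}$ is exactly what balances them is correct. Your route is a bit longer but entirely self-contained, avoiding the citation to the Bessel hitting-time density; the paper's route is shorter but relies on that classical formula. Your observation that the event $\{\tau_\varepsilon^\delta\ge T_L^\delta\}$ is not actually used in the argument is also accurate: the paper invokes it only to speak of \emph{the} colliding index, whereas choosing \emph{an} adjacent colliding pair, as you do, suffices.
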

{\bf Proof.}
We know that there are no multiple collisions nor simultaneous collisions (because of Lemmas \ref{multiple_collisions} 
and \ref{2_Couples})
and therefore we can denote by $i$ the unique element such that $\lambda_i^\delta(T_\ell^\delta-)=\lambda_{i-1}^\delta(T_\ell^\delta-)$ 
and $(\lambda_i^\delta-\lambda_{i-1}^\delta)(T_\ell^\delta) = \delta$.
We have by It\^o's formula
\begin{align}
&d(\lambda_i^{\delta}-\lambda_{i-1}^{\delta})(t) 
= - \gamma  (\lambda_i^{\delta}-\lambda_{i-1}^{\delta})(t) dt   \label{edsalpha1} + \sqrt{2} (db^i_t-db^{i-1}_t) \\ 
&+2p\beta \frac{dt}{(\lambda_i^{\delta}-\lambda_{i-1}^{\delta})(t)}
-\beta p \sum_{k\neq i,i-1}\frac{(\lambda_i^{\delta}-\lambda_{i-1}^{\delta})(t)}{(\lambda_i^{\delta}-\lambda_k^{\delta})(t)(\lambda_{i-1}^{\delta}-\lambda_k^{\delta})(t)}dt\,.\notag
\end{align}
Let us define the Bessel like process $(X_t)_{t\geq 0}$ by $X_{0}=\delta$ and for $t\geq 0$, 
\begin{equation}
dX_t = \sqrt{2}(db_{T_\ell^\delta+t}^i - db_{T_\ell^\delta+t}^{i-1}) + 2 p \beta \frac{dt}{X_t}.
\end{equation}
Using the comparison theorem for SDE \cite[Proposition 2.18]{karatzas} (note that the drifts are smooth before $T_{\ell+1}^{\delta}-T_\ell^\delta$), we know that for all $t\in [0, T_{\ell +1}^\delta-T_\ell^\delta)$, we have almost surely  
 \begin{equation}
 (\lambda_i^\delta-\lambda_{i-1}^\delta)(T_\ell^\delta+t) \leq X_t.
 \end{equation}
Let us define $T_X^\delta:=\inf\{t\geq 0: X_t = 0\}$. It is clear that almost surely $T_{\ell+1}^\delta -T_\ell^\delta \leq T_X^\delta.$
We thus have on $
 \tau_\varepsilon^\delta\geq T_L^\delta$
\begin{equation*}
\P\left[\delta^\xi \leq T_{\ell+1}^\delta-T_\ell^\delta \mid \mathcal{F}_{T_\ell^\delta} \right] \leq 
\P\left[ T_X^\delta \geq \delta^\xi \right].
\end{equation*}
We finally conclude using a classical result for Bessel process, see e.g. \cite[(13)]{yor}; the density with respect to the Lebesgue measure on $\R_+$ of the law of the random variable $T_X^\delta$ is 
\begin{equation*}
p_\delta(t) = \frac{1}{\Gamma(\frac{1-p\beta}{2})}
 \frac{1}{t} \left(\frac{\delta^2}{2 t}\right)^{\frac{1-p\beta}{2}} e^{-\frac{\delta^2}{2t}}\,.
\end{equation*}
Hence we deduce that for $\xi\le 2$ there exists a constant $c>0$ such that   
$$\P\left[ T_X^\delta \geq \delta^\xi \right] \leq c \,\delta^{(1-p\beta)(1-2^{-1}\xi)} .$$ 
\qed

For time $t\in [0;T]$, we define the random set 
\begin{equation}\label{def_I}
I_t:= \{i \in \{2,\dots,d\} : |\lambda_i^\delta - \lambda_{i-1}^\delta|(t) \leq \sqrt{\varepsilon}/3  \}.
\end{equation}
Note that, on the event $\Omega := \{ \tau_{\varepsilon}^{\delta} \geq T \}$, 
for each $t\leq T$, the set $I_t$ contains at most one element. 
For each $\ell \in \{1,\dots,L\}$, and $i\in \{1,\ldots, d\}$,
 we define the stopping times 
\begin{align*}
t_\ell^\delta(\sqrt{\varepsilon}/3):= \inf\{t\geq T_\ell^\delta : \min_{ j } |\lambda_j^\delta - \lambda_{j-1}^\delta|(t) \geq\sqrt{\varepsilon}/3 \}\,,\\
\bar{t}_\ell^\delta(i,\sqrt{\varepsilon}/6):= \inf\{t\geq T_\ell^\delta : \min_{ j\neq i } |\lambda_j^\delta - \lambda_{j-1}^\delta|(t) \leq \sqrt{\varepsilon}/6 \}\,.
\end{align*}
If $i$ denotes the unique index such that $\lambda_i^\delta(T_\ell^\delta-)=\lambda_{i-1}(T_\ell^\delta-)$,
note that if $T_\ell^\delta \leq \tau_\varepsilon^\delta$ then $\min_{ j\neq i } |\lambda_j^\delta - \lambda_{j-1}^\delta|(T_\ell^\delta) \geq \sqrt{\varepsilon}/3.$
\begin{lemma} \label{tps} 
If 
$T_\ell^\delta \leq \tau_\varepsilon^\delta$ and if $i$ denotes the (unique) index such that $\lambda_i^\delta(T_\ell^\delta-) = \lambda_{i-1}^\delta(T_\ell^\delta-)$ ,
then there exists a constant $c>0$ and $\delta_0>0$ such that for all $\delta \leq \delta_0$, we have
\begin{equation}
c \delta^{1-p\beta}\leq \P\left[t_\ell^\delta(\sqrt{\varepsilon}/3) \wedge  \bar{t}_\ell^\delta(i,\sqrt{\varepsilon}/6) \leq T_{\ell+1}^\delta  | \mathcal{F}_{T_\ell^\delta}\right].
\end{equation}
\end{lemma}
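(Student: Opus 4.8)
All the estimates below are conditional on $\mathcal F_{T_\ell^\delta}$. Let $i$ be the (unique, on $\{T_\ell^\delta\le\tau_\varepsilon^\delta\}$, by Lemmas~\ref{multiple_collisions} and~\ref{2_Couples}) colliding index, so that $(\lambda_i^\delta-\lambda_{i-1}^\delta)(T_\ell^\delta)=\delta$ while every other gap exceeds $\sqrt\varepsilon/3$ at time $T_\ell^\delta$. Set $x(t):=(\lambda_i^\delta-\lambda_{i-1}^\delta)(T_\ell^\delta+t)$, which satisfies \eqref{edsalpha1} with $x(0)=\delta$. The plan is to bound $x$ from below by an explicit Bessel--type diffusion, read the factor $\delta^{1-p\beta}$ off its scale function, and transfer the corresponding event to the one in the statement. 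For the first step I would note that on $[0,\,\bar t_\ell^\delta(i,\sqrt\varepsilon/6)-T_\ell^\delta)$ every gap other than the $i$-th stays $>\sqrt\varepsilon/6$, so each denominator $(\lambda_i^\delta-\lambda_k^\delta)(\lambda_{i-1}^\delta-\lambda_k^\delta)$, $k\ne i-1,i$, is $>\varepsilon/36$ and the last (nonpositive) drift term of~\eqref{edsalpha1} is $\ge -Cx/\varepsilon$ with $C=36p\beta(d-2)$; since moreover $-\gamma x-Cx/\varepsilon=-ax$ with $a:=\gamma+C/\varepsilon$, the comparison theorem for SDEs (\cite[Proposition~2.18]{karatzas}, applied away from the singularity at $0$) gives, on that interval and before $x$ hits $0$, $x(t)\ge \underline X_t$, where $\underline X$ is the solution, killed at $0$, of
\[
d\underline X_t=\sqrt2\,\big(db^{i}_{T_\ell^\delta+t}-db^{i-1}_{T_\ell^\delta+t}\big)+\Big(\tfrac{2p\beta}{\underline X_t}-a\,\underline X_t\Big)dt,\qquad \underline X_0=\delta .
\]

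The process $\underline X$ is a mean--reverting Bessel--type diffusion of ``dimension'' $1+p\beta<2$; its scale function $s(x)=\int_0^x u^{-p\beta}e^{au^2/4}\,du$ is finite because $p\beta<1$ and obeys $s(x)=\frac{x^{1-p\beta}}{1-p\beta}(1+o(1))$ as $x\downarrow 0$. Hence there are $c_1=c_1(\varepsilon,p,\beta,\gamma)>0$ and $\delta_0>0$ such that for $\delta\le\delta_0$
\[
\P\big[\,\underline X\ \text{reaches}\ \sqrt\varepsilon/3\ \text{before}\ 0\ \big|\ \mathcal F_{T_\ell^\delta}\,\big]=\frac{s(\delta)}{s(\sqrt\varepsilon/3)}\ \ge\ c_1\,\delta^{1-p\beta}.
\]

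It then remains to show this event is contained, up to a further constant factor, in the event of the Lemma. On it, let $s^\ast<\infty$ be the first time $\underline X=\sqrt\varepsilon/3$, so $\underline X>0$ on $[0,s^\ast]$. If $\bar t_\ell^\delta-T_\ell^\delta\le s^\ast$, the comparison forces $x>0$ on $[0,\bar t_\ell^\delta-T_\ell^\delta]$, hence no collision on $[T_\ell^\delta,\bar t_\ell^\delta]$ and $\bar t_\ell^\delta\le T_{\ell+1}^\delta$, i.e.\ the event of the Lemma. If instead $\bar t_\ell^\delta-T_\ell^\delta> s^\ast$, the comparison holds up to $s^\ast$, so $x(s^\ast)\ge\sqrt\varepsilon/3$ and $x$ first reaches $\sqrt\varepsilon/3$ at some $\sigma_1\le T_\ell^\delta+s^\ast<T_{\ell+1}^\delta$, at which time all gaps exceed $\sqrt\varepsilon/6$ and the $i$-th equals $\sqrt\varepsilon/3$. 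From such a macroscopic configuration I would invoke a standard support/Girsanov bound: while all gaps stay $\ge\sqrt\varepsilon/6$ and the eigenvalues remain in a fixed compact set (the latter of probability close to $1$ by Lemma~\ref{lemCepa}), the drift of $\lambda^\delta$ is bounded, so with probability at least some $c_2>0$ -- uniform over such configurations -- all gaps rise above $\sqrt\varepsilon/3$ simultaneously within a bounded time and before any collision, i.e.\ $t_\ell^\delta(\sqrt\varepsilon/3)\le T_{\ell+1}^\delta$. Adding the contribution of the first case to that of the second (strong Markov property at $\sigma_1$) and using the displayed estimate gives
\[
\P\big[\,t_\ell^\delta(\sqrt\varepsilon/3)\wedge\bar t_\ell^\delta(i,\sqrt\varepsilon/6)\le T_{\ell+1}^\delta\ \big|\ \mathcal F_{T_\ell^\delta}\,\big]\ \ge\ c_2\,c_1\,\delta^{1-p\beta}=:c\,\delta^{1-p\beta}.
\]

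The hard part is this last transfer. The time the $i$-th gap needs to grow to a macroscopic size is of order $\varepsilon$, the same order as the time over which the other gaps may move by $\sqrt\varepsilon$, so one cannot keep all other gaps $\ge\sqrt\varepsilon/3$ throughout the excursion; the stopping time $\bar t_\ell^\delta(i,\sqrt\varepsilon/6)$ is precisely what catches the event that a neighbouring gap is squeezed down (at rate $\sim 1/x$) while the $i$-th gap is small. The technical price is that the $c_2$--bound at the macroscopic configuration must be made genuinely uniform in the random positions of the remaining eigenvalues, which is what requires the compact cutoff from Lemma~\ref{lemCepa} together with the strong Markov property.
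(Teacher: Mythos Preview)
Your approach is correct but takes a more circuitous route than the paper's. Both proofs hinge on the scale--function exponent $\alpha=1-p\beta$, but the paper exploits it more directly: it applies It\^o's formula to $x^\alpha$ where $x=\lambda_i^\delta-\lambda_{i-1}^\delta$, so that the singular $2p\beta/x$ drift is exactly cancelled by the It\^o correction. On $[T_\ell^\delta,\,t_\ell^\delta(\sqrt\varepsilon/3)\wedge\bar t_\ell^\delta(i,\sqrt\varepsilon/6)\wedge T_{\ell+1}^{\delta,\kappa}]$ the remaining drift is bounded below by $-c'x^\alpha$, so taking expectations and Gr\"onwall give $\E\big[x^\alpha(\tau)\big]\ge \delta^\alpha e^{-c'T}$ with $\tau$ the triple--stopped time. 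The crucial observation is that on $\{T_{\ell+1}^\delta<t_\ell^\delta\wedge\bar t_\ell^\delta\}$ the collapsing gap must be the $i$-th one (the others are kept $\ge\sqrt\varepsilon/6$ by $\bar t_\ell^\delta$), so $x^\alpha(T_{\ell+1}^{\delta,\kappa})=\kappa^\alpha\to 0$. Hence only the indicator $\mathbf 1_{\{t_\ell^\delta\wedge\bar t_\ell^\delta\le T_{\ell+1}^\delta\}}$ survives in the limit, and bounding $x^\alpha\le(\sqrt\varepsilon/3)^\alpha$ on that event converts the expectation into the desired probability lower bound---no comparison theorem, no support/Girsanov transfer.

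Your route encodes the same exponent via the scale function of the comparison process $\underline X$, which is perfectly valid, but then you must relate the event $\{\underline X$ hits $\sqrt\varepsilon/3$ before $0\}$ to the event in the lemma. As you correctly identify, this transfer is nontrivial: in your Case~2 the $i$-th gap has reached $\sqrt\varepsilon/3$ while the remaining gaps are only known to be $\ge\sqrt\varepsilon/6$, and you still need to push \emph{all} gaps above $\sqrt\varepsilon/3$ before the next collision. Your support/Girsanov argument for this step is plausible (bounded drift on the region where all gaps exceed $\sqrt\varepsilon/12$, compactness from Lemma~\ref{lemCepa}, strong Markov at $\sigma_1$), but it is genuinely additional work that the paper's approach sidesteps entirely by working with the expectation of $x^\alpha$ stopped at the actual triple stopping time. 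In short: your method would work once the uniform $c_2$ is made rigorous, but the paper's direct It\^o computation on $x^{1-p\beta}$ is both shorter and avoids the loose end.
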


{\bf Proof.}
Note that $i$ is the unique element of the set $I_{T_\ell^\delta}$ defined by \eqref{def_I} for which $|\lambda_i^\delta-\lambda_{i-1}^\delta|(T_\ell^\delta) = \delta$.
For $\alpha = 1 - p \beta$ and $t\in [T_\ell^\delta;T_{\ell+1}^\delta)$, we have by It\^o's formula
\begin{align}
&d(\lambda_i^{\delta}-\lambda_{i-1}^{\delta})^\alpha(t) 
= - \gamma \alpha (\lambda_i^{\delta}-\lambda_{i-1}^{\delta})^\alpha(t) dt   \label{edsalpha2}  \\ 
&+\alpha (\lambda_i^{\delta}-\lambda_{i-1}^{\delta})^{\alpha -1}(t) \, \sqrt{2} (db^i_t-db^{i-1}_t) 
-\beta p \sum_{k\neq i,i-1}\frac{(\lambda_i^{\delta}-\lambda_{i-1}^{\delta})^\alpha(t)}{(\lambda_i^{\delta}-\lambda_k^{\delta})(t)(\lambda_{i-1}^{\delta}-\lambda_k^{\delta})(t)}dt\,.\notag
\end{align}
For $t\in [T_\ell^\delta, \tau_\varepsilon^\delta]$, we deduce that

$$d (\lambda_i^{\delta}-\lambda_{i-1}^{\delta})^\alpha(t) 
\ge \alpha
 (\lambda_i^{\delta}-\lambda_{i-1}^{\delta})^{\alpha -1}(t)
 \, \sqrt{2} (db^i_t-db^{i-1}_t) -c' (\lambda_i^{\delta}-\lambda_{i-1}^{\delta})^\alpha(t)dt $$
where $c' = \alpha \gamma+\beta p (d-2) 36/\varepsilon$.
Let $T^{\delta,\kappa}_{\ell+1}$ be the first time after $T_\ell^\delta$
so that $\lambda_i^\delta-\lambda_i^{\delta-1}$ reaches $\kappa<\delta$.
Then, as $\int_0^{.\wedge T^{\delta,\kappa}_{\ell+1}}   (\lambda_i^{\delta}-\lambda_{i-1}^{\delta})^{\alpha -1}(t)
 \, \sqrt{2} (db^i_t-db^{i-1}_t)$ is  a martingale,  we find  that
\begin{align}\label{lambda_i - lambda_{i-1}}
\E\left[ (\lambda_i^{\delta}-\lambda_{i-1}^{\delta})^\alpha(t_\ell^\delta(\sqrt{\varepsilon}/3) \wedge  \bar{t}_\ell^\delta(i,\sqrt{\varepsilon}/6)
 \wedge T_{\ell+1}^{\delta,\kappa}) \, \mid 
\mathcal{F}_{T_\ell^{\delta,\kappa}}\right]  
\geq 
\delta^\alpha \exp\left(-c' \,T \right).
\end{align}
Before time $ \bar{t}_\ell^\delta(i,\sqrt{\varepsilon}/6)$, $(\lambda_j^\delta - \lambda_{j-1}^\delta)(t)$ can not cancel if $j\neq i$. Therefore we can choose $\kappa$ small enough so that the last inequality implies

\begin{align*}
\E\left[ (\lambda_i^{\delta}-\lambda_{i-1}^{\delta})^\alpha(t_\ell^\delta(\sqrt{\varepsilon}/3) \wedge \bar{t}_\ell^\delta(i,\sqrt{\varepsilon}/6) ) \, 
1_{\{t_\ell^\delta(\sqrt{\varepsilon}/3) \wedge \bar{t}_\ell^\delta(i,\sqrt{\varepsilon}/6) \leq T_{\ell+1}^\delta \}}
\mid \mathcal{F}_{T_\ell^\delta}\right]  
\geq \frac 1 2
\delta^\alpha \exp\left(-c' \,T \right).
\end{align*}
which can be  rewriten using the fact that $|\lambda_i^\delta - \lambda_{i-1}^\delta |(t_\ell^\delta(\sqrt{\varepsilon}/3) \wedge \bar{t}_\ell^\delta(i,\sqrt{\varepsilon}/6)) \leq \sqrt{\varepsilon}/3$, as follows
\begin{equation*}
\P\left[t_\ell^\delta(\sqrt{\varepsilon}/3) \wedge \bar{t}_\ell^\delta(i,\sqrt{\varepsilon}/6) \leq T_{\ell+1}^\delta \mid \mathcal{F}_{T_\ell^\delta}\right] \geq 
\delta^\alpha \, \left(\frac{3}{\sqrt{\varepsilon}}\right)^{\alpha}    
\exp(-c'\,T) \,.
\end{equation*}
The lemma follows with $c= (\frac{3}{\sqrt{\varepsilon}})^{\alpha} \exp(-c'\,T)$.
\qed

\begin{lemma}\label{lowerbound}
Let $\xi, T >0$. There exists  a constant $c>0$  and $\delta_0>0$ 
so that
if $\delta\le \delta_0$, on  $T_\ell^\delta\leq \tau_\varepsilon^\delta\wedge T$,
\begin{equation}
 \P\left[\delta^\xi \leq T_{\ell+1}^\delta-T_\ell^\delta
\mid \mathcal{F}_{T_\ell^\delta} \right] \geq c \delta^{1-p\beta}\,.
\end{equation}
\end{lemma}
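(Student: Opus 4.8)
The plan is to derive the bound from Lemma~\ref{tps}, upgrading the event ``a macroscopic separation or the onset of another near-collision happens before $T_{\ell+1}^\delta$'' to ``$T_{\ell+1}^\delta$ is at least $\delta^\xi$ away from $T_\ell^\delta$''. Throughout one works on $\{T_\ell^\delta\le\tau_\varepsilon^\delta\wedge T\}$ and conditions on $\mathcal F_{T_\ell^\delta}$, so that $i$ is the unique index with $(\lambda_i^\delta-\lambda_{i-1}^\delta)(T_\ell^\delta)=\delta$ while $\min_{j\ne i}(\lambda_j^\delta-\lambda_{j-1}^\delta)(T_\ell^\delta)\ge\sqrt\varepsilon/3$. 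Set $\sigma:=t_\ell^\delta(\sqrt\varepsilon/3)\wedge\bar t_\ell^\delta(i,\sqrt\varepsilon/6)$. Lemma~\ref{tps} gives $\P[\sigma\le T_{\ell+1}^\delta\mid\mathcal F_{T_\ell^\delta}]\ge c\,\delta^{1-p\beta}$, and since $\{\sigma\le T_{\ell+1}^\delta\}\cap\{\sigma\ge T_\ell^\delta+\delta^\xi\}\subseteq\{T_{\ell+1}^\delta-T_\ell^\delta\ge\delta^\xi\}$ (on the left $T_{\ell+1}^\delta\ge\sigma\ge T_\ell^\delta+\delta^\xi$), it is enough to prove that
\[
\P\big[\,\sigma<T_\ell^\delta+\delta^\xi\ \big|\ \mathcal F_{T_\ell^\delta}\,\big]=o(\delta^{1-p\beta})\quad(\delta\downarrow0),
\]
for then $\P[T_{\ell+1}^\delta-T_\ell^\delta\ge\delta^\xi\mid\mathcal F_{T_\ell^\delta}]\ge c\,\delta^{1-p\beta}-o(\delta^{1-p\beta})\ge\tfrac c2\,\delta^{1-p\beta}$ for $\delta$ small.

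To estimate $\P[\sigma<T_\ell^\delta+\delta^\xi\mid\mathcal F_{T_\ell^\delta}]$ I distinguish the two stopping times. If $t_\ell^\delta(\sqrt\varepsilon/3)<T_\ell^\delta+\delta^\xi$, then $(\lambda_i^\delta-\lambda_{i-1}^\delta)$, starting from $\delta$, must reach the macroscopic level $\sqrt\varepsilon/3$ within time $\delta^\xi$; using the comparison $(\lambda_i^\delta-\lambda_{i-1}^\delta)(T_\ell^\delta+\cdot)\le X_{\cdot}$ of Lemma~\ref{upperbound} (with $X$ the Bessel-type process started at $\delta$), writing $X_t^2=\delta^2+c_0t+M_t$ with $\langle M\rangle$ controlled by $X^2$, stopping at the hitting time of $\sqrt\varepsilon/3$, and applying the exponential martingale inequality \cite[Corollary H.13]{AGZ} together with Brownian scaling, one gets $\P[\,X\text{ hits }\sqrt\varepsilon/3\text{ before }\delta^\xi\,]\le e^{-c\varepsilon\delta^{-\xi}}=o(\delta^{1-p\beta})$. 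If instead $\bar t_\ell^\delta(i,\sqrt\varepsilon/6)<T_\ell^\delta+\delta^\xi$, then some gap $x_j:=\lambda_j^\delta-\lambda_{j-1}^\delta$, $j\ne i$, decreases by at least $\sqrt\varepsilon/6$ on $[T_\ell^\delta,T_\ell^\delta+\delta^\xi]$ before $\tau_\varepsilon^\delta\wedge\sigma$. On that interval the gap $i$ is the only small one, so, writing the drift of $x_j$ in telescoping form $-p\beta\sum_{k\ne j-1,j}\big(\tfrac1{\lambda_{j-1}^\delta-\lambda_k^\delta}-\tfrac1{\lambda_j^\delta-\lambda_k^\delta}\big)$, this drift is bounded (by a constant times $\varepsilon^{-1}$, using \eqref{zx} for $\lambda^\delta$, which follows from Lemmas~\ref{lemjk} and \ref{lemCepa}) except for a term of size $O\big(1/(\lambda_i^\delta-\lambda_{i-1}^\delta)\big)$, present only when $j\in\{i-1,i+1\}$, i.e. only through the interaction of $\lambda_{i\pm1}^\delta$ with the colliding pair. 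The time integral of that singular term over $[T_\ell^\delta,T_\ell^\delta+\delta^\xi]$ is bounded by a constant times $\int_0^{\delta^\xi}dt/X_t$, which by Bessel scaling has moments of order $(\delta^{\xi/2})^k$ (up to combinatorial factors), hence is $\le\sqrt\varepsilon/12$ off an event of probability $o(\delta^{1-p\beta})$; combined with a Gaussian bound on the martingale part $\sqrt2\int(db_t^j-db_t^{j-1})$ of $x_j$, this rules out a drop of $x_j$ by $\sqrt\varepsilon/6$ in time $\delta^\xi$ off an event of probability $o(\delta^{1-p\beta})$. Putting the two cases together proves the displayed estimate.

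The main obstacle is precisely this second case: the colliding pair repels itself with a drift of order $1/(\lambda_i^\delta-\lambda_{i-1}^\delta)$, and one must check that this singular repulsion cannot drag either neighbour $\lambda_{i\pm1}^\delta$ into a collision within the short window $\delta^\xi$. What makes it work is that the effect of the singular drift on the neighbours, once integrated in time, is governed by $\int_0^{\delta^\xi}dt/(\lambda_i^\delta-\lambda_{i-1}^\delta)$, a quantity of order $\delta^{\xi/2}$ with rapidly decaying tails — far below the macroscopic threshold $\sqrt\varepsilon$ — so the neighbours stay well separated; the Bessel comparison of Lemma~\ref{upperbound} and the integrability estimate of Lemma~\ref{lemCepa} are exactly the tools that make this quantitative.
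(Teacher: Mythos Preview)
Your overall architecture matches the paper's: invoke Lemma~\ref{tps}, then show that the ``bad'' event --- where $\sigma=t_\ell^\delta(\sqrt\varepsilon/3)\wedge\bar t_\ell^\delta(i,\sqrt\varepsilon/6)$ occurs too soon --- has probability $o(\delta^{1-p\beta})$. Your treatment of the first case (gap $i$ reaching $\sqrt\varepsilon/3$ within $\delta^\xi$) via the Bessel upper bound on $X_t^2$ is fine, and in fact differs from the paper, which instead bounds the probability that a collision follows $t_\ell^\delta(\sqrt\varepsilon/3)$ within $\delta^\xi$.

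The genuine gap is in your second case. The comparison $(\lambda_i^\delta-\lambda_{i-1}^\delta)\le X$ from Lemma~\ref{upperbound} gives $\int_0^{\delta^\xi}\!dt/(\lambda_i^\delta-\lambda_{i-1}^\delta)\ \ge\ \int_0^{\delta^\xi}\!dt/X_t$, not $\le$; so it cannot be used to \emph{upper}-bound the singular drift acting on the neighbouring gap. Worse, the true size of $\int_0^{\delta^\xi}\!dt/x_i$ (with $x_i=\lambda_i^\delta-\lambda_{i-1}^\delta$) is not $O(\delta^{\xi/2})$: integrating the SDE~\eqref{edsalpha1} on $[T_\ell^\delta,T_\ell^\delta+s]$ gives
\[
2p\beta\!\int_0^{s}\!\frac{dt}{x_i}\;=\;x_i(T_\ell^\delta+s)-\delta\;-\;\sqrt2\,[b^i-b^{i-1}]_0^s\;+\;O(\delta^\xi/\varepsilon),
\]
and before $t_\ell^\delta(\sqrt\varepsilon/3)$ one only has $x_i\le\sqrt\varepsilon/3$, so $p\beta\int_0^{\delta^\xi}\!dt/x_i$ can be as large as $\sqrt\varepsilon/6+o(\sqrt\varepsilon)$. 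This is \emph{exactly} the amount by which the adjacent gap $x_{i+1}$ (or $x_{i-1}$) is allowed to drop, so your inequality does not rule out $\bar t_\ell^\delta(i,\sqrt\varepsilon/6)<T_\ell^\delta+\delta^\xi$.

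The paper circumvents this by not tracking $\int dt/x_i$ at all: it appeals directly to (the proof of) Lemma~\ref{lambda_holder}, which controls $\sup_s|\lambda_j^\delta(s)-\lambda_j^\delta(T_\ell^\delta)|$ for \emph{every} $j$ (including $j=i-1$) by passing to the pair $\big(\lambda_i^\delta+\lambda_{i-1}^\delta,\ x_i^2\big)$, both of which have bounded drift. That is the missing idea --- replacing control of $\int dt/x_i$ by control of $x_i^2$ is what makes the borderline constant disappear and yields the exponential bound $C\exp(-c\varepsilon^2/\delta^\xi)=o(\delta^{1-p\beta})$.
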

{\bf Proof.}We assume in the sequel that $\delta\le 1$. The proof is based on 
 Lemma \ref{tps}.  It implies 
\begin{align*}
\P&\left[\delta^\xi \leq T_{\ell+1}^\delta-T_\ell^\delta \mid \mathcal{F}_{T_\ell^\delta} \right] \\ 
&\geq 
\P\left[ 
t_\ell^\delta(\sqrt{\varepsilon}/3) \wedge  \bar{t}_\ell^\delta(i,\sqrt{\varepsilon}/6) \leq T_{\ell+1}^\delta ;
\delta^\xi \leq T_{\ell+1}^\delta- T_\ell^\delta\le 1 \mid \mathcal{F}_{T_\ell^\delta} \right]\,.  
\end{align*} 
By Lemma \ref{tps}, we deduce that
\begin{align*}
\P&\left[\delta^\xi \leq T_{\ell+1}^\delta-T_\ell^\delta \le  1 \mid \mathcal{F}_{T_\ell^\delta} \right] \\ 
&\geq c \delta^{1-p\beta} - 
\P\left[ t_\ell^\delta(\sqrt{\varepsilon}/3) \wedge  \bar{t}_\ell^\delta(i,\sqrt{\varepsilon}/6) \leq T_{\ell+1}^\delta\leq T+1 ; 
\delta^\xi \geq T_{\ell+1}^\delta- T_\ell^\delta \mid \mathcal{F}_{T_\ell^\delta} \right]\,.
\end{align*} 
But 
\begin{align*}
\P&\left[ t_\ell^\delta(\sqrt{\varepsilon}/3) \wedge  \bar{t}_\ell^\delta(i,\sqrt{\varepsilon}/6) \leq T_{\ell+1}^\delta\le T+1 ; 
T_{\ell+1}^\delta- T_\ell^\delta \leq \delta^\xi  \mid \mathcal{F}_{T_\ell^\delta} \right] \\
&\leq \P\left[ t_\ell^\delta(\sqrt{\varepsilon}/3) \leq 
\wedge T+1   ;  T_{\ell+1}^\delta- t_\ell^\delta(\sqrt{\varepsilon}/3) \leq \delta^{\xi}
 \mid \mathcal{F}_{T_\ell^\delta}  \right] \\ 
 &+ 
 \P\left[  \bar{t}_\ell^\delta(i,\sqrt{\varepsilon}/6) \leq t_\ell^\delta(\sqrt{\varepsilon}/3) ;   \bar{t}_\ell^\delta(i,\sqrt{\varepsilon}/6) - T_\ell^\delta 
\leq \delta^{\xi}
 \mid \mathcal{F}_{T_\ell^\delta} \right]\,.
\end{align*}
Let us handle the first term of the previous right hand side 
\begin{align*}
\P&\left[ t_\ell^\delta(\sqrt{\varepsilon}/3) \leq  T_{\ell+1}^\delta\wedge (T+1) ;  T_{\ell+1}^\delta- t_\ell^\delta(\sqrt{\varepsilon}/3) \leq \delta^{\xi}
 \mid \mathcal{F}_{t_\ell^\delta(\sqrt{\varepsilon}/3)}  \right]\\
 &\leq \P\left[\max_j \sup_{t_\ell^\delta(\sqrt{\varepsilon}/3)\leq s \leq (t_\ell^\delta(\sqrt{\varepsilon}/3) + \delta^\xi) 
 \wedge t_\ell^\delta(\sqrt{\varepsilon}/12)\wedge (T+1) }
  | \lambda_j^\delta(s) - \lambda_j^\delta(t_\ell^\delta(\sqrt{\varepsilon}/3))| \geq \frac{\sqrt{\varepsilon}}{24}  
  \mid \mathcal{F}_{t_\ell^\delta(\sqrt{\varepsilon}/3)} \right]\\
  &\leq C \exp(-\frac{c\varepsilon^2}{\delta^{\xi}})
\end{align*}
where we used Lemma \ref{lambda_holder} for the last line (actually the proof since we used the estimate for a fixed $s$). For the second term, the idea is similar
\begin{align*}
 \P&\left[  \bar{t}_\ell^\delta(i,\sqrt{\varepsilon}/6) \leq t_\ell^\delta(\sqrt{\varepsilon}/3) ;   \bar{t}_\ell^\delta(i,\sqrt{\varepsilon}/6) - T_\ell^\delta 
\leq \delta^{\xi}
 \mid \mathcal{F}_{T_\ell^\delta} \right]\\
 &\leq \P\left[\max_{j\neq i} \sup_{T_\ell^\delta \leq s \leq (T_\ell^\delta+\delta^\xi)\wedge 
 \bar{t}_\ell^\delta(i,\sqrt{\varepsilon}/6) \wedge (T+1)}
 |\lambda_j^\delta(s) -\lambda_j^\delta(T_\ell^\delta)| \geq \frac{\sqrt{\varepsilon}}{12}  \mid \mathcal{F}_{T_\ell^\delta} \right] \\
& \leq  C \exp(-\frac{c\varepsilon^2}{\delta^{\xi}})\,,
\end{align*}
by Lemma \ref{lambda_holder}. As for all $\xi>0$, $ \exp(-\frac{c}{\delta^{\xi/4}})\ll \delta^{1-p\beta}$ for small enough $\delta$, the proof is complete.
\qed
\section{Properties of the eigenvalues of $M^\beta_n$}\label{eigenvalues_of_M_n}
In this section, we will study the regularity and boudedness properties
of the eigenvalues of $M^\beta_n$.

\begin{definition}\label{def_lambda_n}
Let $M_0^\beta$ be  a symmetric (resp. Hermitian)
matrix  if $\beta=1$ (resp. $\beta=2$)
with distinct eigenvalues $\lambda_1 < \lambda_2 < \dots < \lambda_d$ and $(M_n^\beta(t))_{t\geq 0}$ be the matrix process defined in Definition \ref{defM}.
For all $t\geq 0$, the ordered eigenvalues of the matrix $M_n^\beta(t)$ will be denoted by $\lambda_1^n(t)\leq \lambda_2^n(t) \leq \dots \leq \lambda_d^n(t).$
\end{definition}
%
The following proposition characterizes the evolution of the process $\lambda^n(t)$ until its first collision time. 
\begin{proposition}\label{repr_eigenvalues}
Let $(\lambda_1^n(t),\dots,\lambda_d^n(t))$ be  the process defined in Definition \ref{def_lambda_n} and set $T_n(1):=\inf\{t\geq 0: \exists i \neq j, \lambda_i^n(t)=\lambda_j^n(t)\}$.
Then, almost surely, the process $(\lambda_1^n(t),\dots,\lambda_d^n(t))$ verifies for every $k\in \N$, the following strict inequality 
\begin{equation}\label{strict_ineq}
\lambda_1^n(k/n)< \lambda_2^n(k/n) < \dots < \lambda_d^n(k/n)\,.
\end{equation} 
In addition, there exist a sequence of Bernoulli random variables $(\epsilon_k^n)_{k\in \N}$ with mean $p$  and a sequence of independent (standard) Brownian motions 
$(b_t^i)_{t\geq 0},\, i \in \{1,\dots,d\}$ also independent of the Bernoulli random variables $(\epsilon_k^n)_{k\in \N}$ such that, the process 
$(\lambda_1^n(t),\dots,\lambda_d^n(t))_{t\geq 0}$ is the re-ordering of the process $(\mu_1^n(t),\dots,\mu_d^n(t))_{t\geq 0}$ defined for $t\geq 0$ by
\begin{equation}\label{defmu}
d\mu_i^n(t) = - \gamma \mu_i^n(t)\,dt + \sqrt{2} db_t^i + \beta\sum_{j\neq i} \frac{\epsilon_t^n}{\mu_i^n(t)-\mu_j^n(t)} \, dt\,.
\end{equation}
with initial conditions in $t=0$ given by $(\mu_1^n(0),\dots,\mu_d^n(0))=(\lambda_1,\dots,\lambda_d)$.
In particular, up to time $T_n(1)$, the process $\lambda^n$ verifies 
\begin{equation*}
d\lambda_i^n(t) = - \gamma \lambda_i^n(t)\,dt + \sqrt{2} db_t^i + \beta\sum_{j\neq i} \frac{\epsilon_t^n}{\lambda_i^n(t)-\lambda_j^n(t)} \, dt\,.
\end{equation*}
\end{proposition}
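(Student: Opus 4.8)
The plan is to analyze the matrix-valued SDE of Definition~\ref{defM} on each dyadic interval $[k/n;(k+1)/n)$, where the Bernoulli coin $\epsilon_k^n$ is frozen, and to apply the standard perturbation theory for eigenvalues of symmetric/Hermitian matrices. First I would establish the strict inequality \eqref{strict_ineq}: I work by induction on $k$. Given that the eigenvalues of $M_n^\beta(k/n)$ are almost surely distinct (true for $k=0$ by hypothesis), on the interval $[k/n;(k+1)/n)$ the process is an It\^o diffusion in $\Ha_d^\beta$ with a \emph{fixed} realization of $\epsilon_k^n$. If $\epsilon_k^n=0$, then $M_n^\beta$ evolves as $dM = -\gamma M\,dt + \sqrt2\sum_i\chi_i^n(k/n)\,dB^i_t$, which keeps the eigenvectors fixed and moves the eigenvalues by independent one-dimensional Ornstein--Uhlenbeck-type Brownian motions $d\lambda_i = -\gamma\lambda_i\,dt + \sqrt2\,db^i_t$; two such independent diffusions started at distinct points meet only on a Lebesgue-null set of times, and in particular almost surely not at the deterministic time $(k+1)/n$. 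If $\epsilon_k^n=1$, then $M_n^\beta$ evolves as an Ornstein--Uhlenbeck-type Hermitian (resp.\ symmetric) Brownian motion; by the classical computation recalled in \cite{AGZ} the eigenvalues satisfy the Dyson system $d\lambda_i = -\gamma\lambda_i\,dt+\sqrt2\,db^i_t+\beta\sum_{j\ne i}(\lambda_i-\lambda_j)^{-1}\,dt$ with $\beta=1$ or $2$, whose repulsive drift guarantees no collisions at all in finite time; in either case the eigenvalues at time $(k+1)/n$ are a.s.\ distinct. Chaining the induction over $k$ gives \eqref{strict_ineq}.

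Next I would assemble the global SDE. On each interval $[k/n;(k+1)/n)$, whichever value the coin takes, the ordered eigenvalues $\lambda_i^n$ coincide (no crossings occur strictly inside the interval when $\epsilon_k^n=1$ by repulsion, and when $\epsilon_k^n=0$ the labels carried by the eigenvectors may in principle cross, which is precisely why I pass to the unordered auxiliary process $\mu^n$). Define $\mu^n$ on $[k/n;(k+1)/n)$ by continuing the solution of \eqref{defmu} with the frozen coin value: the point is that \eqref{defmu} with $\epsilon^n_t=0$ is just the decoupled Ornstein--Uhlenbeck Brownian system (eigenvectors fixed), and with $\epsilon^n_t=1$ it is exactly the Dyson system computed from the Hermitian Brownian increment. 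Standard eigenvalue perturbation theory (the eigenprojector $\chi_i^n$ is smooth in $M$ as long as $\lambda_i$ is simple, which holds on the open interval by the repulsion/continuity argument) shows that the eigenvalues of $M_n^\beta$ on $[k/n;(k+1)/n)$ are, as an unordered set, a solution of \eqref{defmu} driven by a suitable family of Brownian motions $b^i$: the driving Brownian motions are obtained from $B^i$ (case $\epsilon=0$) or from the off-diagonal/diagonal entries of $H^\beta$ projected onto the current eigenbasis (case $\epsilon=1$), and in both cases L\'evy's characterization identifies the resulting continuous local martingales, each of quadratic variation $2t$ and mutually orthogonal, as independent standard Brownian motions. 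Splicing these pieces across all $k$ and using the strong Markov property at the deterministic times $k/n$ (at which the eigenvalues are a.s.\ simple by \eqref{strict_ineq}) produces the global process $\mu^n$ solving \eqref{defmu} with a single family of independent Brownian motions $(b^i)$ independent of $(\epsilon_k^n)$; then $\lambda^n$ is its reordering by definition of $M_n^\beta$, and up to $T_n(1)$ the ordering is constant so $\lambda^n$ itself solves the displayed SDE.

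The main obstacle I expect is the careful matching of the driving Brownian motions across intervals so that a \emph{single}, coin-independent family $(b^i_t)_{t\ge0}$ works globally: on each interval the natural Brownian motions are built out of different pieces of the input noise (the $B^i$ versus the projected entries of $H^\beta$), and one must check that the concatenation is still a Brownian motion adapted to a common filtration and independent of the Bernoulli sequence. This is handled by L\'evy's characterization applied to the filtration generated by all the input noise together with the $\epsilon_k^n$: the candidate $b^i$ is a continuous local martingale with bracket $\langle b^i,b^j\rangle_t=\delta_{ij}\,2t$ deterministically, hence a Brownian motion, and since the $\epsilon_k^n$ enter only through which \emph{deterministic} linear combination of increments is selected on each frozen interval, the resulting $b^i$ is independent of $(\epsilon_k^n)$. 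A secondary technical point is justifying the use of It\^o's formula for the eigenvalues/eigenprojectors only on the open interval where simplicity holds, and controlling the behaviour at the endpoints $k/n$ via continuity of $M_n^\beta$ and the a.s.\ simplicity \eqref{strict_ineq}; this is routine given the perturbation estimates in \cite{AGZ}.
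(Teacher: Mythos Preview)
Your proposal is correct and follows essentially the same route as the paper: induction on $k$ with the $\epsilon_k^n=1$/$\epsilon_k^n=0$ case split (Dyson repulsion versus independent Ornstein--Uhlenbeck processes, distinct at the deterministic endpoint), followed by splicing the piecewise SDEs into the global system \eqref{defmu}. The one point where the paper is more direct than your sketch is precisely the ``main obstacle'' you flag: rather than arguing independence of $(b^i)$ from $(\epsilon_k^n)$ via L\'evy's characterization in an enlarged filtration, the paper simply sets $b_t^i=\int_0^t(\epsilon_s^n\,dW_s^i+(1-\epsilon_s^n)\,dB_s^i)$ and observes the elementary distributional fact that for $s,t\in[k/n,(k+1)/n]$ the increment $\epsilon_k^n(W_t^i-W_s^i)+(1-\epsilon_k^n)(B_t^i-B_s^i)$ is independent of $\epsilon_k^n$ because the two summands are i.i.d.\ and independent of the coin; this yields the global independence immediately. (Minor slip: your brackets should read $\langle b^i,b^j\rangle_t=\delta_{ij}\,t$, not $2t$; the $\sqrt2$ sits outside $db^i_t$.)
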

Remark here that we use the property that $\epsilon^n_t=(\epsilon^n_t)^2$.

\noindent
{\bf Proof.}
Let us show first that for each $k\in \N$ such that $k/n<T_n(1)$, we have almost surely the strict inequality \eqref{strict_ineq}.
We will proceed by induction over $k$. Note that under our assumptions, it is true for $k=0$. Suppose it is true at rank $k$ and let us show it is then true at rank $k+1$.
From Definition \ref{defM}, if the eigenvalues of $M_n^\beta(k/n)$ are denoted as $\lambda_1^n(k/n) < \dots < \lambda_d^n(k/n)$,
then, depending on the value of the Bernoulli random variable $\epsilon_k^n$, the dynamic for $t\in [k/n;(k+1)/n]$ is
\begin{itemize}
\item if $\epsilon_k^n =1$, the process $(\lambda_1^n(t), \dots, \lambda_d^n(t))$ follows the Dyson Brownian motion with initial conditions 
$(\lambda_1^n(k/n), \dots, \lambda_d^n(k/n))$ (see \cite[Theorem 4.3.2]{AGZ}); More precisely, we have  
for $t\in [k/n;(k+1)/n)$  
\begin{equation*}
d\lambda_i^n(t) = - \gamma \lambda_i^n(t)\,dt + \sqrt{2} dW_t^i + \beta\sum_{j\neq i} \frac{dt}{\lambda_i^n(t)-\lambda_j^n(t)}\,. 
\end{equation*}
where the $(W_t^i)_{t\geq 0},i\in \{1,\dots,d\}$ are independent Brownian motions. 
In particular,
 this process is non-colliding in the sense that the $\lambda_i^n(t)$ will almost surely remain strictly ordered for all $t\in [k/n;(k+1)/n)$ 
(see \cite[Theorem 4.3.2]{AGZ}). Thus, we will almost surely have $\lambda_1^n((k+1)/n)<\dots< \lambda_d^n((k+1)/n)$.
\item on the other hand, if $\epsilon_k^n =0$, we need to define a new process $(\mu_1^n(t),\dots,\mu_d^n(t))$ of independent Ornstein�-Uhlenbeck processes 
with initial conditions $(\lambda_1^n(k/n), \dots, \lambda_d^n(k/n))$; More precisely, the evolution for $t\in [k/n;(k+1)/n]$ is given by 
\begin{equation}\label{lambda_is_BM}
d\mu_i^n(t) = -\gamma \mu_i^n(t) dt + \sqrt{2}dB_t^i
\end{equation}
where the Brownian motions $B^i$ are the ones of Definition \ref{defM}. Note that, before time $T_n(1)$, the two processes 
$\lambda^n$ and $\mu^n$ coincide. 
In this case, the $\mu_i^n(t)$ can cross and the ordering can be broken in the interval $[k/n;(k+1)/n]$. However, if crossing for the process $\mu^n$ happen 
before time $t=(k+1)/n$ still we know that 
 $e^{\gamma(k+1)/n} \mu_i^n((k+1)/n)
$ are almost surely distinct. The re-ordering of the $\mu_i^n$ thus always gives
$\lambda_1^n((k+1)/n) < \dots < \lambda_d^n((k+1)/n)\,  a.s.$
\end{itemize}
The induction is complete and proves equality \eqref{strict_ineq} for all $k\in\N$.
We deduce from the above arguments that for $k$ such that $k/n< T_n(1)$, the evolution of  
$\lambda^n(t)$ for $t\in [k/n;(k+1)/n\wedge T_n(1))$ is  
\begin{align*}
d\lambda_i^n(t) = - \gamma \lambda_i^n(t)\,dt + \sqrt{2} (\epsilon_t^n dW_t^i+(1-\epsilon_t^n) dB_t^i) + \beta\sum_{j\neq i} 
\frac{\epsilon_t^n}{\lambda_i^n(t)-\lambda_j^n(t)} \,dt\,.
\end{align*} 
with initial conditions in $t=k/n$ given by $(\lambda_1^n(k/n), \dots, \lambda_d^n(k/n))$.
Let us define the process $b^i$ for $t\geq 0$ by $b_t^i := \int_0^t (\epsilon_s^n dW_s^i+(1-\epsilon_s^n) dB_s^i)$. 
Using the fact that the Brownian motions $(W_t^i)_{t\geq 0}, i \in\{1,\dots, d\} $ are mutually independent and independent of the Brownian 
motions $(B_t^i)_{t\geq 0}, i \in\{1,\dots, d\} $ (also mutually independent), 
it is straightforward to check that the processes $(b_t^i)_{t\geq 0}, i \in\{1,\dots, d\} $ are mutually independent Brownian motions. 
It is also easy to see that, for all $s,t \in [k/n;(k+1)/n]$, the random variables $\epsilon_k^n (W_t^i-W_s^i) + (1-\epsilon_k^n) (B_t^i - B_s^i)$ and 
$\epsilon_k^n$ are independent. Therefore, we deduce that the brownian motions $(b_t^i)_{t\geq 0}, i \in\{1,\dots, d\} $ are independent of the sequence 
$(\epsilon_k^n)_{k\in \N}$.
\qed

The following regularity properties will be useful later on.  
\begin{lemma}\label{regularityM_n}
Let $T<\infty$.
Then there exist constants $C,A_0,c,c',\alpha>0$ which depend only on $T,d$ such that for all $n\in \N$, all 
$A\geq A_0$ and all $\varepsilon >0 $
\begin{align}
\P\left[\max_{1\leq i,j \leq d} \sup_{0\leq t \leq T} |M_n^\beta(t)_{ij} | > A \right] & \leq C \exp(-\alpha A^2) \,,\\
\P\left[\max_{1\leq i,j \leq d} \sup_{0\leq s,t \leq T,\atop |t-s|\leq \delta } |M_n^\beta(t)_{ij} - M_n^\beta(s)_{ij} | > \varepsilon \right] & \leq \frac{c}{\delta} 
\exp(-\frac{\varepsilon^2}{c'\delta})\,.
\end{align} 
\end{lemma}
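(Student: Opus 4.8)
The plan is to reduce the statement to one-dimensional Ornstein--Uhlenbeck estimates for the individual entries, the key point being that the noise driving $M_n^\beta$ in Definition \ref{defM} has a quadratic variation growing at most linearly in time \emph{uniformly in $n$}. Fix $1\leq j\leq k\leq d$ and, when $\beta=2$, treat the real and imaginary parts of the $(j,k)$ entry separately. From Definition \ref{defM} one reads off
$$ M_n^\beta(t)_{jk} = e^{-\gamma t}(M_0^\beta)_{jk} + \int_0^t e^{-\gamma(t-s)}\,dN^{jk}_s\,,\qquad N^{jk}_t:=\int_0^t\big(\epsilon^n_s\,dH^\beta_s+(1-\epsilon^n_s)\,dY_s\big)_{jk}\,, $$
where $N^{jk}$ is a martingale (a genuine one, since its bracket will be bounded). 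The first, and only model-specific, step is to bound the bracket of $N^{jk}$ (of each of its real and imaginary parts) by $\kappa\,dt$ for a universal constant $\kappa$. Since $\epsilon^n_s$ is $\{0,1\}$-valued we have $(\epsilon^n_s)^2=\epsilon^n_s$, hence no cross term: the $H^\beta$-contribution has bracket $\leq 2\,dt$ by \eqref{sym_brownian}, and the $Y$-contribution to the $(j,k)$ entry has bracket $2\sum_i|(\chi^n_i)_{jk}|^2\,dt$ (with $\chi^n_i$ evaluated at the grid point $[ns]/n$). Writing $\chi^n_i=v_iv_i^*$ with $\{v_i\}$ an orthonormal basis — legitimate by Proposition \ref{repr_eigenvalues}, the eigenvalues being a.s.\ distinct at the grid times — gives
$$ \sum_i|(\chi^n_i)_{jk}|^2=\sum_i|(v_i)_j|^2|(v_i)_k|^2\leq\sum_i|(v_i)_j|^2=1\,, $$
a bound independent of $n$, so $\langle N^{jk}\rangle_t\leq\kappa t$ for every $n$.

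Granting this, the first inequality follows from a standard Ornstein--Uhlenbeck tail bound. Since $e^{-\gamma(t-s)}\leq 1$ one has $\sup_{t\leq T}|M_n^\beta(t)_{jk}|\leq|(M_0^\beta)_{jk}|+\sup_{t\leq T}|Z^{jk}_t|$, where $Z^{jk}_t:=e^{-\gamma t}\int_0^t e^{\gamma s}\,dN^{jk}_s$ is a martingale with bracket bounded by $\kappa e^{2\gamma T}/(2\gamma)$ on $[0,T]$. Applying the exponential martingale inequality \cite[Corollary H.13]{AGZ} to the real and imaginary parts of $Z^{jk}$, choosing $A_0:=2\max_{jk}|(M_0^\beta)_{jk}|$ so that $A-|(M_0^\beta)_{jk}|\geq A/2$ for $A\geq A_0$, and taking a union bound over the at most $2d^2$ entries, yields $C\exp(-\alpha A^2)$. (As in Lemma \ref{lemCepa}, the constants here in fact also depend on the fixed data $\gamma$ and $M_0^\beta$.)

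For the second inequality I would use, for $s\leq t\leq T$ with $t-s\leq\delta$,
$$ M_n^\beta(t)_{jk}-M_n^\beta(s)_{jk}=-\gamma\int_s^t M_n^\beta(u)_{jk}\,du+\big(N^{jk}_t-N^{jk}_s\big)\,, $$
bounding the drift by $\gamma\delta\sup_{u\leq T}|M_n^\beta(u)_{jk}|$, which has Gaussian tails by the first part. On the event $\{\sup_{u\leq T}|M_n^\beta(u)_{jk}|\leq\varepsilon/(2\gamma\delta)\}$ — whose complement has probability $\leq C\exp(-\alpha\varepsilon^2/(4\gamma^2\delta^2))\leq C\exp(-c''\varepsilon^2/\delta)$ for $\delta\leq 1$ with $\varepsilon/(2\gamma\delta)\geq A_0$ — the increment of $M_n^\beta(\cdot)_{jk}$ can exceed $\varepsilon$ only if $|N^{jk}_t-N^{jk}_s|>\varepsilon/2$. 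The latter is handled by a grid argument: along a mesh-$\delta$ grid of $[0,T]$ one has $\sup_{|t-s|\leq\delta}|N^{jk}_t-N^{jk}_s|\leq 3\max_\ell\sup_{t_\ell\leq t\leq t_{\ell+1}}|N^{jk}_t-N^{jk}_{t_\ell}|$, and on each of the $\lceil T/\delta\rceil$ intervals the bracket increases by at most $\kappa\delta$, so \cite[Corollary H.13]{AGZ} gives $2\exp(-\varepsilon^2/(c'''\delta))$ per interval; summing and taking a union bound over entries produces $\frac{c}{\delta}\exp(-\varepsilon^2/(c'\delta))$. The complementary (large-$\delta$) range, where $\varepsilon/(2\gamma\delta)<A_0$, is checked directly and is essentially vacuous after enlarging $c$.

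The only genuine obstacle is the uniform-in-$n$ projector bound $\sum_i|(\chi^n_i)_{jk}|^2\leq 1$: it is exactly what guarantees that refining the time grid of the coin-tossing construction does not amplify the noise entering each matrix entry, hence that all constants can be taken independent of $n$. Everything else consists of the same one-dimensional martingale estimates already used in the proofs of Lemmas \ref{lemCepa} and \ref{lambda_holder}, together with the minor bookkeeping of keeping $\delta$ small (relative to $1$ and to $\varepsilon$) so that the drift term is negligible against the martingale increment.
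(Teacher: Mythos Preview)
Your proof is correct and follows essentially the same strategy as the paper: the variation-of-constants formula for the OU drift, a uniform-in-$n$ bound on the quadratic variation of the driving noise via the projector estimate (your bound $\sum_i|(\chi^n_i)_{jk}|^2\le 1$ is in fact sharper than the paper's cruder $|\chi^n_i([ns]/n)_{jk}|\le 1$), the exponential martingale inequality \cite[Corollary H.13]{AGZ}, and a mesh-$\delta$ grid for the increment bound. One small slip: $Z^{jk}_t=e^{-\gamma t}\int_0^t e^{\gamma s}\,dN^{jk}_s$ is \emph{not} a martingale because of the outer factor $e^{-\gamma t}$, but this is harmless since $|Z^{jk}_t|\le\big|\int_0^t e^{\gamma s}\,dN^{jk}_s\big|$ and the latter \emph{is} the martingale whose bracket you bound. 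The only real difference from the paper is organizational: the paper works directly with $\Delta_n(s,t)=e^{\gamma t}M_n^\beta(t)-e^{\gamma s}M_n^\beta(s)$, which is itself a martingale in $t$, so no separate handling of the drift is needed for the second inequality; your decomposition into drift plus $N^{jk}_t-N^{jk}_s$ and conditioning on $\{\sup|M_n^\beta|\le\varepsilon/(2\gamma\delta)\}$ gives the same bound with one extra (routine) step.
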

{\bf Proof.}
Using It\^o's formula, we can check that  
\begin{equation*}
e^{\gamma t} M_n^\beta(t) - e^{\gamma s} M_n^\beta(s) = \int_s^t e^{\gamma s} \left(\epsilon_s^n dH_s^\beta + (1-\epsilon_s^n) \sqrt{2}\sum_{i=1}^d 
\chi_i^n(\frac{[ns]}{n}) dB^i_s\right)\,. 
\end{equation*}
Let us set $\Delta_n(s,t):= e^{\gamma t} M_n^\beta(t) - e^{\gamma s} M_n^\beta(s)$.
The entries of $\Delta_n(s,.)$ are martingales with respect to the filtration of the Brownian motions conditionally to the Bernoulli random variables $(\epsilon_k^n)_{k \in \N}$
(this is due to the independence between the Brownian motions $(B^i_t)_{t\geq 0},(H_t^\beta(ij))_{t\geq 0}, 1\leq i , j \leq d$ and the sequence of Bernoulli random variables 
$(\epsilon_k^n)_{k \in \N}$.
Using the fact that $|\chi_i^n([ns]/n)_{ij}|\leq 1$ for all $i,j$, we can check that there exists a constant $C(d,T)$ which does not depend on $n$ such that for all $n\in \N$
\begin{equation*}
|\langle \Delta_n(s,\cdot)_{ij}, \Delta_n(s,\cdot)_{kl} \rangle_t | \leq C(T,d) |t-s|\,.
\end{equation*}
Let $A >0$, using \cite[corollary H.13]{AGZ}, we have
\begin{align}
&\P\left[\max_{1\leq i,j\leq d}
\sup_{0\leq t \leq T} | (e^{\gamma t} M_n^\beta(t) )_{ij} | > A \right]\nonumber \\
&\leq d^2 \max_{1\le i,j\le d} \P\left[
\sup_{0\leq t \leq T} | (e^{\gamma t} M_n^\beta(t) - M_0^\beta )_{ij}| > A -\max_{i,j} | M_0^\beta(i,j) | \right]\nonumber \\
&= d^2\max_{1\leq i,j\leq d}
\P\left[\sup_{0\leq t \leq T} | \Delta_n(0,t)_{ij}| > A -\max_{i,j} | M_0^\beta(i,j) | \right]\nonumber\\
& \leq d^2\exp\left(-\frac{(A-\max_{i,j}|M_0^\beta(i,j)|)^2}{C(d,T) T}\right).\label{cf}
\end{align}
Similarly,
for any given $s\in [0,T]$,
for  $\varepsilon>0$, using \cite[Corollary H.13]{AGZ}, we have, for each entry $ij$ and for every $\delta >0$:
\begin{align*}
&\P\left[\max_{1\leq i,j\leq d}\sup_{ t\in [s-\delta,s+\delta]} | (e^{\gamma t} M_n^\beta(t) - e^{\gamma s} M_n^\beta(s) )_{ij} | > \varepsilon \right] 
 \leq d^2 \exp\left(-\frac{\varepsilon^2}{2 C \delta}\right).
\end{align*}
and therefore there exists a positive constant $c'$ so that 
\begin{align*}
&\P\left[\max_{1\leq i,j\leq d}
\sup_{0\leq s,t \leq T, \atop |t-s| \leq \delta} |(e^{\gamma t} M_n^\beta(t) - e^{\gamma s} M_n^\beta(s) )_{ij} | > \varepsilon \right] \\
&\leq \sum_{i=1}^{[2T/\delta]+1}
\P\left[\max_{1\le i,j\le d}
\sup_{ | t-\frac{i\delta}{2}| \leq \delta/2} | (e^{\gamma t} M_n^\beta(t) - e^{\gamma i\delta/2} M_n^\beta(i\delta/2) )_{ij} | > \varepsilon/2 \right]\\
& \leq d^2 \frac{2T}{\delta}
\exp\left(-\frac{\varepsilon^2}{c'\delta}\right).\\
\end{align*}
\qed
\begin{lemma}\label{regularity_lambda_n}
Let $T<\infty$.
Then there exist constants $C',A_0,c',c'',\alpha,\epsilon _0>0$ which depend only on $T,d$ such that for all $n\in \N$, all 
$A\geq A_0$ and all $\varepsilon >0 $
\begin{align}
\P\left[\max_{1\leq i \leq d} \sup_{0\leq t \leq T} |\lambda_i^n(t) |  > A \right] & \leq C' \exp(-\alpha A^2) \label{reg_lamb_eq1}\,,  \\
\P\left[\max_{1\leq i \leq d} \sup_{0\leq s, t \leq T, \atop  |t-s|\leq \delta } |\lambda_i^n(t) - \lambda_i^n(s)| > \varepsilon \right] & \leq\frac{c''}{\delta} 
\exp(-\frac{\varepsilon^2}{c'\delta})\,. 
\end{align} 
\end{lemma}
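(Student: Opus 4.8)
The plan is to deduce both estimates directly from Lemma \ref{regularityM_n}, using only the classical fact that the \emph{ordered} eigenvalues of a matrix in $\Ha_d^\beta$ depend in a $1$-Lipschitz way on the matrix for the operator norm. Precisely, by Weyl's inequality (equivalently, the Hoffman--Wielandt inequality), for any $A,B\in\Ha_d^\beta$ with ordered eigenvalues $\lambda_1(A)\le\dots\le\lambda_d(A)$ and $\lambda_1(B)\le\dots\le\lambda_d(B)$ one has
\begin{equation*}
\max_{1\le i\le d}|\lambda_i(A)-\lambda_i(B)|\le\|A-B\|_{\mathrm{op}}\le\|A-B\|_{HS}\le d\max_{1\le i,j\le d}|A_{ij}-B_{ij}|\,,
\end{equation*}
and, taking $B=0$, $\max_{1\le i\le d}|\lambda_i(A)|=\|A\|_{\mathrm{op}}\le d\max_{1\le i,j\le d}|A_{ij}|$. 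So everything reduces to controlling the matrix entries, which is exactly Lemma \ref{regularityM_n}.

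\textbf{Step 1: the boundedness bound \eqref{reg_lamb_eq1}.} With $A=M_n^\beta(t)$ the second inequality above gives $\max_i|\lambda_i^n(t)|\le d\max_{ij}|M_n^\beta(t)_{ij}|$ for every $t\ge0$, hence
\begin{equation*}
\P\Big[\max_{1\le i\le d}\sup_{0\le t\le T}|\lambda_i^n(t)|>A\Big]\le\P\Big[\max_{1\le i,j\le d}\sup_{0\le t\le T}|M_n^\beta(t)_{ij}|>A/d\Big]\le C\exp(-\alpha A^2/d^2)
\end{equation*}
for all $A\ge dA_0$, by the first estimate of Lemma \ref{regularityM_n}. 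This is \eqref{reg_lamb_eq1} after renaming the constants ($C':=C$, exponential rate $\alpha/d^2$, threshold $dA_0$).

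\textbf{Step 2: the modulus of continuity.} With $A=M_n^\beta(t)$ and $B=M_n^\beta(s)$ the first inequality above gives $\max_i|\lambda_i^n(t)-\lambda_i^n(s)|\le d\max_{ij}|M_n^\beta(t)_{ij}-M_n^\beta(s)_{ij}|$, so that
\begin{equation*}
\P\Big[\max_{1\le i\le d}\sup_{\substack{0\le s,t\le T\\|t-s|\le\delta}}|\lambda_i^n(t)-\lambda_i^n(s)|>\varepsilon\Big]\le\P\Big[\max_{1\le i,j\le d}\sup_{\substack{0\le s,t\le T\\|t-s|\le\delta}}|M_n^\beta(t)_{ij}-M_n^\beta(s)_{ij}|>\varepsilon/d\Big]\,,
\end{equation*}
and the right-hand side is at most $\frac{c}{\delta}\exp(-\varepsilon^2/(c'd^2\delta))$ by the second estimate of Lemma \ref{regularityM_n}. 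Renaming $c'd^2$ as the new $c'$ yields the second assertion.

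\textbf{Main obstacle.} There is no serious difficulty: the lemma is an essentially immediate corollary of Lemma \ref{regularityM_n}. The only points that deserve a word of care are that one must invoke Weyl's (or Hoffman--Wielandt's) inequality, which compares the two eigenvalue families \emph{in the same order}, rather than any bound on individual unordered eigenvalues, so that the estimate genuinely controls the ordered family $\lambda_1^n\le\dots\le\lambda_d^n$; and that one must carry the dimensional factor $d$ through when passing from the operator norm to the entrywise supremum norm, enlarging $A_0$ correspondingly.
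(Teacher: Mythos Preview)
Your proof is correct and follows essentially the same route as the paper: both arguments reduce the eigenvalue estimates to the matrix-entry estimates of Lemma \ref{regularityM_n} via the Lipschitz dependence of ordered eigenvalues on the matrix (the paper invokes the Hoffman--Wielandt inequality, you cite Weyl's/Hoffman--Wielandt; the resulting bound $\max_k|\lambda_k^n(t)-\lambda_k^n(s)|\le d\max_{i,j}|M_n^\beta(t)_{ij}-M_n^\beta(s)_{ij}|$ is the same).
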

{\bf Proof.}
This lemma is a consequence of Lemma \ref{regularityM_n} and the inequalities
\begin{align}
\max_{1\le k\le d}
|\lambda_k^n(t) - \lambda_k^n(s) | &\leq \left(\sum_{i=1}^d | \lambda_i^n(t) - \lambda_i^n(s) |^2
\right)^{\frac{1}{2}} \nonumber\\
& = 
 \left( \sum_{i,j=1}^d | 
M_n^\beta(t)_{ij} - M_n^\beta(s)_{ij} |^2 \right)^{1/2} \label{HW}\\
&\leq d\max_{1\le i,j \le d}  | 
M_n^\beta(t)_{ij} - M_n^\beta(s)_{ij} | \nonumber
\end{align}
where, for the second inequality, we used \cite[ lemma 2.1.19]{AGZ}
and the fact that the $\lambda^n_i$ are ordered.
\qed

\section{Convergence of the law of the eigenvalues till the first hitting time}\label{conv_until_first_coll}
\begin{proposition}\label{firsttheo} Take $\lambda(0)=(\lambda_1<\lambda_2<\cdots<\lambda_d)$. 
Construct $\mu^n$,   strong
solution of \eqref{defmu}, with the same Brownian motion than $\lambda$, strong
solution of \eqref{theeqlim}, both starting from
$\lambda(0)$. $\lambda^n$ equals $\mu^n$ till $T_n(1)$. For all $T>0$,
we have the following almost sure  convergence 
\begin{equation*}
\lim_{n\ra\infty}\max_{1\leq i\leq d}
\sup_{t\leq T\wedge T_n(1)\wedge \tau^3_\varepsilon}|\lambda_i^n(t)-\lambda_i(t)|
=0\,.
\end{equation*}
As a consequence, 
if we let  $T_1= \inf\{ t > 0, \exists i\neq j, \, \lambda_i(t) = \lambda_j(t)  \}$, we have almost surely 
\begin{equation*}
T_1 \leq \liminf T_n(1)\,.
\end{equation*}
\end{proposition}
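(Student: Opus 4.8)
The argument is a coupling estimate in the spirit of C\'epa--L\'epingle, with a new ingredient to handle the random temperature. On $[0,T_n(1))$ the process $\lambda^n=\mu^n$ solves \eqref{defmu} driven by the \emph{same} Brownian motions $b^i$ as $\lambda$, so $e_i:=\lambda_i^n-\lambda_i$ has no martingale part and $D(t):=\sum_{i=1}^d e_i(t)^2$ is of bounded variation with $D(0)=0$. Applying It\^o's formula and writing $\frac{\epsilon_s^n}{\lambda_i^n-\lambda_j^n}-\frac{p}{\lambda_i-\lambda_j}=\epsilon_s^n\bigl(\frac1{\lambda_i^n-\lambda_j^n}-\frac1{\lambda_i-\lambda_j}\bigr)+(\epsilon_s^n-p)\frac1{\lambda_i-\lambda_j}$, the symmetrisation used in \eqref{astuce} turns the first piece into $-\beta\epsilon_s^n\sum_{i\ne j}\frac{(e_i-e_j)^2}{(\lambda_i^n-\lambda_j^n)(\lambda_i-\lambda_j)}$, which is nonpositive since $\lambda^n$ is strictly ordered on $[0,T_n(1))$ and $\lambda$ is ordered. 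Discarding it and $-2\gamma D\,dt$, one gets, on $[0,\sigma_n]$ with $\sigma_n:=T\wedge T_n(1)\wedge\tau^3_\varepsilon$,
\be
D(t)\;\le\; 2\beta\,\bigl|R_n(t)\bigr|\,,\qquad R_n(t):=\int_0^t(\epsilon_s^n-p)\,\langle e(s),g(s)\rangle\,ds,\quad g_i(s):=\sum_{j\ne i}\frac{1}{\lambda_i(s)-\lambda_j(s)}\,,
\ee
so everything reduces to $\sup_{t\le\sigma_n}|R_n(t)|\to0$ almost surely.

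A crude bound $|R_n(t)|\le\bigl(\sup_{u\le t}\sqrt{D(u)}\bigr)\sum_i\int_0^T|g_i|$ together with Lemma \ref{lemCepa} (finiteness and Gaussian tails of $\int_0^T|g_i|$) already gives $\sup_{[0,\sigma_n]}D\le M$ for some a.s.\ finite $M$ not depending on $n$; but one cannot replace $|\epsilon_s^n-p|$ by $1$, since $\int_0^t|\epsilon_s^n-p|\,|g(s)|\,ds$ stays bounded below --- the decay must come from cancellation created by the fast oscillation of $\epsilon^n$. To exploit it I would decompose $[0,t]$ along the mesh $\{k/n\}$ and freeze the slowly varying factor $e$: writing $\epsilon_s^n=\epsilon_k^n$ on $[k/n,(k+1)/n)$,
\be
R_n(t)=\sum_{k<[nt]}(\epsilon_k^n-p)\,\langle e(k/n),G_k\rangle+(\text{boundary}+\text{freezing errors}),\qquad G_k:=\int_{k/n}^{(k+1)/n}g(s)\,ds.
\ee
Since $\epsilon_k^n$ is independent of $\epsilon_0^n,\dots,\epsilon_{k-1}^n$ and of the whole Brownian filtration (Proposition \ref{repr_eigenvalues}), the main sum is a discrete martingale whose predictable bracket is $p(1-p)\sum_k\langle e(k/n),G_k\rangle^2\le M\sum_i\bigl(\max_k\int_{k/n}^{(k+1)/n}|g_i|\bigr)\int_0^T|g_i|$. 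The maximal block integral $\max_k\int_{k/n}^{(k+1)/n}|g_i|$ tends to $0$ almost surely by absolute continuity of the Lebesgue integral ($g_i\in L^1([0,T])$ by Lemma \ref{lemCepa}), so the bracket $\to0$ a.s.\ and is dominated by an integrable variable; Doob's inequality (with a higher-moment refinement, which is where $p\beta<1$ is used to secure Borel--Cantelli summability) then gives $\sup_m\bigl|\sum_{k<m}(\epsilon_k^n-p)\langle e(k/n),G_k\rangle\bigr|\to0$ a.s. The boundary term is controlled by $\max_k\int_{k/n}^{(k+1)/n}|g_i|\to0$, and the freezing error by $\sum_i\int_0^T|g_i|$ times the mesh modulus of continuity of $e$, which vanishes by Lemmas \ref{lambda_holder} (for $\lambda$, using $\sigma_n\le\tau^3_\varepsilon$) and \ref{regularity_lambda_n} (for $\lambda^n$, uniformly in $n$).

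For the consequence it suffices, since $\{\liminf_n T_n(1)<T_1\}\subseteq\bigcup_{q\in\Q_+}\{\liminf_n T_n(1)<q\le T_1\}$, to show each set on the right is null. Fix $q$ and $\eta>0$, and choose $\varepsilon$ with $\P(\tau^3_\varepsilon\le q)\le\eta$ (Lemma \ref{multiple_collisions}). On $\{\tau^3_\varepsilon>q\}\cap\{\liminf_n T_n(1)<q\le T_1\}$, one has $q\wedge T_n(1)\wedge\tau^3_\varepsilon=q\wedge T_n(1)$, and along a subsequence with $T_{n_k}(1)\to\ell<q$ the first part gives $\sup_{t\le T_{n_k}(1)}\max_i|\lambda^{n_k}_i(t)-\lambda_i(t)|\to0$. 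Since the ordered eigenvalues of $\lambda^{n_k}$ are continuous, two consecutive ones coincide at $t=T_{n_k}(1)$; passing (along a further subsequence) to the limit and using continuity of $\lambda$ forces two of the $\lambda_i$ to coincide at $\ell$, i.e.\ $T_1\le\ell<q\le T_1$, a contradiction. Hence that set has probability $\le\eta$; letting $\eta\downarrow0$ completes the proof.

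I expect the main obstacle to be the cancellation estimate for $R_n$: reconciling the high-frequency $\{0,1\}$-valued noise $\epsilon^n$ with an integrand $\langle e,g\rangle$ that is slowly varying but only $L^1$ (not $L^2$) in time near the collisions of $\lambda$. The decisive point is that $p\beta<1$ makes the collision singularity subcritical --- the limiting gap behaves like a Bessel process of dimension $1+p\beta<2$ --- so that the relevant block quantities carry $L^p$-moments for some $p>1$, which is precisely what is needed to upgrade the $L^2$ control of the martingale into almost-sure uniform convergence on $[0,\sigma_n]$.
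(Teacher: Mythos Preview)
Your coupling setup, the It\^o computation, the symmetrisation \eqref{astuce}, and the reduction to $\sup_{t\le\sigma_n}|R_n(t)|\to 0$ a.s.\ are exactly the paper's. The mesh decomposition and the recognition that the main piece is a discrete martingale (conditionally on the Brownian paths, $G_k$ is deterministic and $e(k/n)$ is $\sigma(\epsilon_0^n,\dots,\epsilon_{k-1}^n)$-measurable) also match.

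The gap is in upgrading ``bracket $\to 0$ a.s.'' to ``$\sup$ of the martingale $\to 0$ a.s.''. Absolute continuity of $s\mapsto\int_0^s|g_i|$ gives $\max_k |G_{k,i}|\to 0$ a.s.\ with \emph{no rate}, so Doob/BDG delivers only $L^2$-convergence of the supremum, not almost-sure convergence along~$n$. Your proposed repair --- a ``higher-moment refinement, which is where $p\beta<1$ is used'' --- is a red herring: the proposition holds for every $p\beta\in[0,2]$, the paper's proof never invokes $p\beta<1$, and when $p\beta\ge 1$ there are no collisions at all so nothing is harder. The tool you are missing is the one you already cite for the freezing error: the \emph{second} conclusion of Lemma~\ref{lambda_holder} gives, on $\{t\le\tau^3_\varepsilon\}$, a quantitative bound of the form $\P\bigl[\max_k \int_{k/n}^{(k+1)/n}du/|\lambda_i-\lambda_j|\ge n^{-1/8}\bigr]\le C n\,e^{-c n^{1/2}}$, which is summable in~$n$. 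The paper combines this with the boundedness of $\lambda^n$ (Lemma~\ref{regularity_lambda_n}) to get $|A^n_k|\le n^{-1/8}$ on a set of probability $\ge 1-e^{-cn^{1/16}}$, and then runs an exponential-martingale (Azuma-type) estimate to obtain $\P\bigl(|\tilde P_n(k/n\wedge\tau^3_\varepsilon)|\ge n^{-1/32}\bigr)\le Ce^{-n^{1/32}}$; Borel--Cantelli finishes. You could equally well plug the $n^{-1/8}$ rate into your bracket bound and use $L^2$-Doob plus Borel--Cantelli. Either way the missing ingredient is the quantitative modulus of continuity of $\int 1/|\lambda_i-\lambda_j|$ from Lemma~\ref{lambda_holder}, not any subcriticality hypothesis on $p\beta$. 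Your argument for $T_1\le\liminf T_n(1)$ is fine.
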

We point out that this convergence does not happen on a trivial interval since we have
\begin{rem}\label{int_non_tri}
For any $\eta>0$, there exists  $\tau(\eta)>0$ so that 
\begin{equation*}
\lim_{n\ra\infty} \P\left[  T_n(1)\geq \tau(\eta)\right] \geq 1-\eta\,.
\end{equation*}
\end{rem}

{\it Proof of Remark \ref{int_non_tri}.}
By the same arguments developed in \eqref{HW}, we find that
\begin{eqnarray*}
\P\left[\sup_{t\leq T}\max_{1\leq i\leq d}|\lambda_i^n(t)e^{\gamma t}
-\lambda_i(0)|\geq \e\right]&\leq&
\P\left[\sup_{t\leq T}|\tr((M^n(t)e^{\gamma t}
-M_0)^2)|\geq \e^2\right] \\
&\leq &d^2 \exp(-\frac{\e^2}{2C(d,T)T})\,.
\end{eqnarray*}
But since also the $\lambda_i^n$ are uniformly bounded with high probability,
we can choose for any $\eta>0$ the parameter $T$ small enough so that
\begin{equation*}
\P\left[\max_{1\leq i\leq d}\sup_{t\leq T}|\lambda_i^n(t)-\lambda_i(0)|\ge \min_{1\leq i\leq d}|\lambda_i-\lambda_{i+1}|/3\right]\leq \eta
\end{equation*}
This implies that $P(T_n(1)\le T)\le \eta$.
\qed

{\bf Proof of Proposition \ref{firsttheo}}
Using It\^o's formula, we can compute 
\begin{align}\label{eds_sum_square}
\sum_{i=1}^{d} \left(\lambda_i^n(t)-\lambda_i(t)\right)^2 &= -2\gamma \int_0^t \sum_{i=1}^{d} \left(\lambda_i^n(s)-\lambda_i(s)\right)^2 ds \\ \notag
&+ 2 \beta \int_0^t \e_s^n \sum_{i=1}^{d}\sum_{j\not = i} \left(\lambda_i^n(s)-\lambda_i(s)\right) 
\left(\frac{1}{\lambda_i^n(s)-\lambda_j^n(s)}-\frac{1}{\lambda_i(s)-\lambda_j(s)}\right) ds \\ \notag
&+2\beta \int_0^t (\e_s^n-p) \sum_{i=1}^{d} \sum_{j \not = i} \frac{\lambda_i^n(s)-\lambda_i(s)}{\lambda_i(s)-\lambda_j(s)} ds \,.
\end{align}
By the same argument as in \eqref{astuce}
the second term in the right hand side is non positive.
Thus using equations \ref{eds_sum_square}, we find for $t \leq T_n(1)$ 
\begin{align*}
\sum_{i=1}^{d} \left(\lambda_i^n(t)-\lambda_i(t)\right)^2 \leq 2\beta \int_0^t (\e_s^n-p) \sum_{i=1}^{d} \sum_{j \not = i} \frac{\lambda_i^n(s)-\lambda_i(s)}{\lambda_i(s)-\lambda_j(s)} ds
:= R_n(t)\,.
\end{align*}
We next prove that
 \begin{equation}\label{R_n}
\lim_{n\ra\infty}
\sup_{0\leq t\leq T\wedge\tau^3_\varepsilon} R_n(t)=0 \quad a.s.\end{equation}
Write $R_n(t)$ as $R_n(t) = P_n(t) + Q_n(t)$ where
\begin{align*}
P_n(t)&:= \int_0^t (\e_s^n-p) \sum_{i=1}^{d} \sum_{j\not=i} \frac{\lambda_i^n([ns]/n)-\lambda_i(s)}{\lambda_i(s)-\lambda_j(s)}ds\,, \\
Q_n(t)&:= \int_0^t (\e_s^n-p) \sum_{i=1}^{d} \sum_{j\not=i} \frac{\lambda_i^n(s)-\lambda_i^n([ns]/n)}{\lambda_i(s)-\lambda_j(s)}ds\,.
\end{align*}
We first handle the convergence of $Q_n(t)$.
Set  $\Omega_1=\{\sup_{|s-t|\le 1/n\atop t\le T}\max_{1\le i\le d}|\lambda^n_i(t)-\lambda^n_i(s)|\leq n^{-1/2+\e}\}$.
On the event $\Omega_1$, we have
\begin{align*}
|Q_n(t)|\le n^{-1/2+\e} \sum_{i=1}^{d} \sum_{j\not=i} \int_0^t \frac{ds}{\mid\lambda_i(s)-\lambda_j(s)\mid} \,.
\end{align*}
Following \eqref{HW}, we know that
$$P(\Omega_1^c)\le c e^{-cn^{2\e}} \,.$$
We thus deduce from  Lemma \ref{lemCepa} that
\begin{align*}
\P\left[\sup_{t\leq T} |Q_n(t)| > \delta \right] &\leq \P\left[ \sum_{i=1}^{d} \sum_{j\not=i} \int_0^T \frac{ds}{\mid\lambda_i(s)-\lambda_j(s)\mid} >\delta n^{1/2-\e}\right] + 
\P\left[\Omega_1^c\right] \\
&\leq c\, e^{-c \,\delta^2\, n^{1-2\epsilon}} + c\, e^{-c\,n^{2\e}}\,.
\end{align*}
Hence, Borel Cantelli's Lemma insures the almost sure convergence of 
$Q_n$ to zero.
We now turn to the convergence of $P_n(t)$. Let $\eta >0$ small and 
 write
$$P_n(t)=-\frac{d(d-1)}{2} \int_0^t (\e^n_s-p) ds+ \tilde P_n(t)$$
with
$$ \tilde P_n(t)= \int_0^t (\e_s^n-p) \sum_{i=1}^{d} \sum_{j<i}
 \frac{\lambda_i^n([ns]/n)-\lambda_j^n([ns]/n)}{\lambda_i(s)-\lambda_j(s)}ds\,.$$
The process $\int_0^t (\e^n_s-p) ds$ is a martingale and by
Azuma-Hoeffding inequality, for any $\delta>0$
$$\P\left(\max_{t\le T}|\int_0^t (\e^n_s-p) ds|\ge \delta\right)\le 
2\exp(-\frac{\delta^2 n}{2})\,.$$
We now use the independence between the brownian motions $(b^i_t)_{0\leq t \leq T}, i =1,\dots,d$
and the Bernoulli random variables $\e_k^n,k=1,\dots,[nT]$. 
Conditionally on the $(b^i_t)_{0\leq t \leq T}, i =1,\dots,d$, 
the processes $\lambda_i(t), i=1,\dots,d$ are deterministic and 
the process $\tilde P_n$ is a martingale with respect to the filtration of the $\e_k^n$.  We let 
$$A^n_k =\sum_{i=1}^{d} \sum_{j<i}\int_{k/n}^{k+1/n}
 \frac{\lambda_i^n([ns]/n)-\lambda_j^n([ns]/n)}{\lambda_i(s)-\lambda_j(s)}ds.$$
By Lemma \ref{lambda_holder} and Lemma \ref{regularity_lambda_n}, the set
$$\Omega=\{\sup_{k\le n T\wedge \tau^3_\e} | A^n_k|
\le n^{-1/8}\} $$
has probability larger than  $1-e^{-c n^{1/16}}$. Moreover, by martingale property   it is easy to see that for all $\lambda\ge 0$,
$$\bE[ 1_{\Omega}e^{\lambda \tilde P_n(k/n) -\frac{1}{2}\lambda^2 \sum_{\ell=0}^{k-1}
(A^n_{k/n})^2}]\le 1\,.$$ Taking $\lambda=n^{1/16}$,
since on $\Omega$, $- n^{1/16} |A^n_k| +n^{1/8} |A^n_k|^2/2\le 0$,
Tchebychev's inequality yields
$$\bP\left( \{ |\tilde P_n(k/n\wedge \tau^3_\varepsilon)|
\ge n^{-1/16}(\sum_{\ell=0}^{[Tn]} |A^n_k| +t)
\}\cap\Omega\right)\le e^{-t}$$
As by Lemma \ref{lemCepa}, $\sum_{\ell=0}^{[Tn]} |A^n_k|$
is bounded by  $n^{1/32}$ with probability greater than $1-e^{-n^{1/16}}$
we conclude that
$$\bP\left( |\tilde P_n(k/n \wedge \tau^3_\varepsilon)|\ge n^{-1/32}\right)
\le C e^{-n^{1/32}}\,.$$
The uniform estimate is obtained easily by  controlling  the increments of $\tilde P_n$ in between the times
$k/n, k\le [nT]$ by   $\sup_{k\le [nT]}|A^n_k|$ 
which we have already bounded.

\qed
\section{Proof of Theorem \ref{main}.}\label{end_proof_main}
\subsection{Non colliding case $p\beta\geq 1$}\label{pbetagrand}

It is straightforward to deduce 
Theorem \ref{main} when $p\beta\geq 1$.
Indeed if $\beta p\geq 1$ we know that there are no collisions for the limiting process
and more precisely, see e.g \cite[p. 252]{AGZ},
\begin{equation*}
\P(\tau_\varepsilon^2 \leq T)\leq c(\lambda_0) T/|\log \varepsilon|
\end{equation*}
with some finite constant $c(\lambda_0)$ which only depends on the spacings of the eigenvalues
at the initial time.
This implies in particular that
\begin{equation*}
\lim_{\varepsilon\ra 0} \lim_{n\ra\infty} \P(T_\varepsilon^n \leq T)=0
\end{equation*}
from which we easily deduce Theorem \ref{main} from Proposition \ref{firsttheo}.

\subsection{Colliding case $p\beta<1$}\label{auxiliaryprocesssec}

We now define the process $(\lambda_i^{n,\delta}(t))_{t\geq 0}$ which will depend on the sequence $(T_\ell^\delta)_{\ell\in \N}$ defined in Definition \ref{deflimprocdelta}. To unify notations, 
set $T_1^\delta:=T_1$ and $T_n^\delta(1):=T_n(1)$. 
\begin{definition}\label{def_lambda_n,delta}
For $t< T_1^\delta$, set $\lambda_i^{n,\delta}(t):=\lambda_i^n(t)$. 
For time $t>T_1^\delta$, we define the process recursively by setting for each $\ell \geq 1$, 
$\lambda_i^{n,\delta}(T_\ell^\delta)= \lambda_i^{n,\delta}(T_\ell^\delta-)+ i \delta$ for all $i \in\{1,\dots,d\}$ and for $t>T_\ell^\delta$,
the process $\lambda_i^{n,\delta}$ is defined up to time $T_{\ell+1}^\delta$ by ordering the process $(\mu_1^{n,\delta}(t),\dots,\mu_d^{n,\delta}(t))_{T_\ell^\delta\leq 
t \leq T_{\ell+1}^\delta}$ which is defined for $t\geq T_\ell^\delta$ as  
\begin{equation}\label{dynamic_lambda_n,delta}
d\mu_i^{n,\delta}(t) = - \gamma \mu_i^{n,\delta}(t)\,dt + \sqrt{2} db_t^i + \beta\sum_{j\neq i} \frac{\epsilon_t^n}{\mu_i^{n,\delta}(t)-\mu_j^{n,\delta}(t)} \, dt\,.
\end{equation}
with initial conditions in $t= T_\ell^\delta$ given by $(\lambda_1^{n,\delta}(T_\ell^\delta),\dots,\lambda_d^{n,\delta}(T_\ell^\delta))$.
\end{definition}

\begin{lemma}\label{lim_ndelta}
Let $T<\infty$ and $\delta >0$.
We have the following convergence in probability, for all $\ell\in \N$,
\begin{align*}
\lim_{n\to \infty} \max_{1\leq i \leq d} \sup_{0 \leq t
 \leq T_\ell^\delta\wedge  T} | \lambda_i^\delta(t) - \lambda_i^{n,\delta}(t)| = 0 \,.
\end{align*}
In particular, for every $\ell$, if $T_n^\delta$ is the first collision time for $\lambda^{n,\delta}$
after $T^\delta_{\ell -1}$,
$$T_\ell^\delta\wedge T \leq \liminf T_n^\delta(\ell)\wedge T \quad a.s. $$
\end{lemma}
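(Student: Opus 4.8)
The plan is to argue by induction on $\ell$, the case $\ell=1$ being essentially Proposition \ref{firsttheo}. First I would dispose of the regime $p\beta\ge 1$: there $T_\ell^\delta=\infty$ and $\lambda^{n,\delta}=\lambda^n$, $\lambda^\delta=\lambda$, so the statement is Theorem \ref{main} in that regime, which is established in Section \ref{pbetagrand} from Proposition \ref{firsttheo} together with the bound $\lim_\varepsilon\lim_n\P[T_\varepsilon^n\le T]=0$ on the collision time of $\lambda^n$. So assume $p\beta<1$, in which case all the $T_\ell^\delta$ are a.s.\ finite. Fix $T$ and $\eta>0$. Using Lemmas \ref{multiple_collisions}, \ref{2_Couples} and \ref{lemjk} I would first choose $\varepsilon>0$ and then $\delta_0$ so small that, for $\delta\le\delta_0$, the stopping time $\tau_\varepsilon^\delta=\tau_\varepsilon^{2,\delta}\wedge\tau_\varepsilon^{3,\delta}$ of the proof of Proposition \ref{T_Lgrand} satisfies $\P[\tau_\varepsilon^\delta\le T]\le\eta$; on the complementary event $\lambda^\delta$ has only simple and temporally separated collisions, which is precisely what makes the error estimates below run as in Proposition \ref{firsttheo}.

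For $\ell=1$, on $[0,T_1^\delta)=[0,T_1)$ one has $\lambda^{n,\delta}=\lambda^n=\mu^n$ and $\lambda^\delta=\lambda$, so Proposition \ref{firsttheo} provides both the uniform a.s.\ convergence on $[0,T\wedge T_n(1)\wedge\tau_\varepsilon^3]$ and the bound $T_1\le\liminf_n T_n(1)$. I would then remove the cutoff $T_n(1)$ using this liminf bound together with the (uniform in $n$) moduli of continuity of $\lambda^n$ and of $\lambda$ from Lemmas \ref{regularity_lambda_n} and \ref{lambda_holder}; this yields convergence in probability of $\sup_{0\le t\le T_1^\delta\wedge T}\max_i|\lambda_i^{n,\delta}(t)-\lambda_i^\delta(t)|$ to $0$, and since at $T_1^\delta$ both processes are shifted by the same vector $(i\delta)_{1\le i\le d}$ the convergence extends to the closed interval. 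This also gives the last assertion for $\ell=1$.

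For the inductive step, assume the conclusion (and the liminf statement) established up to $T_\ell^\delta\wedge T$, and restart at the stopping time $T_\ell^\delta$: on $[T_\ell^\delta,T_{\ell+1}^\delta)$ the processes $\mu^{n,\delta}$ and $\lambda^\delta$ solve \eqref{dynamic_lambda_n,delta} and \eqref{sde_lamb} driven by the same Brownian motions, from the separated (hence coordinatewise distinct) data $\lambda^{n,\delta}(T_\ell^\delta)$ and $\lambda^\delta(T_\ell^\delta)$, and $\lambda^{n,\delta}=\mu^{n,\delta}$ up to the first collision time $T_n^\delta(\ell+1)$ of $\lambda^{n,\delta}$ after $T_\ell^\delta$. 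Applying It\^o's formula to $Z_n(t):=\sum_i(\lambda_i^{n,\delta}(t)-\lambda_i^\delta(t))^2$ exactly as in \eqref{eds_sum_square}, the interaction term is non-positive by the symmetrisation \eqref{astuce} (both families being ordered up to $T_n^\delta(\ell+1)$, and $\varepsilon_t^n\ge 0$), so that
\[
Z_n(t)\le Z_n(T_\ell^\delta)+R_n(t)\qquad\text{for }t<T_{\ell+1}^\delta\wedge T_n^\delta(\ell+1)\wedge\tau_\varepsilon^{3,\delta},
\]
with $R_n$ the $(\varepsilon_s^n-p)$--term. Here $Z_n(T_\ell^\delta)\to 0$ by the induction hypothesis, while $\sup_{t\le T\wedge T_{\ell+1}^\delta\wedge\tau_\varepsilon^{3,\delta}}|R_n(t)|\to 0$ in probability by the very same decomposition $R_n=P_n+Q_n$ and concentration estimates as in the proof of Proposition \ref{firsttheo}, now invoking Lemma \ref{lemCepa} for $\lambda^\delta$ on $[T_\ell^\delta,T_{\ell+1}^\delta)$---where it is a genuine Dyson process started from distinct points---and Lemmas \ref{lambda_holder}, \ref{regularity_lambda_n} for the regularities of $\lambda^\delta$ and of $\lambda^{n,\delta}$. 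Hence $\sup_{T_\ell^\delta\le t\le T_{\ell+1}^\delta\wedge T_n^\delta(\ell+1)\wedge\tau_\varepsilon^{3,\delta}}Z_n(t)\to 0$ in probability; as in Proposition \ref{firsttheo}, this forces $T_{\ell+1}^\delta\wedge T\le\liminf_n T_n^\delta(\ell+1)\wedge T$ (an earlier collision of $\lambda^{n,\delta}$ would, by the convergence, produce an earlier collision of $\lambda^\delta$), so the stopping time $T_n^\delta(\ell+1)$ may be dropped; letting $\eta\downarrow 0$ disposes of $\tau_\varepsilon^{3,\delta}$, and matching the shifts at $T_{\ell+1}^\delta$ closes the induction.

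I expect the main obstacle to be the transfer across the restart times $T_\ell^\delta$: one must simultaneously check that $\lambda^{n,\delta}$ inherits from $\lambda^\delta$ the near-absence of triple and of simultaneous double collisions (so that the error bounds of Proposition \ref{firsttheo} apply verbatim with $\lambda^\delta$ in place of $\lambda$), that the discrepancy of the initial data at $T_\ell^\delta$---which is $O(1)$ in $\ell$ but $o(1)$ in $n$---does not deteriorate, which is exactly what the contractive (non-positive) interaction term guarantees, and that the approximating collision time $T_n^\delta(\ell+1)$ does not fall, in the $n\to\infty$ limit, strictly before $T_{\ell+1}^\delta$.
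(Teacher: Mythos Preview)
Your proposal is correct and follows essentially the same approach as the paper: induction on $\ell$, with the base case given by Proposition \ref{firsttheo}, and the inductive step obtained by applying It\^o's formula to $\sum_i(\lambda_i^{n,\delta}-\lambda_i^\delta)^2$ on $[T_\ell^\delta,T_{\ell+1}^\delta\wedge T_n^\delta(\ell+1)]$, discarding the interaction term via \eqref{astuce}, and controlling the $(\varepsilon_s^n-p)$--remainder exactly as in the proof of Proposition \ref{firsttheo} up to the cutoff $\tau_\varepsilon^{3}$ for the restarted process. The paper also handles the residual interval $[T_n^\delta(\ell+1),T_{\ell+1}^\delta]$ (when $T_n^\delta(\ell+1)<T_{\ell+1}^\delta$) via the moduli of continuity of $\lambda^\delta$ and $\lambda^{n,\delta}$, as in your treatment of the base case; you invoke this step somewhat implicitly in the inductive part, but the argument is the same.
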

{\bf Proof}
Again, we prove this Lemma by induction over $\ell$.

$\bullet$ We begin with the case $\ell=1$. Proposition \ref{firsttheo} yields that 
the random variable $ \max_{1\leq i \leq d} \sup_{0\leq t \leq T_n(1)\wedge T} |\lambda_i(t) - \lambda_i^n(t) | = 0$ converges to $0$ 
in probability as by Lemma \ref{multiple_collisions},  $P(\tau_\varepsilon^3\ge T)$ goes to one as $\varepsilon$ vanishes.  
Since we have the almost sure inequality $T_1^\delta \leq\liminf T_n^\delta(1)$, the continuity of the $\lambda_i,1\leq i\leq d$, the regularity 
property of the $\lambda_i^n$ given by Lemma \ref{regularity_lambda_n}, Lemma \ref{lambda_holder} and Proposition \ref{firsttheo}, we can check that since before $T_1^\delta$
$\lambda_i^\delta=\lambda_i$ and $\lambda^{n,\delta}_i=\lambda_i^n$, 
if $T_n^\delta(1)<T_1^\delta\wedge T$, 
\begin{align}\label{small_interval}
\max_{1\leq i \leq d}& \sup_{ T_n^\delta(1)\leq t< T_1^\delta\wedge T} | \lambda_i^\delta(t) - \lambda_i^{n,\delta}(t)| \\
&\leq \max_{1\leq i \leq d} \sup_{ T_n^\delta(1)\leq t< T_1^\delta\wedge T}\{ |\lambda_i^n(t)-\lambda_i^n(T_n^\delta(1))| 
+ |\lambda_i(t)-\lambda_i(T_n^\delta(1))|\} \\ 
&+ |\lambda_i^{n}(T_n^\delta(1))-\lambda_i(T_n^\delta(1))|\notag
\end{align}
goes to zero in probability, when $n$ goes to infinity.

$\bullet$ Suppose the property is true for $\ell$ and let us show that it is then true for $\ell+1$.
By the same argument as in the proof of Proposition \ref{firsttheo}, we can show that, for all $t\in[T_\ell^\delta; T_n^\delta(\ell+1) \wedge T_{\ell+1}^\delta]$, we have
\begin{align}\label{eq_limndelta}
\sum_{i=1}^{d}& \left(\lambda_i^{n,\delta}-\lambda_i^{\delta}\right)^2(t) \leq 
\sum_{i=1}^{d} \left(\lambda_i^{n,\delta} -\lambda_i^{\delta}\right)^2(T_\ell^\delta) \\
&+ 2\beta \int_{T_\ell^\delta}^{t} (\epsilon_s^n-p) \sum_{i=1}^{d} \sum_{j \not = i} \frac{\lambda_i^{n,\delta}(s)-\lambda_i^\delta(s)}{\lambda_i^{\delta}(s)-\lambda_j^\delta(s)} ds. \notag
\end{align}
The same proof as in Proposition \ref{firsttheo} shows that, 
if $\tau^{3,\ell}_\e$ is the stopping time $\tau^3_\ell$ for the process
$\lambda^\delta(t), t\ge T_\ell^\delta$,

\begin{equation}
\lim_{n\to \infty} \sup_{t\in [T_\ell^\delta;T_n^\delta(\ell+1) \wedge T_{\ell+1}^\delta\wedge \tau^{3,\ell}_\e]} 
\int_{T_\ell^\delta}^{t} (\e_s^n-p) \sum_{i=1}^{d} \sum_{j \not = i} \frac{\lambda_i^{n,\delta}(s)-\lambda_i^\delta(s)}{\lambda_i^{\delta}(s)-\lambda_j^\delta(s)} ds=0 \quad a.s.
\end{equation}
Thus, because of \eqref{eq_limndelta}, the following convergence in holds
\begin{equation}\label{conv_biginterval}
\lim_{n\to \infty} \max_i \sup_{t\in [T_\ell^\delta;T_n^\delta(\ell+1) \wedge T_{\ell+1}^\delta \wedge\tau^3_\e]} |\lambda_i^{n,\delta}(t)-\lambda_i^{\delta}(t)| =0\quad a.s\,.
\end{equation}
Because of \eqref{conv_biginterval}, we have $T_{\ell+1}^\delta\wedge\tau_\e^3 \leq\liminf_{n\to \infty} T_n^\delta(\ell+1)\wedge \tau^3_\e$.
Since the probability that $\tau^3_\e$ is larger than $ T$ goes
to one as $\e$ vanishes,  we 
can show as in \eqref{small_interval} (note that Lemma \ref{regularity_lambda_n}, Lemma \ref{lambda_holder} and Proposition \ref{firsttheo} extend to 
$\{\lambda^{n,\delta}_{t},\lambda^\delta_t,  t\ge T^\delta_\ell\}$)
that in probability, 
\begin{align*}
\lim_{n\to\infty} \max_{1\leq i \leq d}& \sup_{ T_n^\delta(\ell+1) \leq t\leq T_{\ell+1}^\delta} | \lambda_i^\delta(t) - \lambda_i^{n,\delta}(t)| =0.
\end{align*}
The property at rank $\ell+1$ is established. The Lemma is proved. 
 \qed

\begin{lemma}\label{comp_discret}
There exists a constant $c >0$ such that for all $L \in \N$, we have the following almost sure estimate
\begin{align*}
 \max_{1\leq j \leq d} \sup_{0 \leq t \leq T_{L}^\delta} 
|\lambda_j^{n,\delta}(t) - \lambda_j^n(t)| \leq \delta\, L \, \sqrt{c} \,.
\end{align*}
\end{lemma}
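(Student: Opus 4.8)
The plan is to reproduce the induction of Lemma~\ref{lemjk}, now comparing $\lambda^{n,\delta}$ with $\lambda^n$ rather than $\lambda^\delta$ with $\lambda$. Both processes are built from the common Brownian motions $(b^i)$ and the common Bernoulli sequence $(\epsilon^n_k)$ supplied by Proposition~\ref{repr_eigenvalues}, and $\lambda^{n,\delta}$ differs from $\lambda^n$ only through the $\delta$-separations carried out at $T^\delta_1,\dots,T^\delta_L$. With $c:=\sum_{i=1}^d i^2$, I will show by induction on $\ell$ that
$$\sup_{0\le t\le T^\delta_\ell}\Big(\sum_{i=1}^d\big(\lambda_i^{n,\delta}(t)-\lambda_i^n(t)\big)^2\Big)^{1/2}\le \delta\,\ell\,\sqrt c\,,$$
which gives the lemma, $\max_j|\lambda_j^{n,\delta}-\lambda_j^n|$ being dominated by this $\ell^2$-norm. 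The case $\ell=1$ is immediate: $\lambda^{n,\delta}=\lambda^n$ on $[0,T^\delta_1)$ by Definition~\ref{def_lambda_n,delta}, while at $t=T^\delta_1$ the separation forces $(\lambda_i^{n,\delta}-\lambda_i^n)(T^\delta_1)=i\delta$, so the $\ell^2$-norm equals $\delta\sqrt c$.

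For the inductive step I would prove that $t\mapsto\big(\sum_i(\lambda_i^{n,\delta}(t)-\lambda_i^n(t))^2\big)^{1/2}$ is non-increasing on $[T^\delta_\ell,T^\delta_{\ell+1})$, an interval that contains no separation; the induction then closes at $t=T^\delta_{\ell+1}$, where $\lambda^n$ is continuous and $\lambda^{n,\delta}$ jumps by $(\delta,2\delta,\dots,d\delta)$, so by the triangle inequality in $\ell^2$ and the inductive hypothesis
$$\Big(\sum_i(\lambda_i^{n,\delta}-\lambda_i^n)^2(T^\delta_{\ell+1})\Big)^{1/2}\le\Big(\sum_i(\lambda_i^{n,\delta}-\lambda_i^n)^2(T^\delta_{\ell+1}-)\Big)^{1/2}+\delta\sqrt c\le\delta(\ell+1)\sqrt c\,.$$
To get the monotonicity I would argue grid-interval by grid-interval $[k/n,(k+1)/n)$. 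If $\epsilon^n_k=1$, both configurations evolve as Dyson Brownian motions driven by the same Brownian motions and stay ordered, the Brownian parts of $\lambda_i^{n,\delta}-\lambda_i^n$ cancel, and It\^o's formula gives
\begin{align*}
\frac{d}{dt}\sum_i(\lambda_i^{n,\delta}-\lambda_i^n)^2 &= -2\gamma\sum_i(\lambda_i^{n,\delta}-\lambda_i^n)^2 \\
&\quad + 2\beta\sum_i\sum_{j\neq i}(\lambda_i^{n,\delta}-\lambda_i^n)\Big(\frac{1}{\lambda_i^{n,\delta}-\lambda_j^{n,\delta}}-\frac{1}{\lambda_i^n-\lambda_j^n}\Big)\ \le\ 0\,,
\end{align*}
the last inequality being the symmetrization \eqref{astuce} (each symmetrized summand is $\le 0$ since both families are ordered). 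If $\epsilon^n_k=0$ there is no interaction: the two configurations are orderings of Ornstein--Uhlenbeck vectors $\mu^n,\mu^{n,\delta}$ driven by the same Brownian motions, so $\|\mu^n(t)-\mu^{n,\delta}(t)\|_2=e^{-\gamma(t-k/n)}\|\mu^n(k/n)-\mu^{n,\delta}(k/n)\|_2$, and since the sorting map is $\ell^2$-nonexpansive, $\|\mathrm{sort}(u)-\mathrm{sort}(v)\|_2\le\|u-v\|_2$ (rearrangement inequality), the sorted $\ell^2$-distance does not increase over such an interval. Concatenating the grid-intervals yields the monotonicity on $[T^\delta_\ell,T^\delta_{\ell+1})$.

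I expect the only genuine obstacle to be the bookkeeping of the re-ordering: $\lambda^n,\lambda^{n,\delta}$ are orderings of the diffusions of \eqref{defmu}--\eqref{dynamic_lambda_n,delta}, and an Ornstein--Uhlenbeck phase may permute these diffusions differently for the two processes, so at the start of a subsequent Dyson phase their underlying diffusions need not be in the same order — which would destroy both the cancellation of the Brownian parts and the sign condition behind \eqref{astuce}. This is harmless because relabelling the particles by rank at a grid point and then driving them by the rank-indexed members of an i.i.d.\ family leaves the law of the ordered process unchanged (the driving Brownian motions being i.i.d.); one may therefore realise the joint law of $(\lambda^n,\lambda^{n,\delta})$ so that at every grid point both vectors are sorted and then driven by common rank-labelled Brownian motions, and with that convention the two cases above hold verbatim, exactly as in the computations of Proposition~\ref{firsttheo} and Lemma~\ref{lemjk}.
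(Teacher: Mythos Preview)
Your proof is correct and follows essentially the same route as the paper's: induction on $\ell$, the symmetrization \eqref{astuce} on non-colliding stretches, and the $\ell^2$-nonexpansiveness of sorting (which the paper invokes via \cite[Lemma~2.1.19]{AGZ}) on the Ornstein--Uhlenbeck stretches. The paper organizes $[T_\ell^\delta,T_{\ell+1}^\delta)$ by successive collision times $t_1<\tau_1<\cdots$ rather than grid-interval by grid-interval, and it handles the relabelling issue you flag by redefining $\mu^n(t_1)=\lambda^n(t_1)$, $\mu^{n,\delta}(t_1)=\lambda^{n,\delta}(t_1)$ driven by the common $b^i$ at each collision time --- which is exactly your rank-labelling convention.
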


{\bf Proof.} Note that the estimate is striaghtforward on $[0, T_1^\delta]$.
We then proceed by induction on the time intervals $[T_\ell^\delta,T_{\ell+1}^\delta]$ as 
 in the proof  of Lemma \ref{lemjk}
until the first collision time
$$t_1:= \inf\{ t \geq T_\ell^\delta \,: \exists i, \,\lambda_i^n(t) = \lambda_{i-1}^n(t)\, \text{or} \, \lambda_i^{n,\delta}(t) = \lambda_{i-1}^{n,\delta}(t) \}\,.$$

We next claim that, at a given time, almost surely the eigenvalues $\lambda^n$ are different. Indeed, this is clear if the eigenvalues follows Brownian motion and even more  when they follow  Dyson Brownian motion. Moreover the probability that  more than two eigenvalues collide at some time vanishes. Indeed, this can only happen if the eigenvalues follow the Brownian motion. But the probability
that 3 Brownian motions collide vanishes and hence the result.	 

Hence, there are almost surely
at most two eigenvalues which can collide. Hence, 
let $i(t_1)$ be the unique integer in $\{1,\dots,d\}$ such that $\lambda_i^n(t_1)=\lambda_{i-1}^n(t_1)$ (respectively $\lambda_i^{n,\delta}(t_1)=
\lambda_j^{n,\delta}(t_1)$) and let $\tau_1=([nt_1]+1)/n$.
Notice that, for $t \in[[nt_1]/n; ([nt_1]+1)/n )$, we necessarily have $\e_t^n = 0$.
Let $\mu_i^{n,\delta}$ and $\mu_i^n$ for $i\in \{1,\dots,d \}$ be the processes such that for $t \in[t_1; \tau_1]$
\begin{align*}
d\mu_i^{n,\delta}(t) &= - \gamma \mu_i^{n,\delta}(t) dt +\sqrt{2} db^i_t\, \\
d\mu_i^{n}(t) &= - \gamma \mu_i^{n}(t) dt +\sqrt{2} db^i_t
\end{align*}
with initial conditions at $t=t_1$ respectively given by $\mu^{n,\delta}(t_1)=\lambda^{n,\delta}(t_1)$ and 
$\mu^{n}(t_1)=\lambda^{n}(t_1)$. 
We know that the $\lambda_i^{n,\delta}$, respectively the $\lambda_i^n$, are just a re-ordering of the processes $\mu_i^{n,\delta}$ and $\mu_i^n$

By definition, for  $t\in[t_1;\tau_1]$, we find that :
\begin{align*}
(\mu_j^{n,\delta} - \mu_j^n)(t) &= e^{-\gamma(t-t_1)}
(\mu_j^{n,\delta} - \mu_j^n)(t_1)\,.
\end{align*}
As a consequence, we deduce that
\begin{equation*}
\sum_{j=1}^d (\mu_j^{n,\delta}-\mu_j^n)^2(t)\leq \sum_{j=1}^d (\lambda_j^{n,\delta}-\lambda_j^n)^2(t_1)\,.
\end{equation*}
Moreover, as the $\lambda$'s are ordered but the set of the values of
the $\lambda$'s and the $\mu$'s are the same, using for instance \cite[ lemma 2.1.19]{AGZ}, we have that
\begin{equation*}
 \sum_{j=1}^d (\lambda_j^{n,\delta}-\lambda_j^n)^2(t)\leq \sum_{j=1}^d (\mu_j^{n,\delta}-\mu_j^n)^2(t)\,.
\end{equation*} 

Gathering the above inequalities, we have shown that
 \begin{align*}
\sup_{t\in [0,\tau_1]}
\sum_{j=1}^d(\lambda_j^{n,\delta} - \lambda_j^n)^2(t) \leq \sum_{j=1}^d (\lambda_j^{n,\delta}-\lambda_j^n)^2(T_\ell^\delta) \,.
\end{align*}
We can continue inductively until we  reach the time $T_{\ell+1}^\delta$ to finish the proof.

\section{Asymptotic properties of the eigenvectors}\label{section_eigenvectors}

Recall that $w_{ij}^\beta, i< j $ are real (respectively complex) standard Brownian motions if $\beta=1$ (resp. $\beta=2$) with quadratic variation $\beta t$ and that we also set  
for $i<j$, $w_{ji}^\beta:=\bar{w}_{ij}^\beta$.  
In addition we also defined the skew Hermitian matrix $R^\beta=-(R^\beta)^*$ by setting for $i < j$,
\begin{equation*}
dR_{ij}^\beta(t) = \frac{dw_{ij}^\beta(t)}{\lambda_i^n(t)-\lambda_j^n(t)}, \quad R_{ij}^\beta(0)=0 \,.
\end{equation*}

{\it Proof of Proposition \ref{good_O_n}}

It is classical to check that the unique strong solution of the stochastic differential equation 
\begin{equation}\label{eq_O_n}
dO_n^\beta(t) = \epsilon_t^n O_n^\beta(t) dR^\beta(t) - \frac{\epsilon_t^n}{2} O_n^\beta(t) d\langle(R^\beta)^*,R^\beta\rangle_t \,,
\end{equation}
with initial condition $O_n^\beta(0):=O^\beta(0)$ (defined at the end of Section \ref{intro}),
is in the space $\mathcal{O}^\beta_d$ for all time $t$ (see e.g. \cite[Lemma 4.3.4]{AGZ}) 
and is such that, with $\Delta_n^\beta(t)$ being the diagonal matrix of the ordered (as in \eqref{order_eigenvalues_def}) eigenvalues of $M_n^\beta(t)$, 
we have  
\begin{equation*}
O_n^\beta(t) \Delta_n^\beta(t) O_n^\beta(t)^* \stackrel{law}{=} M_n^\beta(t)\,.
\end{equation*}
The law of the continuous process $O_n^\beta$ is uniquely determined as the unique strong solution of \eqref{eq_O_n}. 
\qed
 
One can thus define the eigenvectors of $M_n^\beta(t)$, denoted as $\phi^n_i(t)$, so that they satisfy the stochastic differential system  
\begin{equation}\label{eqev}
d\phi^n_i (t)=\epsilon^n_t   \sum_{j\neq i}\frac{dw_{ij}^\beta(t)}{\lambda^n_i(t)-\lambda^n_j(t)} \phi^n_j(t)
-\frac{\epsilon^n_t}{2} \sum_{j\neq i} \frac{\beta}{(\lambda^n_i(t)-\lambda^n_j(t))^2} dt \phi^n_i(t)
\end{equation}
where $w_{ij}^\beta, i< j$ is a family of i.i.d. Brownian motions (on $\R$ if $\beta=1$, $\C$ if $\beta=2$), independent of the eigenvalues $\lambda_i^n, 1\leq i \leq d$.

{\it Proof of Theorem \ref{theo_vectors}}

This proof is classical and uses the theory of stability for stochastic differential equations.

For $\eta>0$ fixed, we  deduce from Proposition \ref{firsttheo} 
and Lemma \ref{multiple_collisions} that
the process $(\lambda_1^n(t),\dots,\lambda_d^n(t))$ 
converges almost surely 
in the space of continuous functions $\mathcal{C}([0;(T_1-\eta)\wedge T],\R^d)$ (respectively $\C^d$) if $\beta=1$ (resp. $\beta=2$) endowed with the uniform 
norm towards $(\lambda_1(t),\dots,\lambda_d(t))_{0\leq t \leq (T_1-\eta)\wedge T}$ where the $\lambda_i$'s are the unique strong solutions of \eqref{theeqlim} (with the same Brownian
motions $b^i$) and where 
$T_1$ is the first collision time of the $\lambda_i,1\leq i \leq d$. In the sequel we will work conditionally to the $(\lambda_i^n,\lambda_i)$'s satisfying the above convergence.

Define for $i\neq j$ the processes $w_{ij}^{\beta,n}$ by setting 
\begin{equation}
w_{ij}^{\beta,n}(t) = \int_0^t \epsilon_s^n dw_{ij}^\beta(s) \,.
\end{equation}
Note that the quadratic variation of this continuous martingale converges almost surely towards $\beta p t$ so that by Rebolledo's theorem $(w_{ij}^{\beta,n}, i<j)$ converges towards $(\sqrt{p} w^\beta_{ij}, i<j)$.

Moreover, if $T_1^\epsilon$ is the first time at which two eigenvalues 
are at distance less than $\epsilon$, the drift coefficients being bounded, we see, with a proof similar to the proof of Proposition \ref{firsttheo}, that for $i\neq j$
$$\int_0^{t\wedge T_1^\epsilon}
 \frac{\epsilon^n_s}{(\lambda_i^n-\lambda_{j}^n)^2(s)}ds $$
converges  towards $p\int_0^{t\wedge T_1^\epsilon} (\lambda_i(s)-\lambda_j(s))^{-2}ds $ uniformly almost surely.  Since $T_1^\epsilon$ converges towards $T_1$ as $\epsilon$ goes to zero, the convergence holds till $(T_1-\eta)\wedge T$ for any $\eta>0$.

Gathering the above arguments, the result follows from \cite[Theorem 6.9, p. 578]{Jacod}.
\qed

We now turn to the analysis of the behavior of the 
columns $\phi_i(t)$ of the matrix $O^\beta(t)$ when $t \ra T_1$ with $t<T_1$. Those vectors $\phi_i(t)$ form an orthonormal basis of $\R^d$ (respectively $\C^d$) 
if $\beta=1$ (resp. $\beta=2$) and it is easy to check that they verify the following stochastic differential system
\begin{equation}\label{eqevlim}
d\phi_i (t) =  \sum_{j\neq i}\frac{\sqrt{p} }{\lambda_i(t)-\lambda_j(t)} dw_{ij}^\beta(t)\phi_j(t)
-\frac{p\beta}{2} \sum_{j\neq i} \frac{dt}{(\lambda_i(t)-\lambda_j(t))^2} \phi_i(t) \,.
\end{equation}

In the following of this section, we will denote by $i^*$ the unique (because of Lemma \ref{unicity_i}) index such that $\lambda_{i^*}(T_1)=\lambda_{i^*-1}(T_1)$.

The main issue we meet at this point in the presence of collisions (that {\it will} occur if $p\beta <1$; see \cite{Cepa}) lies in the divergence of the integral 
$\ref{div_integral}$ that we now prove.


We now describe the behavior of the $d-2$ vectors $\phi_j(t), j\neq i^*, i^*-1$ just before the first collision time $T_1$. 


{\it Proof of the first statement of Proposition \ref{limit_phi_T_1}}

We will denote by $\phi_{j\ell}(t)$ the $\ell$-th entry of the $d$-dimensional vector $\phi_j(t)$.
For $0\leq t<T_1$, we have 
\begin{equation}\label{phijC0}
d\phi_j(t) = \sum_{k \neq j} \frac{\sqrt{p}}{\lambda_j(t)-\lambda_k(t)} dw_{jk}^\beta(t) \phi_k(t) - \frac{p}{2} \sum_{k\neq j} \frac{\beta}{(\lambda_j-\lambda_k)^2} \phi_j(t) dt\,.
\end{equation}
We recall from section \ref{estimate_collisions} that there are no multiple collisions nor two collisions at the same time for the system $(\lambda_1(t),\lambda_2(t),\dots,\lambda_d(t))_{0\leq t\leq T_1}$
verifying \eqref{theeqlim}, and therefore we may assume without loss of generality  that  for $j\neq i^*,i^*-1$, every diffusions and drift terms of \eqref{phijC0} remains almost surely bounded for $t\in [0;T_1]$. 
To prove the lemma, we just need to prove that almost surely
\begin{equation*}
\lim_{s\rightarrow T_1; \atop s <T_1} \sup_{s\leq t < T_1} \|\phi_j(t)-\phi_j(s)\|_2 = 0\,.
\end{equation*}
The drift terms appearing in \eqref{phijC0} are obvious to deal with since $1/(\lambda_j-\lambda_k)(t)$ is bounded in the vicinity of $T_1$ and that $|\phi_{j\ell}(t)|\leq 1$ for all $t<T_1$. 
For the diffusion terms, we have for every $\ell\in \{1,\dots,d\}$ and for every $s\in[0;T_1]$ the following estimate
\begin{equation*}
\P\left[\sup_{s\leq t< T_1} | \int_s^t \sum_{k \neq j} \frac{\sqrt{p}}{\lambda_j(u)-\lambda_k(u)} dw_{jk}^\beta(u) \phi_{k\ell}(u) | > \eta \right]
\leq \exp(-\frac{\eta^2}{2\beta p (d-1) M (T_1-s)})\,,
\end{equation*}
where $M = \sup_{t\in [0;T_1]} \max_{k\neq j} \frac{1}{(\lambda_j-\lambda_k)^2(t)}$. Using the Borel-Cantelli Lemma, we deduce the result.
\qed

For $\delta>0$, we want to define a process $(\tilde{\phi}_1(t),\tilde{\phi}_2(t),\dots,\tilde{\phi}_d(t))_{T_1-\delta \leq t < T_1}$ that will be a good approximation of the process 
$(\phi_1(t),\phi_2(t),\dots,\phi_d(t))_{T_1-\delta \leq t < T_1}$ on the time interval $[T_1-\delta;T_1]$.
Hence for $j\neq i^*, i^*-1$, we set $\tilde{\phi}_j(t)=\tilde{\phi}_j$ (the vectors do not depend of time). It remains to define the evolution for 
$(\tilde{\phi}_{i^*-1}(t),\tilde{\phi}_{i^*}(t))$ that will depend of time $t$.
 
Let $V$ be the $(d-2)$-dimensional subspace spanned by the orthonormal family $\{\tilde\phi_j; j \neq i^*,i^*-1\}$ and $W$ 
its orthogonal 
complement in $\R^d$.
Let us define the``diffusive orthonormal basis'' in the space $W$ that will describe the evolution of the two vectors $(\tilde{\phi}_{i^*-1}(t),\tilde{\phi}_{i^*}(t))$ on the interval 
$[T_1-\delta;T_1]$ (up to the initial conditions at time $t=T_1-\delta$ we will explicit later).
\begin{lemma}\label{def_tilde_phi_i_star}
Let $\delta >0$ and $(u,v)$ an orthonormal basis of the two-dimensional subspace $W$. We consider the following stochastic differential system
\begin{align}\label{sds_tilde_phi}
d\tilde{\phi}_{i^*}(t) &= \frac{\sqrt{p}}{(\lambda_{i^*}-\lambda_{i^*-1})(t)} dw_{i^*-1,i^*}^\beta(t) \,\,\tilde{\phi}_{i^*-1}(t) - 
\frac{p\beta}{2} \frac{dt}{(\lambda_{i^*}-\lambda_{i^*-1})^2(t)} \tilde{\phi}_{i^*}(t)\,, \\ 
d\tilde{\phi}_{i^*-1}(t) &= -\frac{\sqrt{p}}{(\lambda_{i^*}-\lambda_{i^*-1})(t)} d\bar{w}_{i^*-1,i^*}^\beta(t) \,\, \tilde{\phi}_{i^*}(t) - 
\frac{p\beta}{2} \frac{dt}{(\lambda_{i^*}-\lambda_{i^*-1})^2(t)} \tilde{\phi}_{i^*-1}(t)\notag\,
\end{align}
with initial conditions $(\tilde{\phi}_{i^*-1}(T_1-\delta),\tilde\phi_{i^*}(T_1-\delta))=(u,v)$.

This stochastic differential system has a unique strong solution defined on the interval $[T_1-\delta;T_1)$ such that 
for each $t\in [T_1-\delta;T_1)$, $\{\tilde{\phi}_{i^*-1}(t),\tilde{\phi}_{i^*}(t)\}$ is an orthonormal basis of $W$. 
\end{lemma}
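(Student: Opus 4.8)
The plan is to establish existence and uniqueness by localizing away from the singular time $T_1$, and then to check by Itô's formula that the Gram matrix of the pair $(\tilde\phi_{i^*-1},\tilde\phi_{i^*})$ stays frozen at the identity; this is exactly the eigenvector computation of \cite[Lemma 4.3.4]{AGZ} adapted to a single colliding pair.

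\emph{Step 1 (existence and uniqueness).} I would work conditionally on the eigenvalue path $(\lambda_1,\dots,\lambda_d)$, recalling that by Lemma \ref{unicity_i} the index $i^*$ is well defined and that all eigenvalue gaps are strictly positive on $[0,T_1)$. For any $s\in[T_1-\delta,T_1)$, the function $t\mapsto(\lambda_{i^*}(t)-\lambda_{i^*-1}(t))^{-1}$ is continuous, hence bounded on the compact interval $[T_1-\delta,s]$, so the coefficients of \eqref{sds_tilde_phi} are bounded there and the system is linear in $(\tilde\phi_{i^*-1},\tilde\phi_{i^*})$; classical Lipschitz theory then yields a unique strong solution on $[T_1-\delta,s]$, and letting $s\uparrow T_1$ and gluing by uniqueness produces the unique strong solution on $[T_1-\delta,T_1)$. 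I would stress that nothing is claimed at $t=T_1$, and nothing can be: by \eqref{div_integral} the drift has a non-integrable singularity there; but Step 2 shows the two vectors stay of norm one, so there is no blow-up before $T_1$.

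\emph{Step 2 (orthonormality).} I would group the two vectors into the $d\times2$ matrix $\Phi(t)=(\tilde\phi_{i^*-1}(t)\mid\tilde\phi_{i^*}(t))$, so that \eqref{sds_tilde_phi} reads $d\Phi=\Phi\,dN$ with $dN=\bigl(\begin{smallmatrix}-c\,dt & a\,dw\\ -a\,d\bar w & -c\,dt\end{smallmatrix}\bigr)$, where $a=\sqrt p/(\lambda_{i^*}-\lambda_{i^*-1})$, $c=\tfrac{p\beta}{2(\lambda_{i^*}-\lambda_{i^*-1})^2}$ and $w=w^\beta_{i^*-1,i^*}$. For $Y=\Phi^*\Phi$, Itô's formula gives a \emph{linear} SDE $dY=dN^*Y+Y\,dN+dC(Y)$ in which the bracket term $dC(Y)$ comes only from pairing the $(1,2)$ and $(2,1)$ martingale entries of $N$ and hence equals $\beta a^2\,dt\,I_2$ when evaluated at $Y=I_2$; since $dN+dN^*=-2c\,dt\,I_2$ and $\beta a^2=2c$ by the explicit forms of $a$ and $c$, the constant process $Y\equiv I_2$ solves this SDE, and with the orthonormal initial data $(u,v)$ pathwise uniqueness (valid on each $[T_1-\delta,s]$ since the coefficients are bounded there) forces $\Phi(t)^*\Phi(t)=I_2$ throughout $[T_1-\delta,T_1)$.

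\emph{Step 3 (the vectors stay in $W$) and the obstacle.} The equation $d\Phi=\Phi\,dN$ shows that $d\tilde\phi_{i^*}$ and $d\tilde\phi_{i^*-1}$ are linear combinations of $\tilde\phi_{i^*}$ and $\tilde\phi_{i^*-1}$ alone, so projecting onto $V=W^\perp$ gives a closed linear system for $(P_V\tilde\phi_{i^*-1},P_V\tilde\phi_{i^*})$ with zero initial value, whence $\tilde\phi_{i^*-1}(t),\tilde\phi_{i^*}(t)\in W$ for every $t$; together with $\Phi^*\Phi=I_2$ and $\dim W=2$ this is exactly the asserted orthonormal basis property. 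I expect the only genuinely delicate point to be the bookkeeping around the half-open interval forced by the singularity at $T_1$ (one must resist trying to close things up at $T_1$); the real analytic input is the elementary identity $\beta a^2=2c$ — the Itô–Stratonovich correction that makes orthonormality invariant — with everything else being routine linear-SDE theory.
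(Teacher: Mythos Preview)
Your proposal is correct and follows essentially the same approach as the paper: localize away from $T_1$ to get existence and uniqueness from the linear structure with bounded coefficients, then invoke the orthonormality computation of \cite[Lemma 4.3.4]{AGZ}. The paper's proof is in fact much terser than yours---it simply cites \cite{AGZ} for the second part---so your Steps 2 and 3 spell out explicitly the Gram-matrix Itô computation and the invariance of $W$ that the paper leaves to that reference; the key identity $\beta a^2=2c$ you isolate is exactly the cancellation underlying \cite[Lemma 4.3.4]{AGZ} in this two-dimensional setting.
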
 
\begin{proof}
For all $\epsilon >0$,  the function $t\rightarrow 1/(\lambda_{i^*}-\lambda_{i^*-1})(t)$ is bounded  on the interval $[T_1-\delta; T_1^\epsilon]$ 
and therefore there is a unique 
strong solution to the stochastic differential system \eqref{sds_tilde_phi}
till the time $T_1^\epsilon$ where $|\lambda_{i^*}-\lambda_{i^*-1}|<\epsilon$
as it is driven by bounded linear drifts. As $T_1^\epsilon$ grows to $T_1$
the proof is complete.

To show that for all $t\in [T_1-\delta;T_1)$ the family $\{\tilde{\phi}_{i^*-1}(t),\tilde{\phi}_{i^*}(t)\}$ is an orthonormal basis of $W$, we proceed along the same line 
as in the proof of \cite[Lemma 4.3.4]{AGZ}.  
\end{proof}

In the following lemma, we show that we can choose a constant $\delta >0$ small enough and an initial condition $(u,v)\in W$ such that the 
processes $(\tilde{\phi}_1(t),\dots,\tilde{\phi}_1(t))_{t\in[T_1-\delta;T_1)}$ defined by Lemma \ref{def_tilde_phi_i_star} 
is indeed a good approximation of the process $(\phi_1(t),\dots,\phi_d(t))_{t\in[T_1-\delta;T_1)}$. The advantage of the 
process $(\tilde{\phi}_1(t),\dots,\tilde{\phi}_1(t))_{t\in[T_1-\delta;T_1)}$
is that it is simpler to study in the vicinity of $T_1$ (see Lemma \ref{study_tilde_phi} below). 

\begin{lemma}\label{comp_phi_phi_tilde}
Let $\eta>0$ and $\kappa>0$.
Then there exists an orthonormal basis $(u,v)$ of $W$ and $\delta >0$ small enough such that if we denote by $(\tilde{\phi}_{i^*-1}(t),\tilde{\phi}_{i^*}(t))_{t\in [T_1-\delta;T_1)}$ 
the unique strong solution
of the stochastic differential system \eqref{sds_tilde_phi} with initial conditions given in $t_0=T_1-\delta$ by $(\tilde{\phi}_{i^*-1}(t_0),\tilde\phi_{i^*}(t_0))=(u,v)$, 
we have
\begin{align*}
\P\left(\sup_{t\in [t_0;T_1)}||\phi_{i^*}(t)-\tilde{\phi}_{i^*}(t)||_2^2 + ||\phi_{i^*-1}(t)-\tilde{\phi}_{i^*-1}(t)||_2^2 \geq \eta\right)\le \kappa\,.
\end{align*}
\end{lemma}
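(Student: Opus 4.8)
The plan is to compare the two systems \eqref{eqevlim} and \eqref{sds_tilde_phi} on $[t_0,T_1)$ via an $L^2$ (Itô) estimate, controlling the difference process until the first time the diffusion coefficients blow up, and then to let $\delta\downarrow 0$. First I would set $t_0 = T_1-\delta$ and for $\epsilon>0$ let $T_1^\epsilon$ be the first time the gap $(\lambda_{i^*}-\lambda_{i^*-1})$ drops below $\epsilon$; on $[t_0,T_1^\epsilon]$ all the coefficients $(\lambda_j-\lambda_k)^{-1}$ for $j\neq k$ are bounded, so Itô's formula applied to $D_\epsilon(t):=\sum_{j}\|\phi_j(t)-\tilde\phi_j(t)\|_2^2$ (where $\tilde\phi_j\equiv\tilde\phi_j$ is constant for $j\neq i^*,i^*-1$, and $\tilde\phi_{i^*},\tilde\phi_{i^*-1}$ solve \eqref{sds_tilde_phi}) produces a local martingale plus a drift bounded by $C_\epsilon\int_{t_0}^t D_\epsilon(s)\,ds$ plus an inhomogeneous term coming from the fact that, in the true equation, $\phi_{i^*}$ and $\phi_{i^*-1}$ receive contributions from $w_{i^*k}^\beta,w_{i^*-1,k}^\beta$ with $k\neq i^*,i^*-1$ and from the quadratic-variation drift over those same $k$, whereas the approximating process keeps only the $k\in\{i^*,i^*-1\}$ interaction. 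Those extra source terms involve $(\lambda_{i^*}-\lambda_k)^{-1}$ and $(\lambda_{i^*-1}-\lambda_k)^{-1}$, which stay bounded near $T_1$ by Lemma \ref{unicity_i} and the first statement of Proposition \ref{limit_phi_T_1}; hence they can be made small in sup norm with high probability by shrinking $\delta$, since the driving Brownian increments $w_{i^*k}^\beta(t)-w_{i^*k}^\beta(t_0)$ and the Lebesgue integrals over $[t_0,t]$ are both small on an interval of length $\le\delta$.

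The key steps, in order, are: (i) reduce to the stopped interval $[t_0,T_1^\epsilon]$ and record that $T_1^\epsilon\uparrow T_1$ as $\epsilon\downarrow 0$, using Proposition \ref{first_coll_singularity} / Lemma \ref{lemCepa} to see $T_1^\epsilon<T_1$ for $\epsilon$ small; (ii) choose the initial data $(u,v)$: since $\{\tilde\phi_j:j\neq i^*,i^*-1\}$ is obtained from the convergence in the first statement of Proposition \ref{limit_phi_T_1}, for $\delta$ small the true vectors $\phi_j(t_0)$, $j\neq i^*,i^*-1$, are within $\eta'$ of $\tilde\phi_j$, and we take $(u,v)$ to be any orthonormal basis of the (random, two-dimensional) orthocomplement $W$ chosen close to $(\phi_{i^*-1}(t_0),\phi_{i^*}(t_0))$ — e.g. the Gram–Schmidt projection of the latter onto $W$ — so that $D_\epsilon(t_0)$ is as small as we like; (iii) write the Itô differential of $D_\epsilon$, split into the $\langle\cdot\rangle$-drift from the $i^*/i^*-1$ interaction (which after symmetrization is $\le C_\epsilon D_\epsilon$, exactly as in the proof of Proposition \ref{firsttheo}), the source/inhomogeneous martingale and drift terms supported on the bounded directions, and a genuine martingale $N_\epsilon$; (iv) apply Gronwall after a supremum bound: for the martingale parts use the exponential martingale inequality \cite[Corollary H.13]{AGZ} (the brackets are $O(\delta)$ on $[t_0,T_1^\epsilon]$ once $\epsilon$ is fixed, so the sup is small with probability $\ge 1-\kappa/4$), obtaining $\E[\sup_{t\le T_1^\epsilon}D_\epsilon(t)\wedge 1]\le (D_\epsilon(t_0)+\text{small})e^{C_\epsilon\delta}$; (v) convert to a probability bound via Markov, and finally send $\epsilon\downarrow 0$ using $T_1^\epsilon\uparrow T_1$ together with continuity of $\phi_j,\tilde\phi_j$ on $[t_0,T_1)$ to upgrade the estimate on $[t_0,T_1^\epsilon]$ to one on $[t_0,T_1)$.

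The delicate point is the order of quantifiers: the constant $C_\epsilon$ in the Gronwall step blows up as $\epsilon\to 0$ (it is of order $\varepsilon^{-2}$, reflecting the divergence \eqref{div_integral}), so one cannot first fix $\delta$ and then shrink $\epsilon$. Instead one fixes $\epsilon$ first so that $\P(T_1^\epsilon<T_1)$ is negligible—no, more precisely, one does \emph{not} need $C_\epsilon\delta$ small: the comparison is run only up to $T_1^\epsilon$, where $C_\epsilon$ is finite, and the point is that $D_\epsilon(t_0)$ and the source terms are small \emph{uniformly in $\epsilon$} because they live on the bounded directions whose coefficients do not degenerate at $T_1$ (Lemma \ref{unicity_i}). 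Thus the scheme is: pick $\epsilon$ small enough that $\{T_1-T_1^\epsilon\ \text{is small}\}$ holds with probability $\ge 1-\kappa/4$ and the non-degenerate gaps exceed some $c>0$ there with the same probability; then pick $\delta$ small enough (depending on this $\epsilon$) that $D_\epsilon(t_0)+\text{source}\le \eta e^{-C_\epsilon\delta}/2$ with probability $\ge 1-\kappa/2$; the Gronwall bound then closes. The main obstacle is precisely bookkeeping this dependence $\delta=\delta(\epsilon)$ correctly and verifying that the source terms are controlled uniformly in $\epsilon$ — everything else is the standard $L^2$-stability computation already used in Proposition \ref{firsttheo}.
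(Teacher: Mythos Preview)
Your scheme has a genuine gap: the Gronwall route cannot close because the singular constant does blow up, and your limiting argument does not rescue it. Concretely, your bound reads
\[
\sup_{t\in[t_0,T_1^\epsilon]} D(t)\;\le\;\bigl(D(t_0)+\text{source}\bigr)\,e^{C_\epsilon\delta},
\]
with $C_\epsilon\sim\epsilon^{-2}$ as you say. To recover $\sup_{[t_0,T_1)}D$ you must let $\epsilon\downarrow 0$ with $\delta$ \emph{fixed} (otherwise $t_0=T_1-\delta$ moves and you never produce a single $\delta$ as the lemma requires), and then $e^{C_\epsilon\delta}\to\infty$. The sentence ``finally send $\epsilon\downarrow 0$ using $T_1^\epsilon\uparrow T_1$ together with continuity'' hides exactly this: continuity on $[t_0,T_1)$ gives $\sup_{[t_0,T_1)}D=\lim_{\epsilon\downarrow 0}\sup_{[t_0,T_1^\epsilon]}D$, but your upper bound for the latter diverges. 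Choosing $\delta=\delta(\epsilon)$ does not help either, since then you only control $[T_1-\delta(\epsilon),T_1^\epsilon]$, an interval that need not contain any fixed neighbourhood of $T_1$.

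What you are missing is that the singular contribution to $dD$ is \emph{exactly zero}, not merely $\le C_\epsilon D$. The approximate system \eqref{sds_tilde_phi} is precisely the singular part of \eqref{eqevlim} for $i=i^*,i^*-1$, driven by the \emph{same} noise $w_{i^*-1,i^*}^\beta$. Hence in $d\|\phi_{i^*}-\tilde\phi_{i^*}\|_2^2+d\|\phi_{i^*-1}-\tilde\phi_{i^*-1}\|_2^2$ the drift term $-\tfrac{p\beta}{(\lambda_{i^*}-\lambda_{i^*-1})^2}D\,dt$ coming from $2\langle\cdot,d\cdot\rangle$ is cancelled by the quadratic–variation drift $+\tfrac{p\beta}{(\lambda_{i^*}-\lambda_{i^*-1})^2}D\,dt$, and the singular martingale increments cancel by antisymmetry of the $i^*\leftrightarrow i^*-1$ coupling. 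The analogy with Proposition~\ref{firsttheo} is misleading here: there the point was convexity (equation~\eqref{astuce}), whereas here it is an algebraic cancellation specific to the pair $(i^*,i^*-1)$. Once you use it, the It\^o formula gives
\[
D(t)=D(t_0)-2\int_{t_0}^t\sum_{i\in\{i^*,i^*-1\}}\sum_{j\neq i^*,i^*-1}\frac{\sqrt{p}}{\lambda_i-\lambda_j}\,dw_{ij}^\beta\,\langle\tilde\phi_i,\phi_j\rangle,
\]
a martingale whose bracket is $O(\delta)$ uniformly in $\epsilon$ since only the non-degenerate gaps appear. Your steps (ii) and (iv) then finish the job without any Gronwall.
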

\begin{proof}
Using It\^o's formula, we find\footnote{Note that all the diverging terms in $T_1$ cancel in this expression.} for all $t\in [t_0;T_1)$,
\begin{align}\label{error_term_norm2_phi_tildephi}
||\phi_{i^*}(t)-\tilde{\phi}_{i^*}(t)||_2^2 &+ ||\phi_{i^*-1}(t)-\tilde{\phi}_{i^*-1}(t)||_2^2 = ||\phi_{i^*}(t_0)-u||_2^2 + ||\phi_{i^*-1}(t_0)-v||_2^2\notag \\
&-2 \int_{t_0}^t \sum_{i\in \{i^*,i^*-1\}} \sum_{j\neq i^*,i^*-1} \frac{\sqrt{p}}{(\lambda_{i}-\lambda_j)(s)} dw_{ij}^\beta(s) \langle \tilde{\phi}_{i}(s),\phi_j(s)\rangle\,.
\end{align}
 As for $i\in \{i^*,i^*-1\}$ and $j\not\in \{i^*,i^*-1\}$ the terms $1/(\lambda_i-\lambda_j)^2(t)$ have almost surely a finite integral with respect to 
Lebesgue measure on the interval $[t_0;T_1)$ (in fact those terms are almost surely bounded as the corresponding particles remain at finite distance), 
the quadratic variation of the last term is of order $\delta$ and therefore
is smaller than $\eta/2$ with probability greater that $1-\kappa$ for $\delta$ small enough.

It remains to check that we can choose $(u,v)$ an orthonormal basis of $W$ and $\delta>0$ such that 
\begin{equation}\label{approx_C_I}
||\phi_{i^*}(T_1-\delta)-u||_2^2 + ||\phi_{i^*-1}(T_1-\delta)-v||_2^2  \leq \eta/2\,.
\end{equation} 
This is a straightforward: Indeed we can approximate the $\phi_j(T_1-\delta)$ for $j\not\in \{i^*,i^*-1\}$
by the $\tilde{\phi}_j$ because of the first point of Proposition \ref{limit_phi_T_1}, thus we can choose two vectors $\{u,v\}$ in the two dimensional 
space $W$ so that \eqref{approx_C_I} holds. 
This completes the proof.
\end{proof}

We now turn to the study of the couple $(\tilde{\phi}_{i^*-1}(t),\tilde{\phi}_{i^*}(t))$ for $t\in [T_1-\delta;T_1)$ and in particular when $t\rightarrow T_1, t<T_1$. A crucial point is   equation \ref{div_integral} which we now prove.

It\^o's Formula gives for $t< T_1$
\begin{align*}
&\ln(\lambda_i-\lambda_{i-1})(t) = (- \gamma+ 2 p\beta) t + \int_0^{t} \sqrt{2}\frac{db_s^{i^*}-db_s^{i^*-1}}{(\lambda_{i^*}-\lambda_{i^*-1})(s)} \\
&- p\beta \int_0^{t} \sum_{j\neq i^*,i^*-1} 
\frac{ds}{(\lambda_{i^*}-\lambda_j)(\lambda_{i^*-1}-\lambda_j)(s)} - \int_0^{t} \frac{2\,ds}{(\lambda_{i^*}-\lambda_{i^*-1})^2(s)}\,.
\end{align*}
If we suppose that $\int_0^{T_1} dt/(\lambda_{i^*}-\lambda_{i^*-1})^2(t) < + \infty$ and since $T_1<\tau^3_\epsilon$ for some $\epsilon>0$ small enough, we obtain a contradiction letting $t\rightarrow T_1$: under this assumption, the right hand side 
tends to 
$-\infty$ whereas the left hand side is almost surely bounded in this limit.  
\qed

The next Lemma \ref{study_tilde_phi} shows that the orthonormal basis $(\tilde{\phi}_{i^*-1}(t),\tilde{\phi}_{i^*}(t))$ of the subspace $W$ is in fact uniformly distributed 
in the set of all orthonormal basis of $W$ in the limit $t\rightarrow T_1,t<T_1$.

As $W$ is two dimensional, up to a change basis, we can suppose that the two vectors $\tilde{\phi}_{i^*-1}(t)$ and $\tilde{\phi}_{i^*}(t)$ are 
two dimensional (we just study the evolution of their coordinates in an orthonormal basis of $W$). Let us define the two by two matrix $\tilde{\phi}(t)$ 
whose first line is the vector $\tilde{\phi}_{i^*}(t)$ and second line is the vector $\tilde{\phi}_{i^*-1}(t)$:
\begin{equation*}
\tilde{\phi}(t) := \begin{pmatrix}
\tilde{\phi}_{i^*}(t) \\
\tilde{\phi}_{i^*-1}(t)
\end{pmatrix}\,.
\end{equation*}

\begin{lemma}\label{study_tilde_phi}
The matrix $\tilde{\phi}(t)$ converges in law when $t\rightarrow T_1,t<T_1$ to the Haar probability measure on the orthogonal group (respectively unitary group if $\beta=2$.)
\end{lemma}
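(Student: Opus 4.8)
The plan is to reduce the convergence to a time–change argument. The matrix $\tilde\phi(t)$ is a $2\times 2$ orthogonal (resp. unitary) matrix satisfying the linear stochastic differential system \eqref{sds_tilde_phi}, driven by the single (complex if $\beta=2$) Brownian motion $w^\beta_{i^*-1,i^*}$, with all coefficients proportional to powers of $(\lambda_{i^*}-\lambda_{i^*-1})(t)^{-1}$. First I would introduce the deterministic clock
\begin{equation*}
\sigma(t) := p\int_{T_1-\delta}^{t} \frac{ds}{(\lambda_{i^*}-\lambda_{i^*-1})^2(s)}\,,
\end{equation*}
which is almost surely finite and strictly increasing for $t<T_1$, and which tends to $+\infty$ as $t\uparrow T_1$ by Proposition \ref{first_coll_singularity} (i.e. equation \eqref{div_integral}, proved just above). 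Conditionally on the eigenvalue process, $\sigma$ is a deterministic, strictly increasing, continuous bijection from $[T_1-\delta,T_1)$ onto $[0,\infty)$; denote its inverse by $\sigma^{-1}$.

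Next I would perform the Dambis–Dubins–Schwarz time change. Define $\hat w(u) := \beta^{-1/2}p^{-1/2}\big(w^\beta_{i^*-1,i^*}(\sigma^{-1}(u))-w^\beta_{i^*-1,i^*}(T_1-\delta)\big)$ rescaled so that its quadratic variation becomes $u$ (using that the quadratic variation of $\int_{T_1-\delta}^{t}(\lambda_{i^*}-\lambda_{i^*-1})(s)^{-1}dw^\beta_{i^*-1,i^*}(s)$ is exactly $\beta p^{-1}\sigma(t)/p$, up to the normalisation of $w^\beta$); then $\hat w$ is a standard (complex if $\beta=2$) Brownian motion. Setting $\Psi(u):=\tilde\phi(\sigma^{-1}(u))$, It\^o's formula applied to \eqref{sds_tilde_phi} shows that $\Psi$ solves an autonomous linear SDE with constant coefficients driven by $\hat w$, namely (in the $\beta=1$ case) the equation generating Brownian motion on $O(2)$, and in the $\beta=2$ case Brownian motion on $U(2)$ — precisely the left-invariant diffusion whose generator is the Laplace–Beltrami operator on the compact group, as in \cite[Lemma 4.3.4]{AGZ}. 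This identification is the crux: the drift $-\tfrac{p\beta}{2}(\lambda_{i^*}-\lambda_{i^*-1})^{-2}dt$ becomes exactly the Stratonovich-to-It\^o correction $-\tfrac{\beta}{2}\Psi\,du$ needed for $\Psi(u)$ to stay on the group and to be the Brownian motion there.

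Finally, since $\Psi$ is Brownian motion on the compact group $O(2)$ (resp. $U(2)$), started from the fixed point $\Psi(0)=\begin{pmatrix}v\\u\end{pmatrix}$, its law at time $u$ converges to the Haar probability measure as $u\to\infty$: this is the standard fact that the heat kernel on a compact Lie group converges to the uniform measure (equivalently, the spectral gap of the Laplacian gives exponential mixing, and Haar measure is the unique invariant measure). Because $\sigma(t)\to\infty$ as $t\uparrow T_1$, the law of $\tilde\phi(t)=\Psi(\sigma(t))$ converges weakly to Haar measure on $O(2)$ (resp. $U(2)$) as $t\uparrow T_1$.

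The main obstacle I anticipate is bookkeeping the $\beta=2$ case carefully: checking that the complex driving Brownian motion $w^\beta_{i^*-1,i^*}$, with its quadratic-variation structure $\langle w^\beta,\bar w^\beta\rangle$, produces after the time change exactly the standard Brownian motion on $U(2)$ rather than on some subgroup or with a wrong speed — in particular verifying that the It\^o correction term $-\tfrac{p\beta}{2}(\lambda_{i^*}-\lambda_{i^*-1})^{-2}$ in \eqref{sds_tilde_phi} matches the drift of the heat semigroup on $U(2)$ with the correct normalisation. A secondary point to handle cleanly is that everything is conditional on the eigenvalue trajectory, so one should first condition on $(\lambda_1,\dots,\lambda_d)_{[0,T_1]}$ (making $\sigma$ and $\sigma^{-1}$ deterministic), establish the convergence conditionally, and then note the limit (Haar measure) does not depend on the conditioning, so the unconditional weak convergence follows.
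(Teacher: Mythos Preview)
Your approach is correct and essentially identical to the paper's: both perform the time change by the clock $\int (\lambda_{i^*}-\lambda_{i^*-1})^{-2}\,ds$ (which diverges at $T_1$ by \eqref{div_integral}), identify the time-changed process as an autonomous diffusion on the $2\times 2$ orthogonal/unitary matrices, and conclude by convergence to Haar measure. The only difference is cosmetic: for $\beta=1$ the paper solves the time-changed SDE explicitly as $\tilde\psi(t)=\exp(\sqrt{p}\,A\,B_t)\tilde\psi(0)$ with $A=\begin{pmatrix}0&1\\-1&0\end{pmatrix}$, i.e.\ a rotation by the random angle $\sqrt{p}B_t$, and reads off the Haar limit directly from $\sqrt{p}B_t\bmod 2\pi\to$ uniform, instead of invoking the general heat-kernel fact you cite; and, like you, the paper only treats $\beta=1$ in detail.
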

\begin{proof} 

To simplify notations, we do the proof in the case $\beta=1$.

Set $t_0:= T_1-\delta$ and define for $t\in[0;\delta)$ the function 
\begin{equation*}
\varphi(t):= \int_{t_0}^{t_0+t} \frac{ds}{(\lambda_{i^*}-\lambda_{i^*-1})^2(s)}
\end{equation*}
and denote by $\varphi^{-1}$ its functional inverse. We now proceed to a change of time by setting for $t\in[0;\delta)$
\begin{align*}
\tilde{\psi}_{i^*}(t) = \tilde{\phi}_{i^*}(\varphi^{-1}(t)) , \quad \tilde{\psi}_{i^*-1}(t) = \tilde{\phi}_{i^*-1}(\varphi^{-1}(t))\,.
\end{align*}
As $\varphi^{-1}(t) \rightarrow + \infty$ when $t\rightarrow \delta, t<\delta$ (because of \eqref{div_integral}), 
the two by two matrix $\tilde{\psi}(t)$ whose first line is $\tilde{\psi}_{i^*}(t)$ and second line is $\tilde{\psi}_{i^*}(t)$:
\begin{equation*}
\tilde{\psi}(t) := \begin{pmatrix}
\tilde{\psi}_{i^*}(t) \\
\tilde{\psi}_{i^*-1}(t)
\end{pmatrix}
\end{equation*}
is now defined for all $t\in \R_+$ and verifies the following stochastic differential equation
\begin{equation}\label{tilde_psi}
d\tilde{\psi}(t) = \sqrt{p}\, A \, \tilde{\psi}(t)\, dB_t - \frac{p\beta}{2} \,\tilde{\psi}(t) \, dt\,.
\end{equation}
where $B$ is a standard Brownian motion on $\R$ and 
where $A$ is the two by two matrix defined by
\begin{equation*}
A=\begin{pmatrix}
0 & 1 \\
-1 & 0
\end{pmatrix}\,.
\end{equation*}
Note in particular that $A^2=-I$. 

It is clear that there is pathwise uniqueness in the stochastic differential equation \eqref{tilde_psi} (it is linear in 
$\tilde{\psi}$). Therefore to solve entirely this equation, we just need to exhibit one solution. Using It\^o's Formula, one can check that the solution is 
\begin{align*}
\tilde{\psi}(t) &=  \exp\left(\sqrt{p}\, A \, B_t \right)\tilde{\psi}(0) \\
&=  \begin{pmatrix}
\cos(\sqrt{p}B_t) & \sin(\sqrt{p}B_t) \\
-\sin(\sqrt{p}B_t) & \cos(\sqrt{p}B_t)
\end{pmatrix}\tilde{\psi}(0)\,.
\end{align*}
Note that for all $t\in \R_+$, the matrix $\tilde{\psi}(t)$ is indeed in the space of orthogonal matrices. 

But $(\cos(\sqrt{p}B_t), \sin(\sqrt{p}B_t))$ converges in law as time goes to infinity towards the 
law of $(\theta,\varepsilon \sqrt{1-\theta^2})$ with $\theta$ uniformly distributed on $[-1,1]$ and $\varepsilon=\pm 1$ with probability $1/2$, from which the result follows.
\end{proof}

Lemmas \ref{comp_phi_phi_tilde} and \ref{study_tilde_phi} give the second statement of Proposition \ref{limit_phi_T_1}.

\section*{Acknowledgments}
We thank J.-P. Bouchaud for proposing this subject, and working with us to understand it,  which  led to \cite{PRL}; the present article gives complete proofs of some of the results  stated in \cite{PRL}. We are very grateful to C. Garban who proposed us to attack the analysis of the case $p\beta<1$ by introducing the auxiliary process $\lambda^\delta$ of section \ref{auxiliaryprocesssec} and for many enlightening discussions. We thank  L. Dumaz, R. Rhodes, V. Vargas, G . Schehr, R. Chicheportiche and F. Benaych-Georges for useful comments and discussions.

\end{document}